\newcommand{\be}{\begin{equation}}
\newcommand{\ee}{\end{equation}}
\newtheorem{theorem}{Theorem}[section]
\newtheorem{definition}[theorem]{Definition}
\newtheorem{lemma}[theorem]{Lemma}
\newenvironment{proof}{
\noindent {\it Proof.}}{\hfill$\Box$ }
\begin{document}

\title{Eigenvectors of graph Laplacians: a landscape}

\author{ Jean-Guy CAPUTO and Arnaud KNIPPEL }

\maketitle

{\normalsize \noindent Laboratoire de Math\'ematiques, INSA de Rouen Normandie,\\
Normandie Universit\'e \\
76801 Saint-Etienne du Rouvray, France\\
E-mail: caputo@insa-rouen.fr, arnaud.knippel@insa-rouen.fr \\
}

\date{\ }


\begin{abstract}

We review the properties of eigenvectors for the graph Laplacian matrix,
aiming at predicting a specific eigenvalue/vector from the geometry of the graph.
After considering classical graphs for which the spectrum is known, 
we focus on eigenvectors that have zero components and 
extend the pioneering results of Merris (1998) on graph transformations
that preserve a given eigenvalue $\lambda$ or shift it in a simple way.
These transformations enable us to obtain eigenvalues/vectors 
combinatorially instead of numerically; in particular we show 
that graphs having eigenvalues $\lambda= 1,2,\dots,6$ up to six vertices 
can be obtained from a short list of graphs. 
For the converse problem of a $\lambda$ subgraph $G$ of a $\lambda$ graph $G"$,
we prove results and conjecture that $G$ and $G"$ 
are connected by two of the simple transformations described above.

\end{abstract}

\section{Introduction}

The graph Laplacian is an important operator for both theoretical
reasons and applications \cite{crs01}. As its continuous counterpart, it arises 
naturally from conservation laws and has many applications in physics and engineering.
The graph Laplacian has real eigenvalues and eigenvectors can be chosen orthogonal.
This gives rise to a Fourier like description of evolution problems
on graphs; an example is the graph wave equation, a natural model for weak 
miscible flows on a network, see the articles \cite{maas}, \cite{cks13}. 
This simple formalism proved very useful for modeling the electrical grid \cite{ckr19}
or describing an epidemic on a geographical network \cite{ckmk19}. 
Finally, a different application of graph Laplacians is 
spectral clustering in data science, see the review \cite{vonluxburg}.

Almost sixty years ago, Mark Kac \cite{kac} asked the question :
can one Hear the Shape of a Drum? Otherwise said, does the 
spectrum of the Laplacian characterize the graph completely ?
We know now that there are isospectral graphs so that there is
no unique characterization. However, one can ask a simpler question:  
can one predict eigenvalues or eigenvectors from the geometry of the graph? 
From the literature, this seems very difficult, most of the results
are inequalities, see for example the beautiful review by 
Mohar \cite{mohar92} and the extensive monograph \cite{bls07}.

Many of the results shown by Mohar \cite{mohar92} are inequalities
on $\lambda_2$, the first non zero eigenvalue. This eigenvalue is related
to the important maximum cut problem in graph theory and also others.
Mohar \cite{mohar92} also gives some inequalities on $\lambda_n$,
the maximum eigenvalue, in terms of the maximum of the sum of two degrees.
Another important inequality concerns the interlacing of the spectra of
two graphs with same vertices, differing only by an edge.
However, little is known about the bulk of the spectrum, i.e. the
eigenvalues between $\lambda_2$ and $\lambda_n$.
A very important step in that direction was 
Merris's pioneering article \cite{merris} where he introduced
"Laplacian eigenvector principles" that allow to predict how the
spectrum of a graph is affected by contracting, adding or deleting edges 
and/or of coalescing vertices. Also, Das \cite{das04} showed 
that connecting an additional vertex to all vertices of a graph increases
all eigenvalues (except 0) by one.

Following these studies, in \cite{ckk18a} we characterized graphs
which possess eigenvectors of components $\pm 1$ (bivalent) and 
$0,\pm 1$ (trivalent).
This is novel because we give exact results, not inequalities.
Here, we continue on this direction and focus on eigenvectors that have
some zero coordinates, we term these soft nodes;  
such soft nodes are important because there, no action can 
be effected on the associated mechanical system \cite{cks13}. 
In this article, we use the important properties of graphs with
soft nodes, we call these soft-graphs, to highlight 
eigenvalues/eigenvectors that can be obtained
combinatorially (instead of numerically).
We first show that eigenvalues of graph Laplacians with
weights one are integers or irrationals. Then we present well known
classical graphs whose spectrum is known exactly. 
We describe five graph transformations that preserve
a given eigenvalue and two that shift the eigenvalue in a simple way.
Among the transformations that preserve an eigenvalue, 
the link was explicitly introduced in the
remarkable article by Merris ({\it link principle})
\cite{merris}. The articulation and the soldering were contained 
in the same paper and we choose to present 
elementary versions of these transformations.
We find two new transformations that preserve an eigenvalue: the 
regular expansion and the replacement of 
a coupling by a square. We also present transformations that
shift an eigenvalue in a predictable way: insertion of a soft node,
addition of a soft node, insertion of a matching. The 
first is new, the second and third were found by Das \cite{das04} 
and Merris \cite{merris} respectively.

In the last part of the article we enumerate all the small
graphs up to six vertices that have a given eigenvalue $\lambda$
and explain the relations between them using the transformations
discussed previously. It is remarkable that these
graphs can all be obtained from a short list of graphs.
However the question is open for bigger graphs. 
Using the transformations mentioned above, $\lambda$ soft graphs can
be made arbitrarily large. The converse problem of a $\lambda$ 
subgraph $G$ of a $\lambda$ graph $G"$ is considered. We show 
that the matrix coupling the two Laplacians $L(G)$ and $L(G')$,
where $G'= G"-G$, is a graph Laplacian.
If the remainder graph $G'$ is $\lambda$, then it is formed using the
articulation or link transformation. It is possible that the 
remainder graph $G'$ is not $\lambda$ as long as it 
shares an eigenvector with $G$.
Then the two may be related by adding one or several soft nodes 
to $G'$. Finally, an
argument shows that if $G'$ is not $\lambda$ and does not share
an eigenvector with $G$, the problem has no solution.
We finish the article by examining the $\lambda$ soft graphs for
$\lambda=1,2,\dots,6$ and insist on minimal $\lambda$ soft graphs
as generators of these families, using the transformations above. \\
The article is organized as follows. Section 2 introduces the main
definitions. In section 3 we consider special graphs (chains, cycles, cliques,
bipartite graphs) whose Laplacian spectrum is
well known.  The graph transformations preserving an eigenvalue are
presented in section 4. Section 5 introduces graph transformations 
which shift eigenvalues. Finally section 6 introduces $\lambda$ soft graphs,
discusses $\lambda$ sub-graphs
and presents a classification of graphs up to six vertices. 

\section{The graph Laplacian : notation, definitions and properties}
We consider a graph ${G}({V},{E})$ with a vertex set ${V}$ of cardinality 
$n$ and edge set ${E}$ of cardinal $m$ where $n,m$ are finite.
The graph is assumed connected with no loops and no multiple edges.
The graph Laplacian matrix \cite{bls07} is the $(n,n)$ matrix $L(G)$ 
or $L$ such that
\be \label{laplacian} L_{ij}=-1 ~{\rm if~edge~i~j~exists}, 0 ~{\rm otherwise},
~~~~L_{ii}=m_i,~ 
{\rm degree~ of~ i} ,\ee
where the degree of $i$ is the number of edges connected to vertex $i$.

The matrix $L$ is symmetric so that it has real eigenvalues and 
we can always find a basis of orthogonal eigenvectors. 
Specifically we arrange the eigenvalues $\lambda_i$ as
\be\label{eigenvalues}
\lambda_1 = 0 \le \lambda_2 \le \dots \le \lambda_n.\ee
We label the associated eigenvectors $v^1,v^2,\dots,v^n$.

We have the following properties 
\begin{itemize}
\item $v^1={\mathbf{1}}$ the vector whose all components are $1$. 
\item Let $v^i_k$ be the $k$ component of an eigenvector $v^i,~~i>1$.
An immediate consequence of the $v^i$ being orthogonal to $v^1$ is
$\sum_k v^i_k =0$. 
\end{itemize}
A number of the results we present hold when $L_{ij} \neq -1$ 
and $L_{ii} = \sum_{j \sim i}L_{ij}$ , this is the generalized
Laplacian.
We will indicate which as we present them.

{\bf Regular graphs}\\
The graph Laplacian can be written as
$$L = D - A$$
where $A$ is the adjacency matrix and $D$ is the diagonal matrix of
the degrees. \\
We recall the definition of a regular graph.
\begin{definition}[Regular graph]
A graph is $d$-regular if every vertex has the same degree $d$.
\end{definition}
For regular graphs $D = d {\rm Id}_n$, where ${\rm Id}_n$ is the identity
matrix of order $n$. For these graphs, all the properties obtained
for $L$ in the present article carry over to $A$.

We will use the following definitions.
\begin{definition}[{\rm Soft node }]
\label{def1}
A vertex $s$ of a graph is a soft node for an eigenvalue $\lambda$
of the graph Laplacian if there exists an eigenvector $x$ for
this eigenvalue such that $x_s=0$.
\end{definition}
An important result due to Merris \cite{merris} is
\begin{theorem}
Let $G$ be a graph with $n$ vertices. If $0 \neq \lambda < n$
is an eigenvalue of $L(G)$ then any eigenvector affording $\lambda$
has component $0$ on every vertex of degree $n-1$.
\end{theorem}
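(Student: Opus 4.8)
The plan is to work directly from the definition of the Laplacian and exploit the structure a vertex of degree $n-1$ imposes on $L(G)$. Suppose $v$ is a vertex of degree $n-1$, so $v$ is adjacent to every other vertex. Let $x$ be an eigenvector for eigenvalue $\lambda$, and write the eigenvalue equation $L(G)x = \lambda x$ row by row. The key observation is that for the row indexed by $v$ we have $L_{vv} = n-1$ and $L_{vj} = -1$ for every $j \neq v$, so that row reads $(n-1)x_v - \sum_{j \neq v} x_j = \lambda x_v$.

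The next step is to bring in the orthogonality relation $\sum_k x_k = 0$ from the properties listed above (valid since $\lambda \neq 0$, so $x \perp v^1 = \mathbf{1}$). This gives $\sum_{j \neq v} x_j = -x_v$. Substituting into the row equation for $v$ yields $(n-1)x_v + x_v = \lambda x_v$, i.e. $n x_v = \lambda x_v$, hence $(n - \lambda)x_v = 0$. Since by hypothesis $\lambda < n$, the factor $n - \lambda$ is nonzero, and therefore $x_v = 0$. Because $x$ was an arbitrary eigenvector affording $\lambda$ and $v$ an arbitrary vertex of degree $n-1$, the conclusion follows.

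There is essentially no obstacle here; the whole argument is a two-line manipulation of the eigenvalue equation together with the $\perp \mathbf{1}$ condition. The only point requiring a word of care is making sure the orthogonality relation is legitimately available, which it is precisely because we assumed $\lambda \neq 0$; if $\lambda = 0$ the eigenvector is a multiple of $\mathbf{1}$ and the statement would fail, which is why that case is excluded in the hypothesis. One might also remark that the argument uses only $L_{vj} = -1$ for $j \sim v$ and the row-sum-zero property of $L$, so nothing beyond the standard (unweighted) Laplacian is needed, though a generalized-Laplacian version would require the off-diagonal entries in row $v$ to be constant.
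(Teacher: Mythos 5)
Your proof is correct and complete: the row of $L(G)x=\lambda x$ at a vertex of degree $n-1$, combined with $\sum_k x_k=0$ (which you rightly justify from $\lambda\neq 0$ and orthogonality to $\mathbf{1}$), gives $(n-\lambda)x_v=0$, and $\lambda<n$ forces $x_v=0$. The paper states this result without proof, citing Merris, and your argument is exactly the standard one; your closing remark about when a generalized Laplacian would still work is a correct bonus observation.
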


\begin{definition}[$k$-partite graph]
A $k$-partite graph is a graph whose vertices can be partitioned into $k$ different independent sets
so that no two vertices within the same set are adjacent.
\end{definition}

\begin{definition}[cycle]
A {cycle} is a connected graph where all vertices have degree 2. 
\end{definition}
\begin{definition}[chain]
A {chain} is a connected graph where two vertices have degree 1 and
the other vertices have degree 2.
\end{definition}
\begin{definition}[clique]
A {clique} or complete graph $K_n$ is a simple graph where
every two vertices are connected. 
\end{definition}

In the article we sometimes call configuration a vertex valued graph
where the values correspond to an eigenvector of the graph Laplacian.

\subsection{Eigenvalues are integers or irrationals}

We have the following result
\begin{theorem}
If the eigenvalue $\lambda$ is an integer, then there exist 
integer eigenvectors. 
\end{theorem}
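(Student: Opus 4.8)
The plan is to exploit that, when all edge weights equal one, $L(G)$ is a matrix with integer entries, so for $\lambda \in \mathbb{Z}$ the matrix $M := L(G) - \lambda\,\mathrm{Id}_n$ also has integer entries. Since $\lambda$ is an eigenvalue, $\det M = 0$, so $M$ is singular and $\ker M \neq \{0\}$; an eigenvector of $L(G)$ affording $\lambda$ is exactly a nonzero element of $\ker M$. It therefore suffices to show that $\ker M$ contains a nonzero integer vector.

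First I would show that $\ker M$ has a basis of rational vectors. Solving the homogeneous system $Mx=0$ by Gauss--Jordan elimination uses only the field operations $+,-,\times,\div$ applied to the entries of $M$, which are integers; hence the reduced row echelon form of $M$ has rational entries, and expressing the pivot variables in terms of the free ones yields a basis of $\ker M$ whose vectors lie in $\mathbb{Q}^n$. Choosing any nonzero such vector $w$ and letting $q$ be a common multiple of the denominators of its components, the vector $qw$ has integer components, is nonzero, and still belongs to the linear subspace $\ker M$; it is thus the desired integer eigenvector.

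A shorter, determinant-based variant is worth recording: the adjugate $\mathrm{adj}(M)$ has integer entries (each is $\pm$ an $(n-1)\times(n-1)$ minor of $M$), and $M\,\mathrm{adj}(M)=\det(M)\,\mathrm{Id}_n=0$, so every nonzero column of $\mathrm{adj}(M)$ is an integer eigenvector. This works directly when $\mathrm{rank}\,M=n-1$; when the geometric multiplicity of $\lambda$ is larger one has $\mathrm{adj}(M)=0$, and one must instead border a maximal nonvanishing $r\times r$ minor of $M$ by one more row and column and take the resulting vector of signed $r\times r$ minors, which again has integer entries and lies in $\ker M$ by Cramer's rule.

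I do not expect a serious obstacle here: the statement is essentially the elementary fact that a singular matrix with entries in $\mathbb{Q}$ has a nonzero kernel vector in $\mathbb{Q}^n$, together with clearing denominators. The only point needing a little care is the higher-multiplicity case in the adjugate argument; the elimination argument of the second paragraph sidesteps this and handles all multiplicities uniformly, so I would present that as the main proof and mention the adjugate version as a remark.
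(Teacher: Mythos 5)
Your proof is correct and follows essentially the same route as the paper: solve $(L-\lambda I)X=0$ by Gaussian elimination over $\mathbb{Q}$ to get a rational kernel vector, then clear denominators. Your version is somewhat more carefully stated (and the adjugate remark is a nice addition), but the underlying argument is the same.
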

To see this consider the linear system
$$ (L - \lambda I) X = 0 . $$
It can be solved using Gauss's elimination. This involves algebraic
manipulations so that the result $X$ is rational. If $X$ is rational, then
multiplying by the product of the denominators of the entries, we obtain
an eigenvector with integer entries.

We now show that the eigenvalues of a graph Laplacian are
either integers or irrationals. 
We have the following rational root lemma on the roots of 
polynomials with integer coefficients, see for example
\cite{rational}
\begin{lemma} Rational root \\
Consider the polynomial equation
 $$a_n x^n + a_{n-1} x^{n-1} + \dots +a_0=0$$
where the coefficients $a_i$ are integers. Then, any rational
solution $x=p/q$, where $p,q$ are relatively prime is such that
$p$ divides $a_0$ and $q$ divides $a_n$ .
\end{lemma}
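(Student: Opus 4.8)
The plan is to substitute the hypothesized rational root $x = p/q$ directly into the polynomial equation and clear denominators, reducing the statement to a divisibility argument over the integers. Starting from $a_n (p/q)^n + a_{n-1}(p/q)^{n-1} + \dots + a_0 = 0$ and multiplying through by $q^n$, I would obtain the integer relation
\be
a_n p^n + a_{n-1} p^{n-1} q + \dots + a_1 p q^{n-1} + a_0 q^n = 0 .
\ee
Every term in this sum is now an integer, which is the form I want to exploit.

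To prove that $p$ divides $a_0$, I would rearrange this identity to isolate the single term lacking a factor of $p$, namely $a_0 q^n = -\left( a_n p^n + a_{n-1} p^{n-1} q + \dots + a_1 p q^{n-1} \right)$. The right-hand side is divisible by $p$ since each summand contains at least one factor of $p$, so $p \mid a_0 q^n$. Symmetrically, to prove that $q$ divides $a_n$, I would isolate the term lacking a factor of $q$, writing $a_n p^n = -\left( a_{n-1} p^{n-1} q + \dots + a_0 q^n \right)$, whose right-hand side is divisible by $q$, so $q \mid a_n p^n$.

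The remaining step, and the only genuinely non-cosmetic ingredient, is to pass from $p \mid a_0 q^n$ to $p \mid a_0$ (and from $q \mid a_n p^n$ to $q \mid a_n$). This is where the coprimality hypothesis $\gcd(p,q)=1$ enters: by Euclid's lemma (equivalently, unique factorization in $\mathbb{Z}$), $\gcd(p,q)=1$ forces $\gcd(p, q^n)=1$, and a divisor of a product that is coprime to one factor must divide the other, yielding $p \mid a_0$; the argument for $q \mid a_n$ is identical with the roles of $p$ and $q$ exchanged. I do not anticipate a serious obstacle here, as the result is elementary; the one point requiring care is to invoke coprimality of $p$ with the full power $q^n$ rather than merely with $q$, which is precisely what justifies the final divisibility conclusions.
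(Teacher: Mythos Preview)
Your proof is correct and complete; it is the standard argument for the rational root theorem. The paper itself does not supply a proof of this lemma at all---it merely states the result and cites a reference---so there is nothing to compare against, and your write-up in fact fills in what the paper omits.
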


A consequence of this is 
\begin{theorem}
\label{eintirra}
The eigenvalues of a graph Laplacian are either integers or irrationals.
\end{theorem}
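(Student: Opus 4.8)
The plan is to apply the rational root lemma to the characteristic polynomial of the Laplacian. First I would observe that since $L(G)$ is an integer matrix, its characteristic polynomial $p(x) = \det(L - xI)$ has integer coefficients; moreover, expanding the determinant shows that the leading term is $(-1)^n x^n$, so up to an overall sign the leading coefficient $a_n$ equals $\pm 1$. The eigenvalues of $L(G)$ are precisely the roots of $p(x) = 0$.

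Next I would invoke the rational root lemma: if $\lambda = p/q$ is a rational eigenvalue written in lowest terms with $\gcd(p,q) = 1$, then $q$ must divide the leading coefficient $a_n = \pm 1$. Hence $q = \pm 1$, which forces $\lambda = \pm p$ to be an integer. Contrapositively, any eigenvalue that is not an integer cannot be rational, i.e.\ it is irrational. This dichotomy is exactly the statement of Theorem~\ref{eintirra}.

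The argument is essentially immediate once the two ingredients are in place, so there is no real obstacle; the only point requiring a moment's care is the verification that the characteristic polynomial is monic up to sign (equivalently, that $a_n = \pm 1$), which follows from the cofactor expansion of $\det(L - xI)$ where only the product of diagonal entries $\prod_i (m_i - x)$ contributes a term of degree $n$. I would also remark that the same reasoning applies verbatim to the adjacency matrix $A$ of any graph, and more generally to any integer symmetric matrix, so the result is not special to the Laplacian beyond the fact that its entries are integers.
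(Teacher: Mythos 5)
Your proposal is correct and follows essentially the same route as the paper: apply the rational root lemma to the characteristic polynomial of $L$, observe that its leading coefficient is $\pm 1$, and conclude that any rational eigenvalue must have denominator $1$. Your explicit justification that the degree-$n$ term comes only from the product of diagonal entries is a small but welcome addition that the paper leaves implicit.
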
	

\begin{proof}
Consider the equation associated to the characteristic 
polynomial associated to the graph Laplacian, it has the form
$$a_n x^n + a_{n-1} x^{n-1} + \dots +a_1 x, $$
because the graph is connected so that there is only one $0$
eigenvalue. 
Assume that the eigenvalue is of the form $x=p/q$ with $p,q$ are relatively prime integers. Then from the lemma above, $p$ divides
$a_0$ and $q$ divides $a_n$. Since $a_n= \pm 1$, $q=1$
so that $x=p$ is an integer.  
\end{proof}

The fact that some graphs have integer spectrum was discussed
by Grone and Merris \cite{gm94}. Many of their results are inequalities
for $\lambda_2$ and $\lambda_{n-1}$. Our results complement their
approach.

\section{Special graphs}

\subsection{Cliques and stars}

The clique $K_n$ has
eigenvalue $n$ with multiplicity $n-1$  and eigenvalue $0$.
The eigenvectors for eigenvalue $n$  can be
chosen as $v^k = e^1 -e^k,~~ k=2,\dots,n$. To see this note that
$$ L = n I_n - \mathbf{1}, $$
where $I_n$ is the identity matrix of order $n$
and $\mathbf{1}$ is the $(n,n)$ matrix where all elements are $1$.

A star of $n$ vertices $S_n$ is a tree such that one vertex , say vertex 1,
is connected to all the others.
For a star $S_n$, the eigenvalues and eigenvectors are
\begin{itemize}
\item $\lambda = 1$ multiplicity $n-2$ , eigenvector $e^2-e^k,~~k=3,\dots,n$
\item $\lambda = n$ multiplicity $1$ , eigenvector $(n+1)e^1 -\sum_{k=2}^n e^k$
\item $\lambda = 0$ multiplicity $1$ , eigenvector ${\hat 1}$
\end{itemize}

\subsection{Bipartite and multipartite graphs}

Consider a bipartite graph $K_{n_1,n_2}$.
The Laplacian is
\be\label{bipart}
L = 
\begin{pmatrix} 
n_2 & 0   &\dots &0     & -1 &\dots  &  & -1 \\ 
0   & n_2 & 0    &\dots & -1 &\dots  &  & -1   \\ 
\dots & \dots & \dots    &\dots & \dots &\dots &  & \\ 
0   & \dots & 0    & n_2 & -1 &\dots &   &  -1   \\ 
 -1 &\dots  &  & -1   & n_1 & 0  &\dots & 0 \\
 -1 &\dots  &  & -1   & 0 & n_1  &\dots & 0 \\
\dots   &  &     & & & \dots &\dots & \dots   \\ 
 -1 &\dots  &  & -1   & 0 & 0  &\dots & n_1
 \end{pmatrix} ,
\ee
where the top left bloc has size $n_1 \times n_1$, and the
bottom right bloc $n_2 \times n_2$.
The eigenvalues with their multiplicities denoted as exponents are 
$$ 0^1,~~ n_1^{n_2-1},~~ n_2^{n_1-1},~~ (n1+n2)^1  .$$
Eigenvectors for $n_1$ can be chosen as 
$e^{n_1+1}-e^i~~(i=n_1+2,\dots,n_1+n_2)$.
The eigenvector for $n=n_1+n_2$ is
$(1/n_1,\dots,1/n_1,-1/n_2,\dots,-1/n_2)^T$.

Similarly, the spectrum of a multipartite graph $K_{n_1,n_2,\dots n_p}$
is
$$ 0^1,~~ (n-n_1)^{n_1-1}, ~~ (n-n_2)^{n_2-1},\dots,~~(n-n_p)^{n_p-1},~~n^p.$$
The eigenvectors associated to $n-n_1$ are composed of $1$ and $-1$ in
two vertices of part 1 padded with zeros for the rest.

\subsection{Cycles}

For a cycle, the Laplacian is a circulant matrix, therefore its spectrum
is well-known. The eigenvalues are
\be\label{val_cyc}
\mu_k = 4 \sin^2 \left [ {(k-1) \pi \over n}  \right  ] ,~~k=1, \dots,n . \ee
They are associated to the complex eigenvectors $v^k$ whose components are 
\be\label{vec_cyc} v_j^k = \exp{ \left [ i(j-1)(k-1) 2 \pi \over n \right ] }~~ ,j=1, \dots n  .\ee 
The real eigenvectors $ w^k, ~ x^k$ are,
\begin{eqnarray}
 w^k =(0,~\sin (a_k),~ \sin (2 a_k),~  \dots,~ \sin ((n-1) a_k) )^T ,\\
 x^k =(1,~ \cos (a_k),~ \cos (2 a_k),~  \dots,~ \cos ((n-1) a_k) )^T ,\\
a_k = {2(k-1)  \pi \over n}
\end{eqnarray}
Ordering the eigenvalues, we have
\begin{eqnarray}
\lambda_1=\mu_1=0, \\
\lambda_2=\lambda_3=\mu_2, \\
\lambda_{2k}=\lambda_{2k+1}=\mu_{k+1},\\
\dots
\end{eqnarray}
For $n=2p+1$
$$\lambda_{2p}=\lambda_{2p+1}=\mu_{p+1}$$
For $n=2p$
$$\lambda_{2p}=\mu_{p}=4$$
is an eigenvalue of multiplicity 1; an eigenvector
is $(1,-1,\dots, 1,-1)^T$.
In all other cases, the eigenvalues have multiplicity two
so that all vertices are soft nodes.

Remark that the maximum number of 0s is $n/2$. To see this, note that
if two adjacent vertices 
have value 0 then their neighbors in the cycle must have 0 as well 
and we only have 0s
, but the null vector is not an eigenvector. 
This means that we have 
at most $n/2$ 0s.  This bound is reached for $n$ even.

\subsection{Chains}

For chains $C_n$, there are only single eigenvalues, they are 
\cite{edwards13} 
\be\label{vp_chemin}
\lambda_k = 4 \sin^2 ( {\pi (k-1) \over 2 n}) ~~ , k=1,\dots,n . \ee
The eigenvector $v^k$ has components
\be\label{vec_chemin}
v_j^k = \cos{ \left ( {\pi (k-1) \over n} (j-{1 \over 2})  \right ) }~~ ,j=1, \dots n  .\ee
Obviously the cosine is zero if and only if:
\be\label{cos0}(k-1)(2j-1) = n(1+2m),\ee
where $m$ is an integer.
There is no solution for $n=2^\alpha$, for $\alpha$ a positive integer.
Apart from this case, there is always at least one soft node. If
$n$ is a prime number, the middle vertex $j = (n+1)/2$ is the
only soft node. For $k$ odd, all vertices $j$ such that $2j-1$ divides $n$
have a zero value, including the middle vertex.


For $n$ odd, chains and cycles share $(n-1)/2$ eigenvalues
and eigenvectors. To see this consider a chain with $n=2p +1$.
All $k= 2 q +1 $ give a chain eigenvalue 
$\lambda_k =  4 \sin^2 ( {\pi q \over  2 p +1 })$
that is also a cycle eigenvalue. 
The eigenvector components $v_j^q$ are such that
$v_1^q = v_{2 p +1}^q$.

\section{Transformations preserving eigenvalues}

In this section, we present 
four main transformations of graphs such that one eigenvalue
is preserved. These are the link between two vertices, 
the articulation, the soldering and the contraction/expansion. 
The first three transformations are in the literature in a 
general form; we choose to present them in their most elementary
form. \\
Furthermore, these transformations will all be unary, 
they act on a single graph. Binary transformations can 
be reduced to unary transformations for non connected graphs.

Using these transformations
we can generate new graphs that have a soft node, starting from minimal graphs
having soft nodes. 

\subsection{Link between two equal vertices}

An important theorem due to Merris \cite{merris} connects  equal component vertices.
\begin{theorem}
{\bf Link} between two vertices : Let $\lambda$ be an eigenvalue of $L(G)$ for 
an eigenvector $x$.
If $x_i =x_j$ then $\lambda$ is an eigenvalue of $L(G')$ for $x$ where 
the graph $G'$ is obtained
from $G$ by deleting or adding the edge $e=ij$.
\end{theorem}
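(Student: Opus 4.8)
The plan is to verify the eigenvector equation componentwise, exploiting the fact that the Laplacian changes in a very controlled way when a single edge $e=ij$ is toggled. Write $L=L(G)$ and $L'=L(G')$, where $G'$ differs from $G$ only by the presence or absence of the edge $ij$. The key observation is that $L'-L$ is supported entirely on the four entries indexed by $\{i,j\}\times\{i,j\}$: toggling the edge changes $L_{ii}$ and $L_{jj}$ by $\pm 1$ (the degree change) and changes $L_{ij}=L_{ji}$ by $\mp 1$. Concretely, if we add the edge, $(L'-L)$ has $+1$ in positions $(i,i)$ and $(j,j)$ and $-1$ in positions $(i,j)$ and $(j,i)$, and all other entries zero; deleting the edge gives the negative of this.

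First I would compute $(L'-L)x$ for the given eigenvector $x$. Because of the sparsity just described, $(L'-L)x$ can be nonzero only in coordinates $i$ and $j$. In coordinate $i$ it equals $(\pm 1)x_i + (\mp 1)x_j = \pm(x_i-x_j)$, and in coordinate $j$ it equals $(\mp 1)x_i+(\pm 1)x_j = \mp(x_i-x_j)$. Here is where the hypothesis $x_i=x_j$ enters decisively: it forces $x_i-x_j=0$, hence $(L'-L)x=0$. Therefore $L'x = Lx = \lambda x$, which is exactly the assertion that $\lambda$ is an eigenvalue of $L(G')$ with eigenvector $x$. Since the argument is symmetric in the two directions (adding versus deleting), it covers both cases at once.

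A couple of small checkpoints I would make explicit in the writeup: that $G'$ is still a legitimate graph of the kind under consideration — if we delete $e$ we must still have an edge to delete, and if we add $e$ it must not already be present; in the degenerate situations the statement is vacuous or trivial. I would also note, as the paper flags for several results, that the argument uses nothing about the off-diagonal entries being exactly $-1$: for the generalized Laplacian with $L_{ii}=\sum_{j\sim i}L_{ij}$, toggling a weighted edge of weight $w_{ij}$ changes the block by $w_{ij}$ in the diagonal slots and $-w_{ij}$ off-diagonal, and the same cancellation $w_{ij}(x_i-x_j)=0$ goes through.

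Honestly there is no real obstacle here; the only thing to be careful about is bookkeeping the signs in $L'-L$ correctly and stating the result uniformly for the "add" and "delete" cases rather than doing two nearly identical computations. The conceptual content is entirely in the one-line identity $(L'-L)x = \pm(x_i-x_j)(e^i-e^j)$, after which $x_i=x_j$ finishes it.
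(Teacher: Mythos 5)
Your proof is correct and is the standard argument for Merris's link principle; the paper itself gives no proof (it defers to Merris), and your identity $(L'-L)x=\pm(x_i-x_j)(e^i-e^j)$ is exactly the canonical verification, including the correct observation that it extends to weighted/generalized Laplacians. Nothing to add.
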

This transformation preserves the eigenvalue and eigenvector. It applies to
multiple graphs. 
Fig. \ref{l2s} shows examples of the transformation. 
\begin{figure} [H]
\centerline{
\epsfig{file=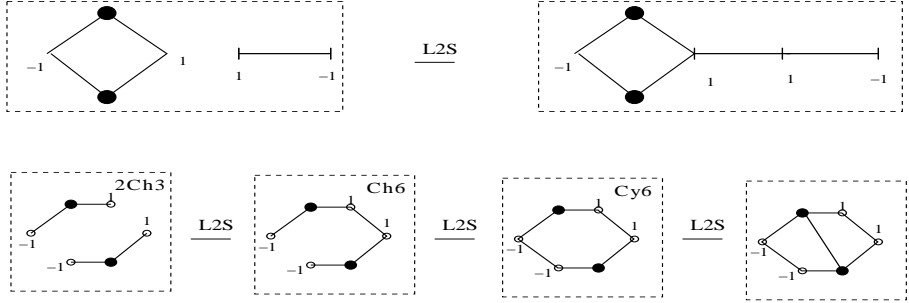,height=4 cm,width=12 cm,angle=0}
}
\caption{Example of the transform : link between two equal vertices.}
\label{l2s}
\end{figure}

We have the following corollary of the theorem.
\begin{theorem}
Let $\lambda$ be an eigenvalue of two graphs $G_1$ and $G_2$ for
respective eigenvectors $x^1,~x^2$ with two vertices $i,j$, 
such that $x_i^1 \neq 0$ or $x_j^2 \neq 0$ . Then the graph 
$G (V_1 \cup V_2, E_1 \cup E_2 \cup ij ) $ affords the eigenvector 
$y= x^2_j \begin{pmatrix} x^1 \cr 0 \end{pmatrix} + x_i^1  \begin{pmatrix} 0 \cr  x^2 \end{pmatrix} $ for $\lambda$.
\end{theorem}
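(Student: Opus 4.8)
The plan is to derive this as a direct corollary of the Link theorem applied to the \emph{disconnected} graph $G_1\sqcup G_2$, which is permitted since that theorem was noted to apply to multiple graphs. First I would observe that the Laplacian of the disjoint union $H=(V_1\cup V_2,\,E_1\cup E_2)$ is block diagonal, $L(H)=\mathrm{diag}(L(G_1),L(G_2))$. Consequently both $\begin{pmatrix} x^1 \cr 0 \end{pmatrix}$ and $\begin{pmatrix} 0 \cr x^2 \end{pmatrix}$ are eigenvectors of $L(H)$ for the eigenvalue $\lambda$ — for the first because $L(G_1)x^1=\lambda x^1$ while $L(G_2)\,0=0=\lambda\,0$, and symmetrically for the second — and hence so is any nonzero linear combination of them, the $\lambda$-eigenspace being a subspace.

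Next I would form the specific combination $y = x_j^2 \begin{pmatrix} x^1 \cr 0 \end{pmatrix} + x_i^1 \begin{pmatrix} 0 \cr x^2 \end{pmatrix}$ and evaluate its components at the two distinguished vertices. Since vertex $i$ lies in the $G_1$-block, where the second summand vanishes, $y_i = x_j^2\,x_i^1$; likewise $y_j = x_i^1\,x_j^2$. Hence $y_i=y_j$, which is precisely the hypothesis required to invoke the Link theorem at the pair $i,j$ in $H$: adding the edge $ij$ yields the graph $G(V_1\cup V_2,\,E_1\cup E_2\cup ij)$ for which $y$ is still an eigenvector affording $\lambda$, as claimed.

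Before applying the Link theorem I would verify $y\neq 0$, and this is the one place the hypothesis ``$x_i^1\neq 0$ or $x_j^2\neq 0$'' enters: if $x_j^2\neq 0$ then the $G_1$-block of $y$ equals $x_j^2 x^1$, a nonzero scalar times the eigenvector $x^1\neq 0$, so $y\neq 0$; if instead $x_i^1\neq 0$, then the $G_2$-block $x_i^1 x^2$ is nonzero. Either way $y$ is a genuine eigenvector.

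There is no real obstacle here; the argument is routine once one recognizes the statement as the Link theorem for a two-component graph. The only delicate point is the choice of the coefficients $x_j^2$ and $x_i^1$, engineered so that the $i$- and $j$-components of $y$ coincide, combined with the remark that these same two coefficients cannot both vanish under the hypothesis, which is exactly what guarantees $y\neq 0$.
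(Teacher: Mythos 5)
Your argument is correct and is exactly the intended one: the paper states this result as a corollary of the Link theorem without writing out a proof, and your route (block-diagonal Laplacian of the disjoint union, the combination chosen so that $y_i=x_j^2x_i^1=y_j$, then the Link theorem to add the edge $ij$) is precisely how that corollary is meant to be derived. Your explicit check that $y\neq 0$ under the hypothesis ``$x_i^1\neq 0$ or $x_j^2\neq 0$'' is a welcome detail the paper leaves implicit.
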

This allows to generate many more graphs that have an eigenvalue $\lambda$.

\subsection{Articulation}

An elementary transformation inspired by Merris's principle
of reduction and extension \cite{merris}
is to add a soft node to an existing soft node.
This does not change the eigenvalue. We have the following 
result.
\begin{theorem}
{\bf Articulation (A)} : Assume a graph $G(V,E)$ with $n$ vertices where
$x$ is an eigenvector such that $x_i=0$ for an eigenvalue $\lambda$. 
Then, the extension $x'$ of $x$ such that $x'_{1:n}=x_{1:n}$ and
$x'_{n+1}=0$ is an eigenvector for $\lambda$ for the Laplacian $L(G')$
where $G'(V',E')$ such that $V'=V \cup (n+1)$ and $E'=E\cup i(n+1)$.
\end{theorem}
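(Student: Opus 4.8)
The plan is to verify directly that $L(G')x' = \lambda x'$ by evaluating the action of $L(G')$ one coordinate at a time, using the block structure of $L(G')$ and the hypothesis $x_i = 0$.

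First I would record how $L(G')$ is obtained from $L(G)$. Attaching the pendant vertex $n+1$ to $i$ changes exactly three entries relative to the obvious embedding of $L(G)$ as the top-left block of an $(n+1)\times(n+1)$ matrix: the degree of $i$ increases by one, so $L(G')_{ii} = L(G)_{ii} + 1$; the new diagonal entry is $L(G')_{n+1,n+1} = 1$; and the new coupling is $L(G')_{i,n+1} = L(G')_{n+1,i} = -1$. Every other entry in the first $n$ rows and columns agrees with $L(G)$, and the remaining entries of row and column $n+1$ vanish.

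Then I would check the eigenvalue relation coordinate by coordinate. For a vertex $k \notin \{i,n+1\}$, row $k$ of $L(G')$ only involves the first $n$ components of $x'$, which are those of $x$, so $(L(G')x')_k = (L(G)x)_k = \lambda x_k = \lambda x'_k$. For $k=i$, the extra diagonal contribution is $+x_i$ and the coupling to $n+1$ contributes $-x'_{n+1}=0$, so $(L(G')x')_i = (L(G)x)_i + x_i = \lambda x_i + x_i$, which equals $0 = \lambda x'_i$ precisely because $x_i=0$. For $k=n+1$, the only nonzero entries of that row are $L(G')_{n+1,i}=-1$ and $L(G')_{n+1,n+1}=1$, giving $(L(G')x')_{n+1} = -x_i + x'_{n+1} = 0 = \lambda x'_{n+1}$, again by $x_i=0$. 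This yields $L(G')x' = \lambda x'$; since $x\neq 0$ forces $x'\neq 0$, $x'$ is a bona fide eigenvector, and $G'$ is connected because $G$ is and $n+1$ is joined to $i$.

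I do not expect a real obstacle: this is a short block computation. The only point needing care is the bookkeeping of the degree increment at $i$, and the conceptual content is simply that the two ``new'' scalar equations — the modified one at vertex $i$ and the fresh one at vertex $n+1$ — both close up exactly when $x_i=0$, which is why a soft node is the correct place to graft the pendant vertex. I would also note that the computation never uses that the off-diagonal weights equal $-1$, so the statement carries over verbatim to the generalized Laplacian.
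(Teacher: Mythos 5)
Your proof is correct and complete; the coordinate-by-coordinate check, with the three modified entries of the Laplacian and the observation that both new scalar equations close precisely because $x_i=0$, is exactly the elementary verification this theorem requires. The paper itself gives no written proof (it only attributes the result to Merris's reduction/extension principle), and your closing remark that the argument never uses the off-diagonal weights being $-1$ matches the paper's own claim that the transformation remains valid for arbitrary edge weights.
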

\begin{figure} [H]
\centerline{
\epsfig{file=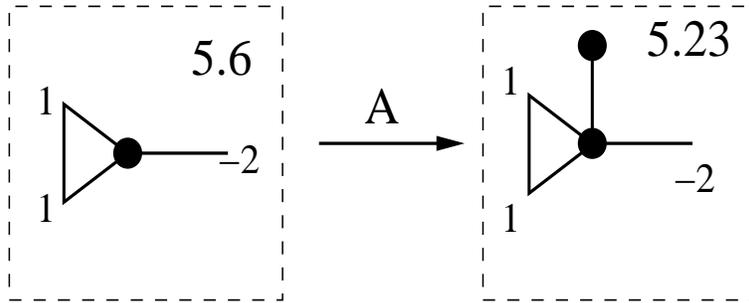,height=4 cm,width=10 cm,angle=0}
}
\caption{Example of the articulation property. The large dot
corresponds to a soft node.}
\label{f1}
\end{figure}
The general case presented by Merris \cite{merris} amounts to applying
several times this elementary transformation. \\
The transformation is valid for graphs with arbitrary weights
and the extended edges can have arbitrary weights.

Fig. \ref{f1} illustrates this property on the two graphs labeled 
$5.6$ and $5.23$ in the classification given in \cite{crs01}.
An immediate consequence of this elementary transform
is that any soft node can be extended into an arbitrarily large graph of
soft nodes while preserving the eigenvalue and extending the eigenvector
in a trivial way. Fig. \ref{f1a} shows two graphs that have the
same eigenvalue $\lambda=1$ and that are connected by the articulation 
transform.
\begin{figure} [H]
\centerline{
\epsfig{file=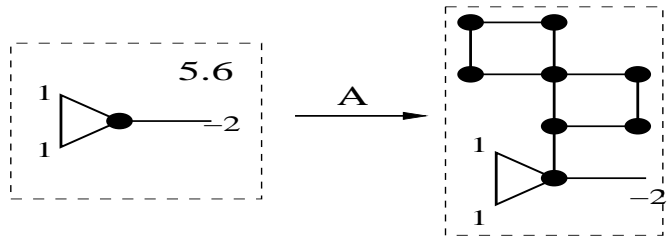,height=4 cm,width=10 cm,angle=0}
}
\caption{Two graphs connected by the articulation transform.}
\label{f1a}
\end{figure}

\subsection{Soldering}

A consequence of the contraction principle of Merris \cite{merris} is 
that coalescing two soft nodes of
a graph leaves invariant the eigenvalue. This is especially important
because we can "solder" two graphs at a soft node.
\begin{theorem}
{\bf Soldering } : Let $x$ be an eigenvector affording $\lambda$ 
for a graph $G$. Let $i$ and $j$ be two soft nodes 
without common neighbors. Let $G'$ be the graph obtained from
$G$ by contracting $i$ and $j$ and $x'$ be the vector obtained from
$x$ by deleting its $j$th component. Then $x'$ is an eigenvector
of $L(G')$ for $\lambda$.
\end{theorem}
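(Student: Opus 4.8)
The plan is to work directly with the eigenvalue equation $L(G)x = \lambda x$, rewritten componentwise, and to check that the corresponding equations hold for $L(G')x'$. First I would set up notation carefully: say the soft nodes are vertices $i$ and $j$, so $x_i = x_j = 0$, and let the merged vertex in $G'$ be called $i$ (we delete $j$ and re-attach its edges to $i$). Because $i$ and $j$ have no common neighbor, the neighborhood of the merged vertex in $G'$ is exactly the disjoint union $N_G(i) \sqcup N_G(j)$, and its degree is $\deg_G(i) + \deg_G(j)$. The vector $x'$ is $x$ with the $j$th entry removed; note $x'_i = 0$ still.

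The key step is to verify the eigenvalue equation at each vertex of $G'$. For a vertex $k \neq i$ in $G'$, its neighbors and edge-weights are unchanged from $G$ (an edge $kj$ in $G$ becomes an edge $ki$ in $G'$, but $x'_i = 0 = x_j$, so the contribution to the sum $\sum_{l \sim k}(x'_k - x'_l)$ is identical), hence the $k$th equation for $L(G')x' = \lambda x'$ reduces verbatim to the $k$th equation for $L(G)x = \lambda x$, which holds. For the merged vertex $i$: the $i$th row of $L(G')x'$ equals
$$
\bigl(\deg_G(i)+\deg_G(j)\bigr)x'_i - \sum_{l \in N_G(i)} x'_l - \sum_{l \in N_G(j)} x'_l.
$$
Since $x'_i = 0$ and, because $i,j$ are not adjacent to each other, $x'_l = x_l$ for every $l$ appearing here, this equals $\bigl(\deg_G(i)\,x_i - \sum_{l\sim i}x_l\bigr) + \bigl(\deg_G(j)\,x_j - \sum_{l\sim j}x_l\bigr)$, using $x_i = x_j = 0$ to split it. But each parenthesized term is the left side of the eigenvalue equation for $L(G)x = \lambda x$ at $i$ and at $j$ respectively, so the sum is $\lambda x_i + \lambda x_j = 0 = \lambda x'_i$. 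Thus $L(G')x' = \lambda x'$.

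The only genuine subtlety — the point I would flag as the main obstacle — is the role of the hypothesis that $i$ and $j$ have no common neighbor, and the implicit fact that $i$ and $j$ are themselves non-adjacent (or, if they are, the edge $ij$ is simply dropped). If $i$ and $j$ shared a neighbor $l$, then in $G'$ the two edges $il$ and $jl$ would coalesce into a single edge (or a double edge, depending on convention), changing the degree of $l$ and hence altering the $l$th equation; the argument above would break. Because the $x$-values at $i$ and $j$ are zero, one also needs to confirm that $x'$ is not the zero vector — this follows since $x \neq 0$ and at most the zero entry $x_j$ was removed, so $x'$ still has a nonzero component. One should also remark, as with the earlier transformations, that the computation uses only $L_{ii} = \sum_{l \sim i} |L_{il}|$ and never the specific value $-1$, so the result extends to generalized (weighted) Laplacians, provided the weights on coalesced edges add.
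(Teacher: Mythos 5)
Your proof is correct. Note that the paper itself gives no proof of this theorem --- it simply asserts the statement as a consequence of Merris's contraction principle and moves on --- so your direct componentwise verification actually supplies an argument the paper omits. The two checks you perform are exactly the right ones: at a vertex $k\neq i$ the relabelling of an edge $kj$ to $ki$ is invisible because $x'_i=0=x_j$, and at the merged vertex the row of $L(G')x'$ splits into the two original rows at $i$ and $j$, each of which equals $\lambda\cdot 0$. You also correctly identify why the ``no common neighbors'' hypothesis is essential (a shared neighbor $l$ would lose a degree under contraction, turning its equation into $(\lambda-1)x_l$ on the right unless $x_l=0$), and your remarks on non-adjacency of $i,j$, on $x'\neq 0$, and on the extension to weighted Laplacians are all sound. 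Nothing to fix.
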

\begin{figure} [H]
\centerline{
\epsfig{file=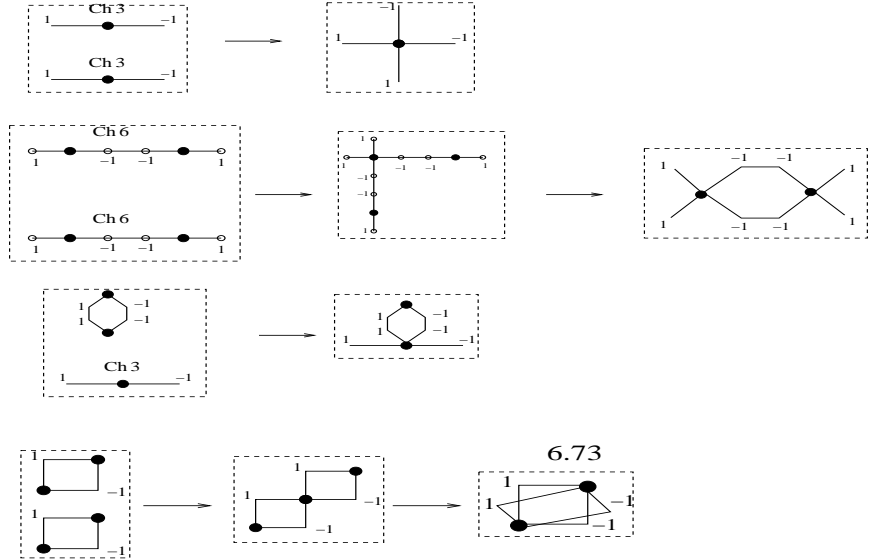,height=8 cm,width=12 cm,angle=0}
}
\caption{Examples of the soldering transform.}
\label{sold}
\end{figure}
This transformation is valid for graphs with arbitrary weights.

\subsection{Regular expansion of a graph}


We have the following theorem.
\begin{theorem}
Let $x$ be an eigenvector of a graph $G$ for $\lambda$ 
and let $i$ be a vertex connected only to $p$ soft nodes.
Let $G'$ be the graph obtained from $G$ by replacing $i$ by
a $d$-regular graph whose $k$ vertices are all connected to the
$p$ soft nodes. Then $\lambda=p$ and an eigenvector $x'$ of
$G'$ is formed by assigning to the new vertices, the value 
$x'_j=x_i/k$.
\end{theorem}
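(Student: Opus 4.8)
The plan is to first read off the value of $\lambda$ from the row of $L(G)$ corresponding to the vertex $i$, and then verify directly that the proposed vector $x'$ satisfies $L(G')\,x' = \lambda x'$, checking the eigenvalue equation vertex by vertex after sorting the vertices of $G'$ into three natural families.

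For the eigenvalue: the relation $(L(G)x)_i = \lambda x_i$ reads $p\,x_i - \sum_{j\sim i}x_j = \lambda x_i$, and since every neighbour $j$ of $i$ is a soft node the sum vanishes, so $(p-\lambda)x_i = 0$. Assuming $x_i\neq 0$ — if $x_i=0$ then $i$ is itself a soft node and the statement collapses to a trivial extension — this forces $\lambda = p$; it also shows $x'$ is nonzero, since it assigns the value $x_i/k\neq 0$ to the new vertices.

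Next I would fix notation: call $s_1,\dots,s_p$ the soft neighbours of $i$ (these are exactly all the neighbours of $i$) and $i_1,\dots,i_k$ the vertices of the $d$-regular graph replacing $i$, each adjacent in $G'$ to all of $s_1,\dots,s_p$; define $x'_v=x_v$ for every vertex $v$ of $G$ other than $i$, and $x'_{i_j}=x_i/k$. Then I check $(L(G')x')_v = p\,x'_v$ in three cases. (i) If $v$ is a vertex of $G$ distinct from $i$ and not one of the $s_\ell$, then $v$ was not adjacent to $i$ in $G$ and is not adjacent to any $i_j$ in $G'$ (the new vertices touch only the $s_\ell$), so its neighbourhood in $G'$ and all entries of $x'$ occurring in its equation coincide with those in $G$, and the equation holds by the corresponding equation for $G$. (ii) If $v=s_\ell$, then $x'_{s_\ell}=0$, so only the neighbour sum survives in $(L(G')x')_{s_\ell}$; removing the edge $s_\ell i$ (which contributed $x_i$) and adding the $k$ edges $s_\ell i_j$ (which contribute $k\cdot(x_i/k)=x_i$ in all) leaves the sum of neighbour values of $s_\ell$ unchanged, hence still zero because the $G$-equation at $s_\ell$ gives $\sum_{w\sim s_\ell}x_w = \lambda x_{s_\ell}=0$. (iii) If $v=i_j$, its degree in $G'$ is $d+p$, its $d$ neighbours inside the regular graph all carry the value $x_i/k$ and its $p$ neighbours $s_\ell$ carry $0$, so $(L(G')x')_{i_j} = (d+p)\frac{x_i}{k} - d\cdot\frac{x_i}{k} = p\,\frac{x_i}{k} = p\,x'_{i_j}$.

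All three computations are elementary; the step needing care is the degree bookkeeping at the soft nodes in case (ii), where one must be sure each $s_\ell$ loses exactly the one edge to $i$ and gains exactly the $k$ edges to $i_1,\dots,i_k$. The weight $x_i/k$ is calibrated precisely so that these $k$ new edges reproduce the contribution $x_i$ that was lost, and the regularity of the replacement graph is exactly what makes the diagonal term and the internal-neighbour term cancel down to $p\,x'_{i_j}$ in case (iii) — with a non-regular replacement no constant value on its vertices could satisfy all the new equations simultaneously. Assembling the three cases yields $L(G')x' = p\,x' = \lambda x'$, which proves both $\lambda = p$ and that $x'$ affords $\lambda$ for $L(G')$.
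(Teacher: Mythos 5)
Your proof is correct and follows essentially the same route as the paper: read off $\lambda=p$ from the eigenvalue equation at $i$ (which, as you rightly note, requires $x_i\neq 0$), then verify $L(G')x'=p\,x'$ row by row on the three families of vertices. Your case analysis is in fact slightly more complete than the paper's, which leaves case (i) implicit and treats case (ii) through an unnamed remainder term $\alpha$.
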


\begin{proof}
Without loss of generality, we can assume that $i=n$ and that
the $p$ soft nodes are $n-p+1,\dots,n-1$.
We have 
$$\left ( \begin{matrix}
\dots & \dots  & \dots & \dots  & \dots & \dots & \dots \cr
\dots & \dots  & \dots & \dots  & \dots & \dots & \dots \cr
\dots & \dots  & \dots & \dots  & \dots & \dots & \dots \cr
0 & \dots & 0 & -1     & \dots & -1 & p \cr
\end{matrix}  \right) 
\left ( \begin{matrix}
\dots \cr
0 \cr
0 \cr
x_n \cr
\end{matrix}  \right)
=
\lambda \left ( \begin{matrix}
\dots \cr
0 \cr
0 \cr
x_n \cr
\end{matrix}  \right)
$$
The $n$th line reads
$$ p x_n = \lambda x_n$$
so that $\lambda = p$.
The $n-1$th line reads
$$\alpha + (-1) x_n = p x_{n-1}=0$$
where $\alpha$ is the sum of the other terms.

Let us detail the eigenvector relation for the Laplacian for $G'$.
Consider any new vertex $j$ linked to the $p$ soft nodes and to $d$ 
new nodes. The corresponding line of the eigenvector relation 
for the Laplacian for $G'$ reads
$$ (d+p) x'_j + \sum_{i \sim j, i\ge n} (-1) x'_i = \lambda' x'_j .$$
This implies
$$(d+p-\lambda') x'_j = \sum_{i \sim j, i\ge n} x'_i .$$
An obvious solution is
$$\lambda' = \lambda = p,~~~x'_i =x'_n ~~\forall i\ge n+1. $$ 
The value $x'_n$ is obtained by examining line $n-1$. We have 
$$\alpha + \sum_{i=n}^{n-k-1}(-1)x_i'=0$$
so that $$x'_n = {x_n \over k}.$$
In fact, we can get all solutions by satisfying the two conditions
\be\label{cond_contrac} 
\forall j\ge n ~~ d x'_j = \sum_{i \sim j}x'_i,~~ x_n = \sum_{i\ge n} x'_i .\ee
\end{proof}

Fig. \ref{contrac} shows 
examples of expansion from a single soft node for different values of $d$.
Here the eigenvalue is $1$.
Fig. \ref{excontrac} shows examples of expansion from two soft nodes.
The eigenvalue is $2$.
For $d=2$, the values at the edges at the bold edges are such that
their sum is equal to $1$. 
For $d=2$, the values at
the triangle are all equal to $t$, the same holds for the square with a value
$s$. These values verify $3 t + 4 s = 1$.
\begin{figure} [H]
\centerline{
\epsfig{file=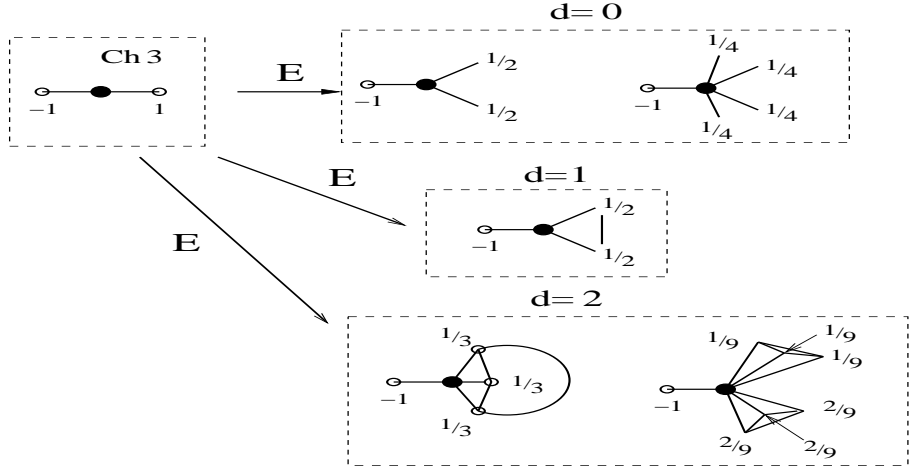,height=6.2 cm,width=12 cm,angle=0}
}
\caption{Examples of expansion from a single soft node.}
\label{contrac}
\end{figure}
\begin{figure} [H]
\centerline{
\epsfig{file=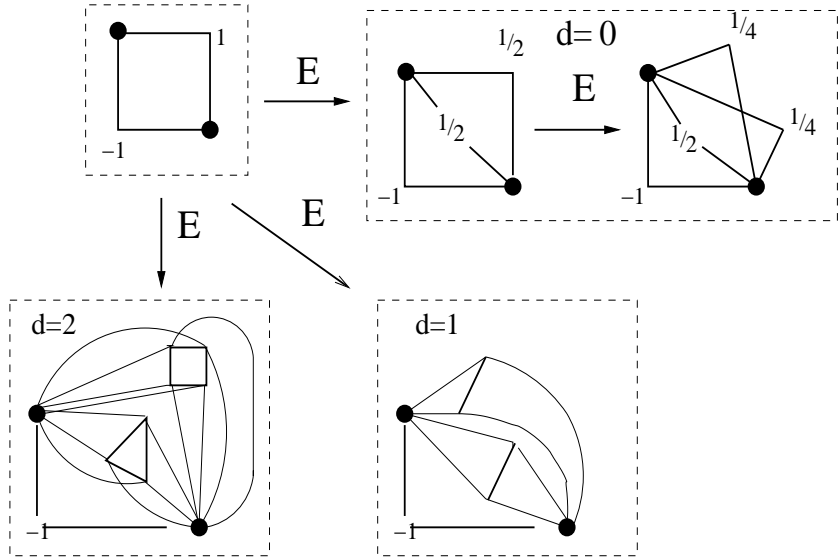,height=8 cm,width=12 cm,angle=0}
}
\caption{Examples of expansion from two soft nodes. For $d=2$, the values at
the triangle are all equal to $t$, the same holds for the square with a value
$s$. These values verify $3 t + 4 s = 1$.}
\label{excontrac}
\end{figure}

\subsection{Replace coupling by square}

We have the following transformation that leaves the
eigenvalue unchanged \cite{ckk18a}.
\begin{theorem}\textbf{(Replace an edge by a soft square)}\\
\label{gadget}
Let ${x}$ be an eigenvector of the Laplacian of a graph
${G}$ for an eigenvalue $\lambda$.
Let ${G}'$ be the graph obtained from ${G}$
by deleting a joint $ij$ such that ${x}_i=-{x}_j$ and
adding two soft vertices 
$k,l \in {V}({G}')$ for the extension
${x}'$ of ${x}$
(i.e. ${x}'_m={x}_m$ for $m\in {V}({G})$
and ${x}'_k={x}'_l=0$)
and the four edges $ik,kj,il,lj$.
Then, ${x}'$ is an eigenvector of the Laplacian of 
${G}'$ for the eigenvalue $\lambda$.
\end{theorem}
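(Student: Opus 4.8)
The plan is to verify the eigenvector equation $L(G')x' = \lambda x'$ row by row, using the fact that $L(G)x = \lambda x$ holds and that $x_i = -x_j$. The rows of $L(G')$ split into four types: (a) the new soft vertices $k$ and $l$; (b) the two endpoints $i$ and $j$ of the deleted edge; (c) all other original vertices, whose rows are unchanged by the transformation; (d) — there is no fourth type, since every vertex of $G$ other than $i,j$ keeps exactly the same neighbourhood. Type (c) is immediate: the corresponding rows of $L(G')$ and $L(G)$ coincide and $x'$ agrees with $x$ on those coordinates, so the equation is inherited.

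For type (a), consider vertex $k$. In $G'$ it has degree $2$, with neighbours $i$ and $j$, so the $k$-th row of $L(G')x'$ equals $2x'_k - x'_i - x'_j = 2\cdot 0 - x_i - x_j = -(x_i + x_j) = 0 = \lambda x'_k$, where the last step uses $x_i = -x_j$ and $x'_k = 0$. The computation for $l$ is identical. For type (b), consider vertex $i$. Passing from $G$ to $G'$ we remove the edge $ij$ (decreasing the degree of $i$ by one and deleting the $-x_j$ contribution) and add the two edges $ik$ and $il$ (increasing the degree by two and adding contributions $-x'_k - x'_l = 0$). Hence the $i$-th row of $L(G')x'$ equals
\[
\bigl(L(G)x\bigr)_i - \bigl(-x_j\bigr) + 2x'_i - x'_k - x'_l
= \lambda x_i + x_j + 2x_i - 0,
\]
wait — let me organize this more carefully: the degree of $i$ in $G'$ is $\deg_G(i) - 1 + 2 = \deg_G(i)+1$, so $\bigl(L(G')x'\bigr)_i = (\deg_G(i)+1)x_i - \sum_{m \sim_G i,\, m\neq j} x_m - x'_k - x'_l = \bigl(L(G)x\bigr)_i + x_i + x_j - 0 - 0 = \lambda x_i + (x_i + x_j) = \lambda x_i$, again by $x_i + x_j = 0$. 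The same holds for $j$ by symmetry.

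Since all rows check out, $x'$ is an eigenvector of $L(G')$ for $\lambda$, and it is nonzero because $x$ is. I expect no real obstacle here: the only thing to be careful about is the bookkeeping of degrees at $i$ and $j$ (the edge $ij$ is removed while two new edges are attached), and the observation that the two new soft-vertex contributions cancel precisely because $x'_k = x'_l = 0$, while the loss of the $ij$ edge is compensated by the term $x_i + x_j = 0$. One should also note the transformation as stated requires $G'$ to remain a simple graph, i.e. $k,l$ are genuinely new vertices and the edges $ik,kj,il,lj$ were not already present; this is built into the hypothesis. As with the articulation and soldering transforms, the argument uses only that the off-diagonal entries corresponding to the four new edges are equal and that diagonal entries are the corresponding degree sums, so it extends verbatim to the generalized Laplacian with arbitrary (equal, on the square) edge weights.
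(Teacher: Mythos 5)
Your proof is correct and follows essentially the same approach as the paper: a direct row-by-row verification of the eigenvector equation, with the loss of the $ij$ edge compensated by $x_i+x_j=0$ and the new edges contributing nothing because $x'_k=x'_l=0$. You are in fact slightly more complete than the paper's own argument, which only writes out the relation at vertex $i$ and leaves the rows at $k$ and $l$ implicit; your closing remark that the weighted generalization requires the square's weights to be balanced (so that $w_{ik}x_i+w_{jk}x_j=0$ and $w_{ik}+w_{il}=w_{ij}$) matches the weight choice $w_{i,k}=\alpha w_{i,j}$, $w_{i,l}=(1-\alpha)w_{i,j}$ made in the paper.
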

This result was proved in \cite{ckk18a} for a graph with weights
1. Here we generalize it to a graph with arbitrary weights.

\begin{proof}

The eigenvalue relation at vertex $i$ reads
$$(d_i - \lambda) x_i = \sum_{m\sim i, m \neq j} w_{i,m} x_m + w_{i,j} x_j $$
Since $x_i=-x_j$, this implies
$$(d_i +w_{i,j} - \lambda) x_i = \sum_{m\sim i, m \neq j} w_{i,m} x_m . $$
Introducing the two new vertices $k,l$ such that ${x}'_k={x}'_l=0$ 
connected to $i$ by edges of weights $w_{i,k}= \alpha w_{i,j}, ~~
w_{i,l}= (1-\alpha) w_{i,j}$, the relation above leads to
$$(d_i + w_{i,k} + w_{i,l} - \lambda) x'_i = \sum_{m\sim i} w_{i,m} x'_m + w_{i,k} x'_k + w_{i,l} x'_l , $$
which shows that $x'$ is eigenvector of the new graph.

\end{proof}

See Fig. \ref{esqua} for an illustration of the theorem.

\begin{figure} [H]
\centerline{
\epsfig{file=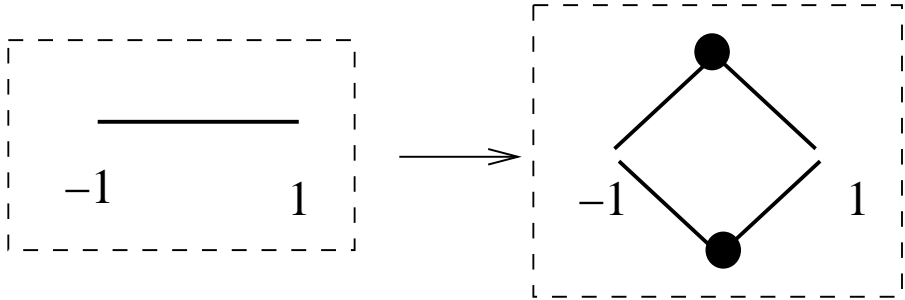,height=4 cm,width=12 cm,angle=0}
}
\caption{Replacement of coupling by a square, in both cases the eigenvalue
is $\lambda=2$.}
\label{esqua}
\end{figure}

\section{Transversality : change of eigenvalue}

Here we present operators that change the eigenvalue of a graph
Laplacian in a predictable
way. The operators shift the eigenvalue $\lambda$ to 
$\lambda+1$ for the first two and $\lambda+2$ for the third one.
At the end of the section we introduce the
eigenvalue of a product graph. 

\subsection{Inserting soft nodes}
\begin{theorem}
Let $x$ be an eigenvector of a graph $G$ with weights 1 for $\lambda$.
Assume we can pair the non zero components of $x$ as $\{i,j\}$
where $x_i=-x_j$ non zero.
Let $G'$ be the graph obtained from $G$ by including $k$
soft nodes between each pair $\{i,j\}$. The vector $x'$ so
obtained is an eigenvector of the Laplacian of $G'$ for
eigenvalue $\lambda +k$. 
\end{theorem}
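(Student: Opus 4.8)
The plan is to exhibit the eigenvector and eigenvalue explicitly and then verify the eigenvalue relation line by line. Concretely, I read the construction as follows: for each matched pair $\{i,j\}$ we adjoin $k$ new vertices $s^{ij}_1,\dots,s^{ij}_k$, each joined to $i$ and to $j$ by a single edge (and to nothing else), we declare all of them soft, and we let $x'$ be the extension of $x$ obtained by setting $x'_{s^{ij}_l}=0$ on every new vertex while keeping $x'_v=x_v$ on $V(G)$. I then claim $L(G')x'=(\lambda+k)x'$, i.e. $\lambda':=\lambda+k$ is the relevant eigenvalue. Two structural remarks are used throughout: since the matching is a pairing of the \emph{non-zero} components of $x$, each vertex $v$ of $G$ with $x_v\neq 0$ lies in exactly one pair and therefore gains exactly $k$ new neighbours in $G'$, all carrying value $0$; and since the new edges only touch the non-zero-component vertices, every vertex $v$ with $x_v=0$ keeps its $G$-degree and $G$-neighbourhood in $G'$.

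Then I would check the eigenvector equation for $L(G')$ at each of the three types of vertex. At a new soft node $s^{ij}_l$ (degree $2$, value $0$, neighbours $i,j$) the equation reads $(2-\lambda')\cdot 0 = x'_i+x'_j = x_i+x_j = 0$, which holds because $x_i=-x_j$, and in fact for any $\lambda'$. At an old vertex $v$ with $x_v=0$ the equation is identical to the one $v$ satisfied in $G$ for $\lambda$, since neither its degree nor its neighbourhood changed, and both sides are $0$. At an old vertex $v$ with $x_v\neq 0$ we have $d_{G'}(v)=d_G(v)+k$ and $\sum_{w\sim v\text{ in }G'}x'_w=\sum_{w\sim v\text{ in }G}x_w$ because the $k$ new neighbours contribute nothing; hence $(d_{G'}(v)-\lambda')x_v=(d_G(v)-\lambda)x_v$, which equals $\sum_{w\sim v\text{ in }G}x_w$ exactly by the eigenvalue relation for $L(G)$ at $v$. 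Since these three types exhaust $V(G')$ and $x'\neq 0$ (it extends $x\neq 0$), $x'$ is an eigenvector of $L(G')$ for $\lambda+k$.

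I do not expect a real obstacle: the argument is a bookkeeping verification rather than anything deep. The points that require care are precisely the two structural remarks above — that each non-zero vertex belongs to a unique pair (this is what makes the degree increase by exactly $k$, hence the shift exactly $k$), and that the new vertices attach only to non-zero-component vertices (so the equations at the pre-existing soft nodes are untouched). I would also comment on the hypothesis of unit weights: it is used only to guarantee that the degree of each endpoint rises by exactly $k$; the same computation goes through for weighted graphs provided the total weight of the $k$ inserted length-two paths equals $k$ at each endpoint, but the unit-weight formulation is the clean one. As a sanity check, $\lambda+k\geq 1>0$, consistent with $x'$ having zero coordinate sum and hence being orthogonal to $\mathbf{1}$.
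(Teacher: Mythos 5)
Your proof is correct and follows essentially the same route as the paper: verify the eigenvalue relation vertex by vertex, observing that each paired vertex gains exactly $k$ zero-valued neighbours (so its degree, and hence the eigenvalue, shifts by $k$) while the neighbour sum is unchanged. In fact you are more thorough than the paper, which only writes out the equation at a paired vertex $i$ and leaves the checks at the new soft nodes and at the pre-existing zero-valued vertices implicit.
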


\begin{proof}
Let $i,j \in V(G)$ be a pair such that $x_i=-x_j$. The 
eigenvector equation reads
$$d_i x_i -\sum_{m \sim i} x_m = \lambda x_i .$$
Introducing $k$ new vertices $x'_p =0,~~p=1,\dots k$
we can write the relation as
$$(d_i+k) x'_i -\sum_{m \sim i} x'_m = (\lambda+k) x'_i .$$
This shows that $x'$ is an eigenvector for the new graph.
\end{proof}
\begin{figure} [H]
\centerline{
\epsfig{file=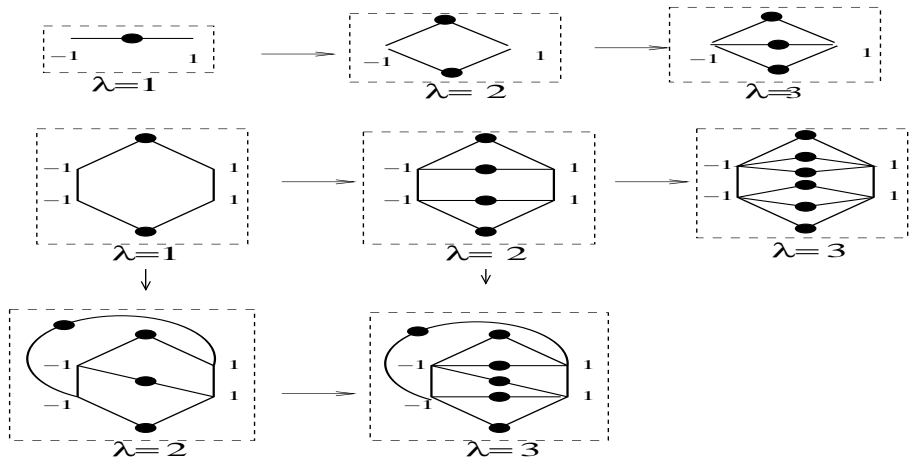,height=6 cm,width=12 cm,angle=0}
}
\caption{Example of the action of inserting a soft node.}
\label{lad}
\end{figure}
Fig. \ref{lad} shows
an example of the action of inserting a soft node.

When the graph is weighed, the result is still valid. Consider that
we add only one soft vertex connected to $i$ by a weight $w_{i,k}$. 
The eigenvalue of the new graph is $\lambda + w_{i,k}$. \\
This can transform a graph with an integer eigenvalue to a graph
with an irrational eigenvalue.

\subsection{Addition of a soft node} 
Connecting a soft node to all the vertices of 
a graph augments all the non zero eigenvalues by 1.
This result was found by Das \cite{das04}. We recover it here
and present it for completeness.
\begin{theorem}
{\bf Addition of a soft node} : Let $G(V,E)$ be a graph affording an eigenvalue 
$\lambda \neq 0$ for an eigenvector $x$. Then the new graph $G'$ obtained by adding
a node connected to all the nodes of $G$ has eigenvalue $\lambda +1$
for the eigenvector $x'$ obtained by extending $x$ by a zero component.
\end{theorem}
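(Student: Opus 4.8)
The plan is to verify directly the eigenvector equation for $L(G')$ applied to the extended vector $x'$. Label the new vertex as vertex $n+1$ and set $x'_{n+1}=0$, $x'_k = x_k$ for $k=1,\dots,n$. The Laplacian $L(G')$ has a block structure: its top-left $n\times n$ block is $L(G) + \mathrm{Id}_n$ (each original vertex gains one edge to the new vertex, so its degree increases by $1$), the last column (and row) has entries $-1$ in positions $1,\dots,n$ coming from the new edges, and the $(n+1,n+1)$ entry is $n$, the degree of the new vertex.

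First I would check the equation on the first $n$ components. For a row $k\le n$, the left-hand side $(L(G')x')_k$ equals $(L(G)x)_k + x_k - 1\cdot x'_{n+1} = (L(G)x)_k + x_k = \lambda x_k + x_k = (\lambda+1)x_k$, using that $x$ affords $\lambda$ for $L(G)$ and that $x'_{n+1}=0$. So these rows give exactly $(\lambda+1)x'_k$, as required. Second I would check the last row: $(L(G')x')_{n+1} = -\sum_{k=1}^n x_k + n\cdot 0 = -\sum_{k=1}^n x_k$, and since $\lambda\neq 0$ the eigenvector $x$ is orthogonal to $\mathbf 1$ (the property $\sum_k v^i_k = 0$ recalled in Section 2), hence this is $0 = (\lambda+1)\cdot 0 = (\lambda+1)x'_{n+1}$. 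Therefore $L(G')x' = (\lambda+1)x'$, and since $x\neq 0$ forces $x'\neq 0$, the eigenvalue is $\lambda+1$.

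There is no real obstacle here; the only subtlety worth stating explicitly is the use of $\lambda\neq 0$ to guarantee $\sum_k x_k = 0$, which is exactly what makes the last row work — without it the argument fails, consistent with the fact that $0$ is preserved (the graph $G'$ stays connected, so $\mathbf 1$ remains the kernel). I would also remark that $G'$ is connected whenever it has the new vertex adjacent to everything, so the labelling $\lambda+1$ as a genuine eigenvalue is unambiguous. If desired, one can phrase the proof in matrix form by writing $L(G') = \begin{pmatrix} L(G)+\mathrm{Id}_n & -\mathbf 1_n \\ -\mathbf 1_n^T & n \end{pmatrix}$ and multiplying by $\begin{pmatrix} x \\ 0 \end{pmatrix}$, which makes both computations above simultaneous.
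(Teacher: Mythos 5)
Your proof is correct and follows essentially the same route as the paper, which writes the block identity $L(G') = \begin{pmatrix} L(G)+\mathrm{Id}_n & -\mathbf 1_n \\ -\mathbf 1_n^T & n \end{pmatrix}$ and multiplies by $(x,0)^T$. You are in fact slightly more explicit than the paper in isolating where the hypothesis $\lambda\neq 0$ enters (namely $\sum_k x_k=0$ for the last row), which is a worthwhile clarification but not a different argument.
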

See Fig. \ref{add} for examples.

\begin{proof}
Assume $\lambda$ to be an eigenvalue with eigenvector $v$ for
the Laplacian $L(G)$ of a graph $G$ with $n$ vertices. Now add an
extra vertex $n+1$ connected to all vertices of $G$ 
and form $L(G \cup \{n+1\})$.
We have the following identity
$$ \begin{pmatrix}
             & | & -1 \\
L(G) + I_n   & | & -1 \\
             & | & -1 \\
-------------& | &---- \\
-1 -1, \dots -1& | & n 
\end{pmatrix} 
\begin{pmatrix} 
              \\
 v  \\
              \\
-- \\
0
\end{pmatrix} =
(\lambda +1 )
\begin{pmatrix} 
              \\
  v  \\
              \\
-- \\
0
\end{pmatrix} 
$$
which proves the statement.
\end{proof}
\begin{figure} [H]
\centerline{
\epsfig{file=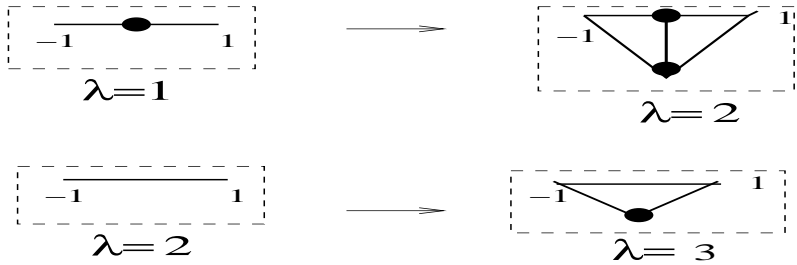,height=4 cm,width=12 cm,angle=0}
}
\caption{Examples of the addition of a soft node.}
\label{add}
\end{figure}

Important examples are the ones formed with the special graphs
considered above. There, adding a vertex to an $n-1$ graph, 
one knows explicitly $n-1$ eigenvectors
and eigenvalues.

The theorem 3.2 by Das \cite{das04} can be seen as a direct consequence
of adding a soft node and an articulation to a graph.

\subsection{Inserting a matching }

First we define perfect and alternate perfect matchings.
\begin{definition}[Perfect matching]
A perfect matching of a graph ${G}$ is a matching 
(i.e., an independent edge set)
in which every vertex of the graph is incident to exactly one edge of the matching.
\end{definition}
\begin{definition}[Alternate perfect matching]
An alternate perfect matching for a vector $v$ on the nodes of a graph ${G}$
is a perfect matching for the nonzero nodes such that edges $e_{ij}$ of the matching
satisfy $v_i=-v_j ~~ ( \neq 0)$.
\end{definition}

We have the following result \cite{ckk18a} inspired by the alternating principle
of Merris \cite{merris}.
\begin{theorem}[\textbf{Add/Delete an alternate perfect matching}]
\label{alt}
Let $v$ be an eigenvector of $L({G})$ affording an eigenvalue $\lambda$.
Let ${G}'$ be the graph obtained from ${G}$
by adding (resp. deleting) an alternate perfect matching for $v$.
Then, $v$ is an eigenvector of $L({G}')$ affording the eigenvalue 
$\lambda +2$ (resp. $\lambda -2$).
\end{theorem}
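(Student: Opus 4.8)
The plan is to mimic the proof of the "inserting a matching" result's one-node analogue (the inserting-soft-nodes theorem) but now acting at \emph{both} endpoints of each matching edge simultaneously, so that the two contributions combine to give a shift of $+2$ rather than $+1$. First I would fix an edge $e_{ij}$ of the alternate perfect matching, so $v_i=-v_j\neq 0$, and write down the eigenvector equation of $L(G)$ at vertex $i$:
\[
d_i v_i-\sum_{m\sim i}v_m=\lambda v_i .
\]
Passing to $G'$, the only change at row $i$ is that the degree increases by $1$ (the new edge $ij$) and a new term $-v_j$ appears on the left-hand side. Using $v_j=-v_i$, that new term is $+v_i$, so the row becomes
\[
(d_i+1)v_i-\sum_{m\sim i}v_m-v_j=(d_i+1)v_i-\sum_{m\sim i}v_m+v_i=(\lambda+2)v_i .
\]
Thus row $i$ is satisfied for the eigenvalue $\lambda+2$ with the \emph{same} vector $v$, and by the symmetric computation row $j$ is satisfied as well.

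Next I would handle the rows corresponding to vertices $k$ with $v_k=0$. Such a vertex may or may not be an endpoint of a matching edge — but the matching is only required on the nonzero nodes, so a zero vertex is never matched and its row is unchanged in $G'$: the equation $d_kv_k-\sum_{m\sim k}v_m=\lambda v_k$ reads $-\sum_{m\sim k}v_m=0$ on both sides after substituting $v_k=0$, and since no new edges touch $k$, the same identity holds in $G'$ with $\lambda$ replaced by $\lambda+2$ (the right-hand side is $0$ regardless). Hence every row — whether at a matched nonzero vertex or at a zero vertex — is consistent with $L(G')v=(\lambda+2)v$. The deletion statement is the same computation run backwards: if $G$ already contains the alternate perfect matching, removing it decreases each matched vertex's degree by $1$ and removes a $-v_j=+v_i$ term, giving the shift $\lambda-2$; formally one applies the addition statement to $G'=G\setminus M$.

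The only genuinely delicate point — and the step I would be most careful about — is bookkeeping over the matching as a whole rather than a single edge: one must check that distinct matching edges contribute independently, i.e. that modifying row $i$ for the edge $e_{ij}$ does not interfere with the modification of row $i'$ for another edge $e_{i'j'}$. Because a matching is by definition an independent edge set, each vertex lies in at most one matching edge, so the degree increments and the extra $\pm v$ terms are assigned to distinct rows with no collisions; this is what makes the per-edge verification above assemble into a global one. I would state this explicitly. Everything else is routine substitution, valid verbatim for arbitrary edge weights if one inserts $w_{ij}$ in place of $1$, which would yield a shift of $2w_{ij}$ — though I would keep the statement at weights $1$ to match the hypothesis.
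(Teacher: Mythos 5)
Your proof is correct: the row-by-row verification at the matched vertices (where $v_j=-v_i$ turns the new $-v_j$ term into $+v_i$, combining with the degree increment to give $+2v_i$), the observation that zero vertices are untouched, and the disjointness of matching edges together establish the claim, and the deletion case follows by symmetry. The paper itself does not prove this theorem — it defers to the reference [ckk18a] — but your argument is exactly the same elementary substitution used in the paper's proofs of the adjacent "inserting soft nodes" and "addition of a soft node" theorems, so it matches the intended approach.
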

This is a second operator which shifts eigenvalues
by $\pm 2$. Examples are given in Fig. \ref{lad2}.
\begin{figure} [H]
\centerline{
\epsfig{file=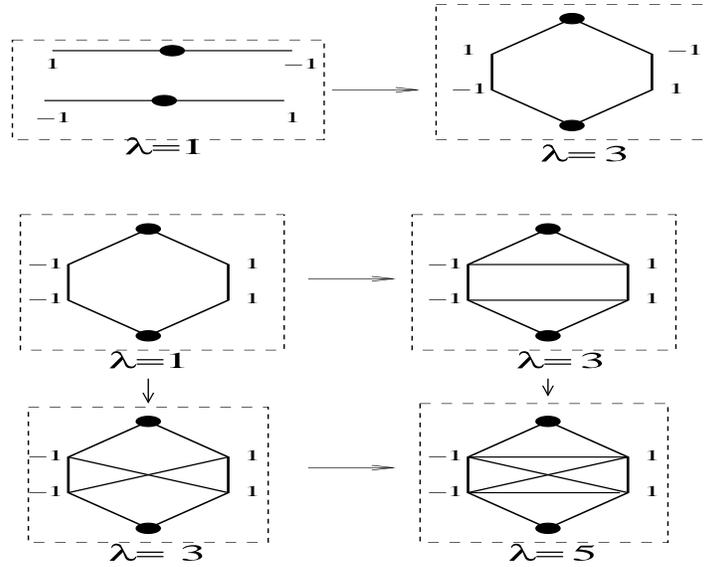,height=8.2 cm,width=12 cm,angle=0}
}
\caption{Examples of inserting a matching.}
\label{lad2}
\end{figure}

\subsection{Cartesian product}

The cartesian product $G\square H$ of two graphs $G = (V,E)$ and $H = ( W,F)$
has set of vertices $V \times W = \{ (v,w), v \in V, ~w \in W\}$.
It's set of edges is $\{\{(v_1,w_1),(v_2,w_2)\}\}$ such
that $v_1~v_2 \in V$ and $w_1w_2 \in W$.
We have the following result, see Merris \cite{merris}.
\begin{theorem}
If $x$ is an eigenvector of $G$ affording $\mu$ and
$y$ is an eigenvector of $H$ affording $\nu$, then 
the Kronecker product of $x$ and$y$ , $x \otimes y$
is an eigenvector of $G \square H$ for the eigenvalue $\mu+\nu$. 
\end{theorem}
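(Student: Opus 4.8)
The plan is to compute the Laplacian of the Cartesian product in terms of $L(G)$ and $L(H)$, and then apply the Kronecker product directly to the eigenvectors $x$ and $y$. The key fact is that the Laplacian of a Cartesian product decomposes additively: if $G$ has $n$ vertices and $H$ has $p$ vertices, then
\be\label{lap_product}
L(G\square H) = L(G)\otimes I_p + I_n \otimes L(H).
\ee
First I would verify \eqref{lap_product} by inspecting a single entry indexed by pairs $(v_1,w_1),(v_2,w_2)$. The degree of $(v,w)$ in $G\square H$ is $d_G(v)+d_H(w)$, which matches the diagonal of the right-hand side. Off the diagonal, an edge exists in $G\square H$ between $(v_1,w_1)$ and $(v_2,w_2)$ precisely when $v_1=v_2$ and $w_1w_2\in F$ (contributed by $I_n\otimes L(H)$), or when $w_1=w_2$ and $v_1v_2\in E$ (contributed by $L(G)\otimes I_p$); the two cases are mutually exclusive, so no cancellation or doubling occurs. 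This establishes \eqref{lap_product}.

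Next I would use the mixed-product property of the Kronecker product, $(A\otimes B)(C\otimes D)=(AC)\otimes(BD)$, to act on $x\otimes y$. Applying \eqref{lap_product}:
\begin{eqnarray}
L(G\square H)(x\otimes y) &=& (L(G)\otimes I_p)(x\otimes y) + (I_n\otimes L(H))(x\otimes y) \nonumber \\
&=& (L(G)x)\otimes y + x\otimes(L(H)y) \nonumber \\
&=& (\mu x)\otimes y + x\otimes(\nu y) = (\mu+\nu)(x\otimes y). \nonumber
\end{eqnarray}
Since $x\neq 0$ and $y\neq 0$ imply $x\otimes y\neq 0$, this shows $x\otimes y$ is an eigenvector of $L(G\square H)$ affording the eigenvalue $\mu+\nu$.

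The only genuine obstacle is the bookkeeping in verifying \eqref{lap_product}, i.e.\ being careful that the adjacency contributions from the two summands land in disjoint positions (they do, because a product edge changes exactly one coordinate). Everything after that is a one-line application of the mixed-product identity, which requires no hypotheses on the graphs beyond what is assumed. Note the argument uses only that $L(G)$ and $L(H)$ are the stated matrices, so it applies verbatim to generalized Laplacians as well.
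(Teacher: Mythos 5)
Your proof is correct and complete: the decomposition $L(G\square H)=L(G)\otimes I_p+I_n\otimes L(H)$ is verified entrywise, and the mixed-product identity then gives the eigenvalue $\mu+\nu$ immediately. The paper itself supplies no proof of this theorem (it only cites Merris and an illustrating figure), so your argument fills that gap with the standard and expected route; the only implicit convention you rely on is that the vertices of $G\square H$ are ordered so that the pair $(v,w)$ corresponds to the Kronecker-product index, which is harmless. Your closing remark about generalized Laplacians is fine for the unweighted product, though if the factor graphs carry weights one must first fix a convention for the weights of the product's edges before the identity $L(G\square H)=L(G)\otimes I_p+I_n\otimes L(H)$ can be asserted.
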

Fig. \ref{cart} illustrates the theorem.

\begin{figure} [H]
\centerline{
\epsfig{file=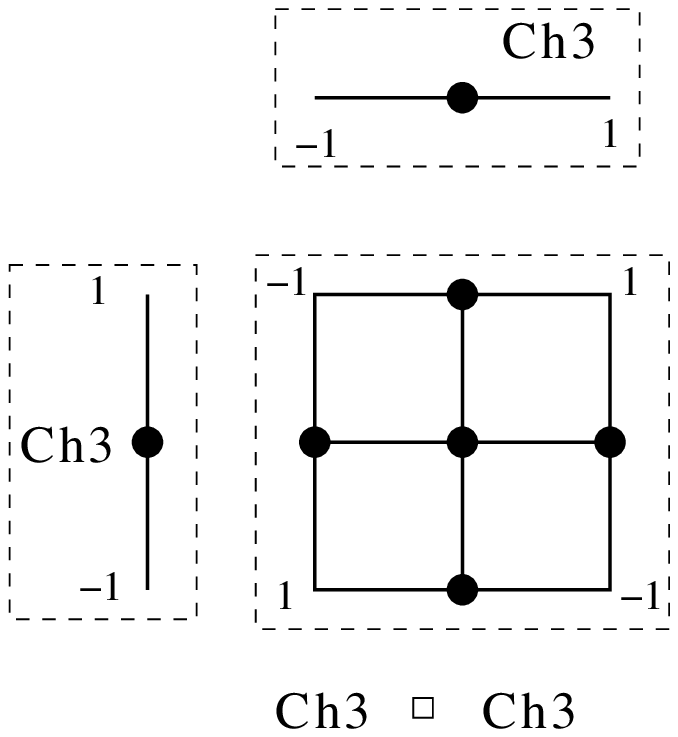,height=5 cm,width=6 cm,angle=0}
\hskip .3cm 
\epsfig{file=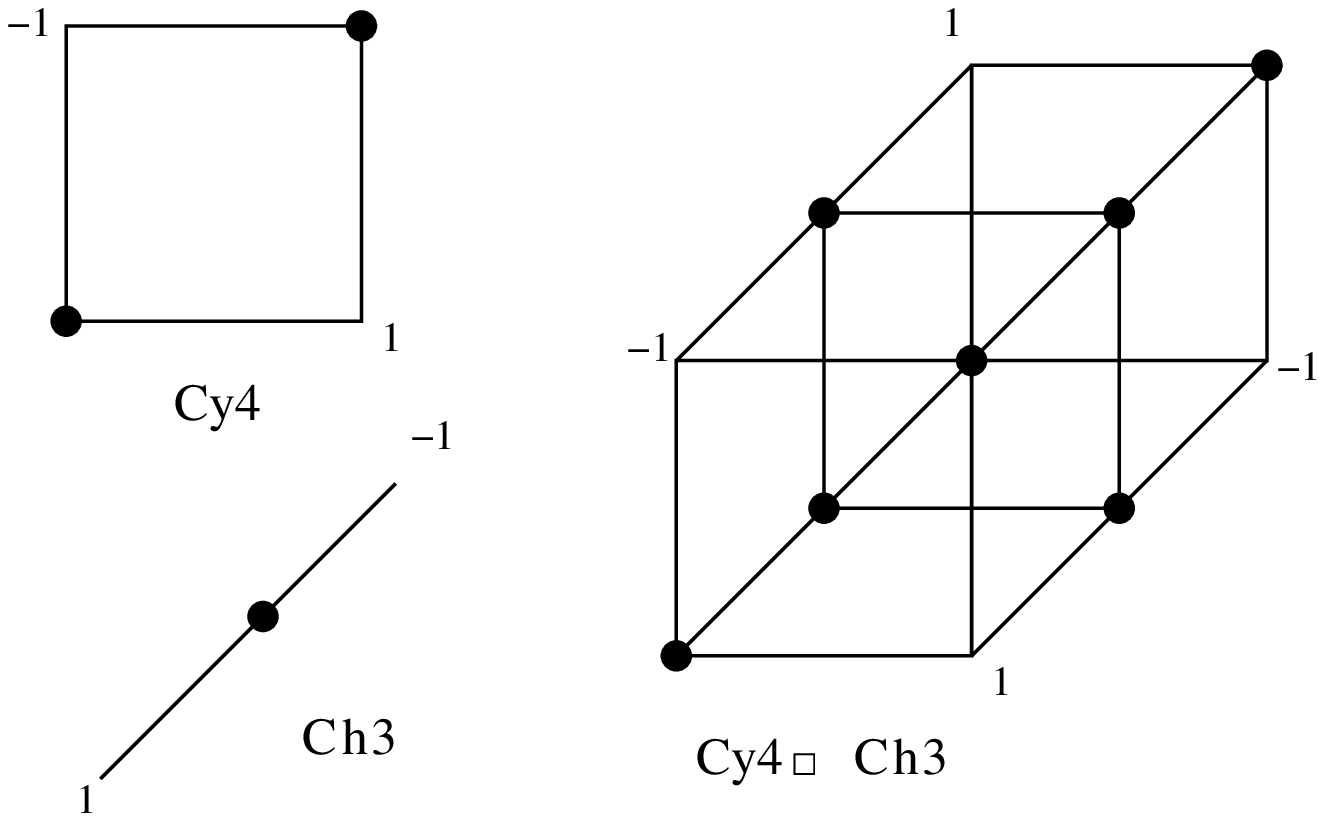,height=5 cm,width=6 cm,angle=0}
}
\caption{Cartesian product of two chains 3 (left) and of a
cycle 4 and a chain 3 (right).}
\label{cart}
\end{figure}

Important examples are the ones formed with the special graphs
considered above. There, one knows explicitly the eigenvectors
and eigenvalues. For example, the cartesian product
$C_n \times C_m$ of two chains $C_n$ and $C_m$ with $n$ and $m$
nodes respectively has eigenvalues
$$\lambda_{i,j} = \lambda_i + \lambda_j ,$$
where $\lambda_i$ (resp. $\lambda_j$) is an eigenvalue for $C_n$
(resp. $C_m$). The eigenvectors are
$$v^{i,j}=\cos [{\pi (i-1) \over  n}(p -{1\over 2}) ]
\cos [{\pi (j-1) \over  m}(q -{1\over 2}) ],$$
where $ i,p \in \{1,\dots ,n\},~~~j,q \in \{1,\dots ,m\}$.

\subsection{Graph complement}

We recall the definition of the complement of a graph $G$.
\begin{definition}[{\rm Complement of a graph }]
\label{complement}
Given a graph $G(V,E)$ with $n$ vertices, its complement $G^c$ is the
graph 
$G^c(V,E^c)$ where $E^c$ is the complement of $E$ in the
set of edges of the complete graph $K_n$. 
\end{definition}
We have the following property, see for example \cite{crs01}.
\begin{theorem}
If $x$ is an eigenvector of a graph $G$ with $n$ vertices
affording $\lambda \neq 0$, then
$x$ is an eigenvector of $G_c$ affording $n-\lambda$.
\end{theorem}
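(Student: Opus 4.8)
The plan is to exploit the identity relating the Laplacians of $G$, its complement, and the complete graph. First I would observe that by the very definition of $G^c$ (Definition \ref{complement}), every pair of distinct vertices is an edge of exactly one of $G$, $G^c$, so that the off-diagonal entries of $L(G)$ and $L(G^c)$ sum to $-1$, while the diagonal entries (the degrees) sum to $n-1$. Hence
$$L(G) + L(G^c) = L(K_n) = n I_n - \mathbf{1},$$
where $\mathbf{1}$ denotes the $(n,n)$ all-ones matrix, using the expression for $L(K_n)$ recorded in the subsection on cliques.

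Next I would use the hypothesis $\lambda \neq 0$. Since $G$ is connected, the eigenvalue $0$ is simple with eigenvector $v^1 = \mathbf 1$ (the all-ones vector), so any eigenvector $x$ affording $\lambda \neq 0$ is orthogonal to $\mathbf 1$, i.e. $\sum_k x_k = 0$. This is exactly the statement that the matrix $\mathbf{1}$ annihilates $x$: $\mathbf{1}\, x = 0$.

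Then the computation is immediate:
$$L(G^c)\, x = \bigl(n I_n - \mathbf{1} - L(G)\bigr) x = n x - \mathbf{1}\,x - L(G) x = n x - 0 - \lambda x = (n-\lambda)\, x,$$
so $x$ is an eigenvector of $L(G^c)$ affording $n - \lambda$, as claimed.

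There is no real obstacle here; the only point requiring a word of care is the justification that $\mathbf 1 x = 0$, which rests on the connectedness of $G$ (in force throughout the paper) and is precisely why the hypothesis $\lambda \neq 0$ is needed. It is worth remarking that $G^c$ need not be connected, but this does not affect the argument, since we only verify the eigenvalue equation directly.
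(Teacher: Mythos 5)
Your proof is correct and complete. The paper states this theorem without proof, merely citing \cite{crs01}, and your argument --- the identity $L(G)+L(G^c)=L(K_n)=nI_n-\mathbf{1}$ combined with $\mathbf{1}\,x=0$ for any eigenvector affording $\lambda\neq 0$ --- is precisely the standard derivation the paper implicitly relies on (it even records $L(K_n)=nI_n-\mathbf{1}$ in the subsection on cliques). One minor remark: the orthogonality $\sum_k x_k=0$ needs only the symmetry of $L$ and $\lambda\neq 0$, not the connectedness of $G$, so your argument is in fact slightly more general than you claim.
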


An example is shown in Fig. \ref{comp}. 
The eigenvalues and eigenvectors are given in table \ref{tcomp}.
\begin{figure} [H]
\centerline{
\epsfig{file=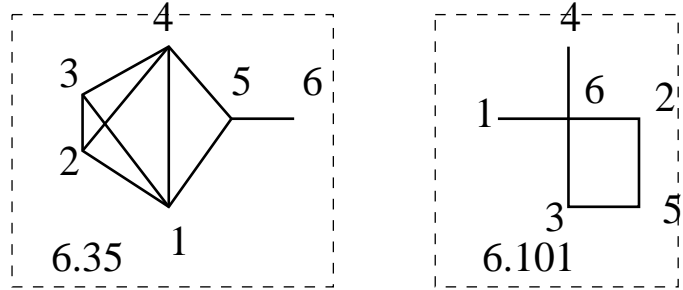,height=5 cm,width=10 cm,angle=0}
}
\caption{Graph 6.35 (left) in the classification \cite{crs01} and its
complement 6.101 (right). }
\label{comp}
\end{figure}
\begin{table} [H]
\centering
\begin{tabular}{|l|c|c|c|c|c|r|}
\hline
6.35   &5.2361  &  5.   &  4     & 3        &  0.7639     & 0 \\
       &        &       &        &          &         &  \\ \hline
6.101  & 0.7639 &  1    &  2     & 3        &  5.2361     & 0 \\
       &        &       &        &          &         &   \\ \hline
  &   0.51167 & 0.70711 & 0.     &  0.18257 &-0.19544 & 0.40825 \\
  & -0.31623  & 0.      & 0.70711& -0.36515 &-0.31623 & 0.40825 \\
  & -0.31623  & 0.      &-0.70711& -0.36515 &-0.31623 & 0.40825 \\
  & 0.51167   &-0.70711 & 0.     &  0.18257 &-0.19544 & 0.40825 \\
  & -0.51167  & 0.      & 0.     &  0.73030 & 0.19544 & 0.40825 \\
  & 0.12079   & 0.      & 0.     & -0.36515 & 0.82790 & 0.40825 \\ \hline
\end{tabular}
\caption{\small\em Eigenvalues (top lines) and eigenvectors for
the two complementary graphs 6.35 and 6.101 shown in Fig. \ref{comp}}
\label{tcomp}
\end{table}

Many times, $G_c$ is not connected. An example where $G_c$
is connected is the cycle 6....

\section{$\lambda$-soft graphs }

\subsection{Definitions and properties }

We introduce the notions of $\lambda$, $\lambda$ soft and $\lambda$ soft minimal
graphs. The transformations of the previous section will enable us
to prove the relation between these two types of graphs.
\begin{definition}
A graph $G$ affording an eigenvector $X$ for an eigenvalue
$\lambda$ is $\lambda$. 
\end{definition}
\begin{definition}
A $\lambda$ graph $G$ affording an eigenvector $X$ for the eigenvalue 
$\lambda$ is $\lambda$ soft if one of the entries of $X$ is zero.
\end{definition}
\begin{definition}
A graph $G$ affording an eigenvector $X$ for an eigenvalue 
$\lambda$ is $\lambda$ minimal if it is $\lambda$ 
and minimal in the sense of inclusion. 
\end{definition}
Clearly, for a given $\lambda$, there is at least one $\lambda$ minimal graph.
As an example the 1 soft minimal graph is shown below. 
\begin{figure} [H]
\centerline{ \epsfig{file=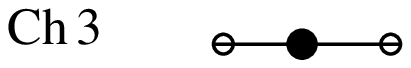,height=1 cm,width=6 cm,angle=0} }
\end{figure}

\subsection {$\lambda$ subgraph}

In the following section, we study the properties of a $\lambda$ subgraph $G$
included in a $\lambda$ graph $G"(V",E")$. 
Consider two graphs $G(V,E)$ with $n$ vertices and $G'(V"-V,E')$
such that $E$ only connects elements of $V$ and $E'$ only
connects elements of $V'$.
Assume two graphs $G(n)$ and $G'(n')$ are included in a large graph $G"$
and are such that $G(V,E)$
Assume $p$ vertices of $G$ are linked to $p'$ vertices of $G'$.
We label the $p$ vertices of $G$, $n-p+1, \dots, n$ and the
$p'$ vertices of $G'$, $1,\dots,p'$.
We have
\begin{eqnarray}  
\label{eigsmall} L X = \lambda X, \\
\label{eiglarge} 
L" X" = \lambda X" , 
\end{eqnarray}
where $L"$ is the graph Laplacian for the large graph $G"$;  $L"$ can be written
as 
$$L" = \begin{pmatrix} L & 0 \\ 0 & L'\end{pmatrix}
        + \begin{pmatrix} 0 & 0 & 0 & 0 \\ 0 & a & -b & 0 \\
0 & -b^T & c & 0 \\ 0 & 0 & 0 & 0 \\
\end{pmatrix}  . $$
A first result is
\begin{theorem} The square matrix $\delta = \begin{pmatrix} a  & -b\\
-b^T & c \\
\end{pmatrix}$ is a graph Laplacian.
\end{theorem}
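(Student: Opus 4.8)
\noindent The plan is to recognise $\delta$ as the Laplacian of the bipartite graph $\widehat G$ that carries exactly the edges of $G''$ joining a vertex of $G$ to a vertex of $G'$, these edges kept at the weights they have in $G''$ and the vertex set of $\widehat G$ being the $p+p'$ endpoints of those edges. Recall that a symmetric matrix $M$ is a (generalised) graph Laplacian exactly when all its off-diagonal entries are $\le 0$ and each of its rows sums to zero, i.e. $M_{ii}=-\sum_{k\ne i}M_{ik}$. So the whole argument reduces to checking these two properties for $\delta$ and, on the way, to identifying the blocks $a,b,c$ correctly.

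\emph{Identifying the blocks.} Here I would use that the edge set of $G''$ is the disjoint union $E\cup E'\cup F$, where $F$ is the set of ``coupling'' edges between $G$ and $G'$, and that a Laplacian is additive over a partition of the edge set; hence $L''-\begin{pmatrix} L & 0\\ 0 & L'\end{pmatrix}$ is the Laplacian of the graph $(V'',F)$. With the labelling of the excerpt --- the $p$ vertices of $G$ meeting $F$ are $n-p+1,\dots,n$ and the $p'$ vertices of $G'$ meeting $F$ are $1,\dots,p'$ --- every edge of $F$ runs between these two groups (by definition of the two groups) and all other vertices are isolated in $(V'',F)$; this forces the outer blocks of the correction matrix to vanish, as written, and lets me read off $b_{ij}=w_{ij}\ge 0$, the weight of the edge (if present) between vertex $n-p+i$ of $G$ and vertex $j$ of $G'$, together with $a=\mathrm{diag}(b\,\mathbf 1)$ and $c=\mathrm{diag}(b^{T}\mathbf 1)$, the diagonal matrices of degree increments caused by $F$ (diagonal because $F$ has no edge inside either group). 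The same identifications can also be obtained directly from ``$L''$ is the graph Laplacian of $G''$, $L=L(G)$, $L'=L(G')$'' by comparing the boundary rows and columns of both sides, which is the route I would actually write out in full.

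\emph{Verification.} Once $a,c$ are known to be diagonal and $b$ entrywise nonnegative, the conclusion is immediate: $\delta$ is symmetric, since $a,c$ are diagonal and $(-b)^{T}=-b^{T}$; its off-diagonal entries are either $0$, inside the $a$- or $c$-block, or $-b_{ij}\le 0$, across the two blocks; and a row in the first block sums to $a_{ii}-\sum_j b_{ij}=(b\mathbf 1)_i-(b\mathbf 1)_i=0$, a row in the second block to $c_{jj}-\sum_i b_{ij}=0$, i.e. $\delta_{ii}=-\sum_{k\ne i}\delta_{ik}$ throughout. Hence $\delta$ is a graph Laplacian --- indeed $\delta=L(\widehat G)$ with $\widehat G$ the simple, bipartite coupling graph. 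The only delicate point, and it is a matter of interpretation rather than a genuine obstacle, is that $\widehat G$ need not be connected --- only $G''$ is assumed connected --- so ``graph Laplacian'' is here meant in the paper's generalised sense; if a connected graph is wanted one passes to a connected component of $\widehat G$. The essential work is therefore just the bookkeeping that ties $a$, $b$, $c$ to the coupling degrees and weights.
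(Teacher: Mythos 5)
Your proof is correct and follows essentially the same route as the paper's: identify $a,c$ as the diagonal degree-increment matrices and $b$ as the (nonnegative) coupling-weight matrix, then check that the off-diagonal entries of $\delta$ are nonpositive and that its rows sum to zero via $a\hat 1_p=b\hat 1_{p'}$ and $c\hat 1_{p'}=b^T\hat 1_p$. The paper's proof is just a terser version of your verification step; your added remarks (that $\delta$ is the Laplacian of the bipartite coupling graph, which need not be connected) are sensible but not needed for the claim.
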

\begin{proof}
The submatrices $a,b,c$ have respective sizes $a(p,p),~b(p,p'),~c(p',p')$,
$a$ and $c$ are diagonal and verify
\be\label{abc}
a_{ii}= \sum_{j=1}^{p'} b_{ij}, ~~c_{ii}= \sum_{j=1}^{p} b_{ji}  . \ee
In other words 
$$a {\hat 1}_p = b {\hat 1}_{p'},~~~c {\hat 1}_{p'} = b^T {\hat 1}_{p}, $$
where ${\hat 1}_{p}$ is a $p$ column vector of 1.
\end{proof}

At this point, we did not assume any relation between the eigenvectors
$X$ for $G$ and $X"$ for $G"$. We have the following 
\begin{theorem} The eigenvalue relations (\ref{eigsmall},\ref{eiglarge})
imply either $X=X"(1:n)$ or $X(1:n-p)=0$.
\end{theorem}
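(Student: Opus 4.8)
The plan is to read the relation $L''X''=\lambda X''$ block by block, separating the vertices of $G''$ into the $n-p$ interior vertices of $G$ (which by hypothesis have no neighbour in $G'$), the $p$ interface vertices of $G$, and the $n'$ vertices of $G'$. Write $X''=(u;\,v;\,y)^{T}$ with $u\in\mathbb R^{\,n-p}$, $v\in\mathbb R^{\,p}$, $y\in\mathbb R^{\,n'}$, so that $X''(1:n)=(u;v)^{T}$, and write $X=(\xi;\eta)^{T}$ together with $L=\left(\begin{smallmatrix}L_{11}&L_{12}\\ L_{21}&L_{22}\end{smallmatrix}\right)$ in the same interior/interface splitting. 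The block identity for $L''$ quoted just above the statement leaves $L_{11}$ untouched in the top-left corner and only modifies the interface block of $L$, by adding $a$ on the diagonal and the off-diagonal coupling $-b$ to $G'$.

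The first $n-p$ scalar equations of $L''X''=\lambda X''$ then involve no coordinate of $G'$ whatsoever, and read exactly $L_{11}u+L_{12}v=\lambda u$; the first $n-p$ equations of $LX=\lambda X$ read $L_{11}\xi+L_{12}\eta=\lambda\xi$. Subtracting, the discrepancy $W:=X''(1:n)-X$ satisfies the single identity $(L_{11}-\lambda I)(u-\xi)=-L_{12}(v-\eta)$. This is the fulcrum of the argument: it expresses the interior part $u-\xi$ of $W$ purely through the interface part $v-\eta$ and the principal submatrix $L_{11}$ of $L$ carried by the interior vertices. I would then bring in the $p$ interface equations of the two systems, which subtract to $L_{21}(u-\xi)+(L_{22}-\lambda I)(v-\eta)=bw-av$ (with $w$ the sub-block of $y$ on the $p'$ interface vertices of $G'$), and use the relations $a\hat1_p=b\hat1_{p'}$, $c\hat1_{p'}=b^{T}\hat1_p$ of the preceding theorem to keep control of the right-hand side.

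To finish I would split on whether $\lambda$ is an eigenvalue of $L_{11}$. If it is not, $L_{11}-\lambda I$ is invertible, so $u-\xi=-(L_{11}-\lambda I)^{-1}L_{12}(v-\eta)$ is entirely determined by $v-\eta$; substituting this into the interface equations collapses the problem to a relation on $v-\eta$ alone, whose only possibilities are $v=\eta$ — whence $u=\xi$, i.e. $X=X''(1:n)$ after the obvious rescaling that matches the two eigenvectors on the interface — or $\eta=0$, which by the interior equations forces $\xi=0$ as well, i.e. $X(1:n-p)=0$. The genuinely delicate case, and the one I expect to be the main obstacle, is $\lambda\in\mathrm{spec}(L_{11})$: there $L_{11}-\lambda I$ has a nontrivial kernel, $u-\xi$ is no longer pinned down by $v-\eta$, and a kernel contribution to $W$ is itself an eigenvector of the interior subgraph for $\lambda$. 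Here I would argue, using that $G$ is connected and that $X$ is a bona fide eigenvector of $L(G)$ — together with Merris's soft-node theorem recalled earlier, applied to the relevant degree-$(n-1)$ vertices — that such a contribution can only be accommodated when $X$ already vanishes on every interior vertex, which is once more the second alternative; excluding all the intermediate configurations is the step that will require the most care.
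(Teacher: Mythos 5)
Your block set-up (interior/interface/exterior splitting of $X''$, subtracting the two eigenvalue relations to get $(L_{11}-\lambda I)(u-\xi)=-L_{12}(v-\eta)$) is sound, and it is in fact more systematic than what the paper itself does: the paper's proof is two lines, a rank argument for the special case $p=1$ with $\lambda$ a simple eigenvalue (${\rm rank}(L-\lambda I)=n-1$, so the kernel of the first $n-1$ rows of $L-\lambda I$ is essentially one-dimensional), after which it explicitly ``admits the result for $p>1$''. So you are not being measured against a complete argument here.

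That said, your proposal has concrete gaps beyond mere incompleteness. First, in the case $\lambda\notin{\rm spec}(L_{11})$ you assert that the interface relation ``collapses'' to the alternatives $v=\eta$ or $\eta=0$, but nothing you wrote derives this: the subtracted interface equation still contains the unknown block $w$ of $X''$ on the $G'$ side, and the identities $a\,{\hat 1}_p=b\,{\hat 1}_{p'}$, $c\,{\hat 1}_{p'}=b^{T}{\hat 1}_p$ control only row sums, not the solution set. Second, your dichotomy does not match the theorem's: the second alternative of the statement is $X(1:n-p)=0$, i.e.\ $\xi=0$ with $\eta\neq0$ (eigenvector supported on the interface), whereas your branch ``$\eta=0$'' together with invertibility of $L_{11}-\lambda I$ forces $\xi=0$ as well, hence $X=0$, which is impossible for an eigenvector --- that branch is vacuous rather than the desired conclusion. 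Third, in the genuinely hard case $\lambda\in{\rm spec}(L_{11})$ you appeal to Merris's theorem on vertices of degree $n-1$, but the interface vertices have no reason to be dominating vertices of $G$, so that theorem does not apply; this case, which is exactly where the second alternative lives, is left entirely open (as, to be fair, it is in the paper). Finally, the statement can only hold up to rescaling of $X''$ (replacing $X''$ by $2X''$ destroys $X=X''(1:n)$ without creating the second alternative), so a careful proof must fix that normalization at the outset rather than invoke ``the obvious rescaling'' at the end.
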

\begin{proof}
For $p=1$ and $\lambda$ a single eigenvalue, ${\rm rank}(L-\lambda I)=n-1$ so either $X=X"(1:n)$ or $X(1:n-1)=0$. \\
We admit the result for $p>1$.
\end{proof}

We can then assume that the eigenvectors of $L"$ have the form
$$L" \begin{pmatrix} X \\ X' \end{pmatrix}= \lambda \begin{pmatrix} X \\ X' \end{pmatrix} , $$  
where $L X = \lambda X$.
Substituting $L"$, we get
$$\lambda \begin{pmatrix} X \\ X' \end{pmatrix}
=  \begin{pmatrix} L & 0 \\ 0 & L'\end{pmatrix} \begin{pmatrix} X \\ X' \end{pmatrix}
+ \begin{pmatrix} 0 & 0 & 0 & 0 \\ 0 & a & -b & 0 \\
0 & -b^T & c & 0 \\ 0 & 0 & 0 & 0\\
\end{pmatrix} \begin{pmatrix} X \\ X' \end{pmatrix} .$$
Using the relation (\ref{eigsmall}) we obtain
\begin{eqnarray}
\begin{pmatrix} 0 & 0 \\ 0 & a \end{pmatrix} X
+ \begin{pmatrix} 0 & 0 \\ -b & 0 \end{pmatrix} X'=0,  \\
L' X' -\begin{pmatrix} 0 & b^T \\ 0 & 0 \end{pmatrix} X
+ \begin{pmatrix} c & 0 \\ 0 & 0 \end{pmatrix} X' = \lambda X' .
\end{eqnarray}
There are $p$ non trivial equations in the first matrix equation
and $p'$ in the second one. 
Using an array notation (like in Fortran), the system above can be written as
\begin{eqnarray}
a X(n-p+1:n)-b X'(1:p')=0, \\
-b^T X(n-p+1:n)+c X'(1:p') + (L'X')(1:p') =\lambda X'(1:p'), \\
(L'X')(p'+1:n') =\lambda X'(p'+1:n'), 
\end{eqnarray}
Extracting $X$ from the first equation, we obtain 
\be \label{gpg1} X(n-p+1:n)= a^{-1} b X'(1:p') , \ee 
and substituting in the second equation yields the closed system in $X'$
\begin{eqnarray}
\label{gpg2} (-b^T a^{-1} b +c)X'(1:p')  + (L'X')(1:p')=
\lambda X'(1:p'), \\
\label{gpg3} (L'X')(p'+1:n')=\lambda X'(p'+1:n'),
\end{eqnarray}
where we used the fact that the matrix $a$ of the degrees of the connections
is invertible by construction.

\begin{theorem} The matrix
$$\Delta \equiv -b^T a^{-1} b +c , $$
is a generalized graph Laplacian: it is a Laplacian of a weighted graph.
Its entries are rationals and not necessarily integers.  
\end{theorem}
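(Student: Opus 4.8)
The plan is to verify directly that $\Delta = -b^T a^{-1} b + c$ satisfies the two defining structural properties of a (generalized) graph Laplacian: symmetry, and the zero-row-sum property $\Delta \hat 1_{p'} = 0$, together with the sign pattern (nonpositive off-diagonal, nonnegative diagonal) that makes it the Laplacian of a nonnegatively weighted graph. First I would record that $c$ is symmetric (it is diagonal) and that $b^T a^{-1} b$ is symmetric because $a$ is diagonal hence $a^{-1}$ is symmetric; therefore $\Delta$ is symmetric. The rationality claim is then immediate: $a$, $b$, $c$ have integer entries by construction (they are pieces of the integer Laplacian $L''$), $a^{-1}$ has rational entries since $a$ is diagonal with positive integer entries, and products and sums of rational matrices are rational; integrality can fail precisely because of the $a^{-1}$ factor, which is the only place denominators enter.

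The heart of the argument is the zero-row-sum identity. Using the relations (\ref{abc}), i.e. $a\hat 1_p = b\hat 1_{p'}$ and $c\hat 1_{p'} = b^T \hat 1_p$, I would compute
$$\Delta \hat 1_{p'} = -b^T a^{-1} b \hat 1_{p'} + c \hat 1_{p'} = -b^T a^{-1} (a \hat 1_p) + b^T \hat 1_p = -b^T \hat 1_p + b^T \hat 1_p = 0.$$
So $\hat 1_{p'}$ is in the kernel of $\Delta$, which is the conservation-law property characterizing a generalized Laplacian. Combined with symmetry, this already shows $\Delta$ has the algebraic form $D' - A'$ with row sums zero.

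What remains — and this is the step I expect to be the main obstacle — is to check the sign pattern: that the off-diagonal entries of $\Delta$ are $\le 0$ and the diagonal entries $\ge 0$, so that $\Delta$ genuinely is the Laplacian of a graph with nonnegative edge weights rather than merely a symmetric matrix with zero row sums. Write $(b^T a^{-1} b)_{k\ell} = \sum_{i=1}^{p} b_{ik} b_{i\ell} / a_{ii}$, where each $b_{ik}\ge 0$ (entries of $b$ come from the coupling block, up to sign) and $a_{ii}>0$. For $k\ne\ell$, since $c$ is diagonal, $\Delta_{k\ell} = -\sum_i b_{ik}b_{i\ell}/a_{ii} \le 0$, as desired. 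For the diagonal, one must use $c_{\ell\ell} = \sum_i b_{i\ell}$ (from (\ref{abc})) and the Cauchy--Schwarz-type bound $\sum_i b_{i\ell}^2/a_{ii} \le \sum_i b_{i\ell}$, which holds because $b_{i\ell} \le a_{ii}$ (each $a_{ii}$ is the full degree-sum $\sum_j b_{ij} \ge b_{i\ell}$), giving $b_{i\ell}^2/a_{ii} \le b_{i\ell}$; summing yields $\Delta_{\ell\ell} = c_{\ell\ell} - \sum_i b_{i\ell}^2/a_{ii} \ge 0$. Equivalently, once the off-diagonals are nonpositive and the row sums vanish, $\Delta_{\ell\ell} = -\sum_{k\ne\ell}\Delta_{k\ell} \ge 0$ follows automatically, so the only real content is the off-diagonal sign, which in turn relies on the entries of $b$ being of one sign after accounting for the $-b$ placement in $L''$. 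I would make that sign bookkeeping explicit, since it is the one place where the combinatorial meaning of $a,b,c$ (rather than just the identities in (\ref{abc})) is used.
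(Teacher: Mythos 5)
Your proposal is correct and follows essentially the same route as the paper: symmetry of $\Delta$ plus the row-sum computation $\Delta \hat 1_{p'} = -b^T a^{-1} a \hat 1_p + b^T \hat 1_p = 0$ using the identities in (\ref{abc}). You go one step further than the paper by also verifying the sign pattern --- that the off-diagonal entries $-\sum_i b_{ik}b_{i\ell}/a_{ii}$ are nonpositive because $b\ge 0$ and $a_{ii}>0$, whence the diagonal is nonnegative --- which the paper's proof omits but which is genuinely needed to conclude that $\Delta$ is the Laplacian of a \emph{nonnegatively} weighted graph rather than merely a symmetric matrix with zero row sums.
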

\begin{proof}
To prove this, note first that $\Delta$ is obviously symmetric.
We have
$$\Delta {\hat 1}_{p'} = -b^T a^{-1} b {\hat 1}_{p'} + c {\hat 1}_{p'}
= -b^T a^{-1} a {\hat 1}_{p} + b^T {\hat 1}_{p}  = 0.$$
This shows that the each diagonal element of $\Delta$ is equal to the
sum of it's corresponding row so that $\Delta$ is a graph Laplacian.
\end{proof}
From theorem (\ref{eintirra}), the eigenvalues of $\Delta$
are integers or irrationals and correspond to eigenvectors with
integer or irrational components.

We then write equations (\ref{gpg2},\ref{gpg3}) as
\be\label{eigy} ({\bar \Delta} + L') X' = \lambda X' , \ee
where
$$ {\bar \Delta} = \begin{pmatrix} \Delta & 0 \\ 0 & 0 \end{pmatrix}$$
This is an eigenvalue relation for the graph Laplacian $({\bar \Delta} + L')$.
Four cases occur. 
\begin{itemize}
\item [(i)] $\lambda=0$ then $X'$ is a vector of equal components and $X$ also.
\item [(ii)] $\lambda\neq 0$ is an eigenvalue of $L'$. Then one has
the following 
\begin{theorem}\label{sliart} 
Assume a graph $G"$ is $\lambda$ for an eigenvector $X"=(X,X')^T$ 
and contains a $\lambda$ graph $G$ for the eigenvector $X$. 
Consider the graph $G'$ with vertices
$V(G")-V(G)$ and the corresponding edges in $G"$. \\
If $G'$ is $\lambda$ then $G"$ is obtained from $G$
using the articulation or link transformations. 
\end{theorem}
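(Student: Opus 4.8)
The plan is to analyze the closed system \eqref{eigy} for $X'$ under the hypothesis that $G'$ is also $\lambda$, i.e.\ that there is an eigenvector $Y'$ of $L'$ with $L'Y' = \lambda Y'$. First I would combine \eqref{eigy} with the eigenvalue relation $L'X'=\lambda X'$ that now holds on all of $G'$: subtracting, we obtain $\bar\Delta X' = 0$, hence $\Delta X'(1{:}p') = 0$. So the restriction of $X'$ to the $p'$ coupling vertices lies in the kernel of $\Delta = -b^Ta^{-1}b + c$, which by the previous theorem is a generalized graph Laplacian. Its kernel is spanned by the all-ones vector on each connected component of the weighted graph it defines; in particular, if that graph is connected, $X'(1{:}p')$ is a constant vector $\mu\,\hat 1_{p'}$.

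Next I would feed this back into \eqref{gpg1}: $X(n-p+1{:}n) = a^{-1}b\,X'(1{:}p') = \mu\, a^{-1}b\,\hat 1_{p'} = \mu\,\hat 1_p$, using the identity $a\hat 1_p = b\hat 1_{p'}$ from \eqref{abc}. So the $p$ vertices of $G$ that carry the coupling all have the \emph{same} value $\mu$ in the eigenvector $X$. Symmetrically, one shows the $p'$ coupling vertices of $G'$ all carry the value $\mu$ as well. The upshot is that $X"$, restricted to the coupling vertices on both sides, is constant equal to $\mu$, and $X"=(X,X')^T$ with $LX=\lambda X$, $L'X'=\lambda X'$.

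From here the structural conclusion follows by cases on whether $\mu = 0$ or $\mu \neq 0$. If $\mu = 0$, then every coupling vertex is a soft node of both $G$ and $G'$; adding the coupling edges one at a time between a soft node of $G$ and a soft node of $G'$ is exactly a sequence of articulation moves (each new edge attaches to a vertex whose eigenvector value is $0$, and when $p=p'=1$ this is literally the articulation transformation of Section~4.2, possibly followed by soldering when several such edges share a vertex). If $\mu \neq 0$, then on both sides the coupling vertices have equal (nonzero) value $\mu$; each coupling edge $e=ij$ then joins two vertices with $x_i = x_j = \mu$, so by the \textbf{Link} theorem of Merris each such edge may be added or deleted while preserving $\lambda$ and the eigenvector. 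Hence $G"$ is obtained from the disjoint union $G \sqcup G'$ — and, since $G'$ is $\lambda$, ultimately from $G$ — by a sequence of link (and, in the $\mu=0$ case, articulation/soldering) transformations. I would present the $p=p'=1$ case in full as the model and indicate that the general $p,p'$ case is obtained by decomposing the coupling into single edges.

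The main obstacle is the step that identifies $\ker\Delta$: to get a clean statement I need the weighted graph associated with $\Delta$ to be connected (or at least to argue component-by-component), and I need to control the case where the coupling induces several components, which forces $X'(1{:}p')$ to be only piecewise constant rather than globally constant. Handling that carefully — and checking that in each component the corresponding sub-collection of coupling edges is still built by links or articulations — is where the real work lies; the rest is the bookkeeping of \eqref{abc}, \eqref{gpg1} and \eqref{eigy} already set up above.
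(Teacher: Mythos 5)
Your proposal is correct in substance but follows a genuinely different route from the paper's proof, and the difference is worth noting. The paper argues that since $X'\neq 0$ is a $\lambda$-eigenvector of $L'$ with $\lambda\neq 0$, it cannot lie in $\mathrm{Null}(\Delta)$, and therefore forces $\Delta=0$; from $c=b^Ta^{-1}b$ and the diagonality of $c$ it then extracts a strong combinatorial conclusion, namely that each coupling vertex of $G$ meets exactly one coupling vertex of $G'$, so $p=p'$, all coupling degrees equal $1$, and the coupling is a perfect matching between equal-valued vertices (hence a sequence of links); the case $X'\equiv 0$ is set aside at the start as the articulation case. You instead keep $\Delta$ arbitrary, use that it is a weighted Laplacian with nonnegative edge weights so that $\ker\Delta$ consists of (component-wise) constant vectors, deduce $X'(1{:}p')=\mu\hat 1_{p'}$ and, via $a\hat 1_p=b\hat 1_{p'}$, that the $G$-side coupling vertices also carry the value $\mu$, and conclude by Link (or articulation when $\mu=0$). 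Your route is in fact more careful at the one shaky point of the paper's argument: the paper's inference ``$X'$ not constant $\Rightarrow X'\notin\mathrm{Null}(\Delta)$'' confuses the full vector $X'$ with its restriction $X'(1{:}p')$, which can perfectly well be constant (and hence in $\ker\Delta$) even when $\Delta\neq 0$; your analysis covers that case and shows the conclusion (all coupling edges join equal-valued vertices, hence Link) survives without needing $\Delta=0$ or the matching structure. What you lose relative to the paper is the stronger structural output ($p=p'$ and a perfect matching of coupling edges). The disconnected-$\Delta$ issue you flag as the remaining obstacle actually resolves itself: two $G'$-coupling vertices lie in the same $\Delta$-component precisely when some $G$-vertex is joined to both, so each $G$-coupling vertex $k$ sees only one component and inherits its constant $\mu_c$ through $X_k=a_{kk}^{-1}\sum_i b_{ki}X'_i=\mu_c$; every coupling edge therefore still joins two vertices of equal value and Link applies edge by edge.
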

\begin{proof}
Since $\lambda\neq 0$ is an eigenvalue of $L'$, we can choose
$X'$ an eigenvector for $\lambda$ so that 
$L' X' = \lambda X'$, then $\Delta X' =0$. \\
A first possibility is $X'=0$, this corresponds to an articulation
between $G$ and $G'$. \\
If $X' \neq 0$, $L' X' = \lambda X'$, implies that $X'$ is not 
a vector of equal components so that $X' \notin {\rm Null} (\Delta)$.
The only possibility for $\Delta X' =0$ is $\Delta=0$ so that
$$c = b^T a^{-1} b . $$ 
The term $ (b^T a^{-1} b)_{ij}$ is
$$ (b^T a^{-1} b)_{ij} = \sum_{k=1}^p {b_{ki} b_{kj} \over a_{kk}} . $$
Since the matrix $c$ is diagonal, we have
$$\sum_{k=1}^p {b_{ki} b_{kj} \over a_{kk}} = 0, \forall i \neq j$$
Then $b_{ki} b_{kj} =0$ so that a vertex $k$ from $G$ is only connected to
one other vertex $i$ or $j$ from $G'$. Then $p=p'$.
This implies $a_{ii}=c_{ii}=1,  \forall i \in \{1,, \dots,p\}$.
The graphs $G$ and $G'$ are then connected by a number of edges
between vertices of same value.
\end{proof}

\item [(iii)] $\lambda\neq 0$ is not an eigenvalue of $L'$ and
 $L'$ and ${\bar \Delta}$ share a common eigenvector $X'$ for
eigenvalues $\lambda'$ and $\lambda-\lambda' >0$. \\
For $\lambda-\lambda'=1$, a possibility is to connect a soft node 
of $G$ to $G'$.
For $\lambda-\lambda'=p$ integer, a possibility is to connect $p$ 
soft nodes of $G$ to $G'$.\\
We conjecture that there are no other possibilities.

\item [(iv)] $\lambda\neq 0$ is not an eigenvalue of $L'$ and
$L'$ and ${\bar \Delta}$ have different eigenvectors.
Then there is no solution to the eigenvalue problem (\ref{eigy}). \\
To see this, assume the eigenvalues and eigenvectors of $L'$ and ${\bar \Delta}$ are
respectively $\nu_i, V^i$, $\mu_i,W^i$ so that
$$L' V^i = \nu_i V^i,~~{\bar \Delta} W^i = \mu_i W^i , ~~i=1,2,\dots n$$
The eigenvectors can be chosen orthonormal and we have $$ Q V = W Q $$
where $Q =(q_k^j)$ is an orthogonal matrix, $V$ and $W$ are the matrices
whose columns are respectively $V^i$ and $W^i$. We write
$$ W^j = \sum_k q_k^j V^k .$$

Assuming $X'$ exists, we can expand it as $X'= \sum_i \alpha_i V^i$ 
Plugging this expansion intro the relation
$({\bar \Delta} + L') X' = \lambda X'$ yields
$$\sum_i \left ( \alpha_i \nu_i V^i + \alpha_i \sum_j q_j^i \mu_j \sum_k q_k^j V^k\right ) = \sum_i \lambda \alpha_i \nu_i V^i$$
Projecting on a vector $V^m$ we get
$$ \alpha_m \nu_m  + \alpha_m \sum_j q_j^m \mu_j q_m^j  = \lambda \alpha_m \nu_m $$
A first solution is $\alpha_m=0, \forall m$ so that $X'=0$, an articulation.
If $\alpha_m \neq 0$ then we get the set of linear equations
linking the $\nu_i$ to the $\mu_i$.
$$ \sum_j q_j^m \mu_j q_m^j = (\lambda -1) \nu_m, ~~m=1,\dots n$$
Since $Q$ is a general orthogonal matrix, the terms
$q_j^m$ are irrational in general. Therefore we conjecture that there are
no solutions.
\end{itemize}

\subsection{Examples of $\lambda$ subgraphs}

Using simple examples, we illustrate the different scenarios
considered above.  
We first consider theorem (\ref{sliart}), see Fig. \ref{GGp}.
\begin{figure} [H]
\centerline{
\epsfig{file=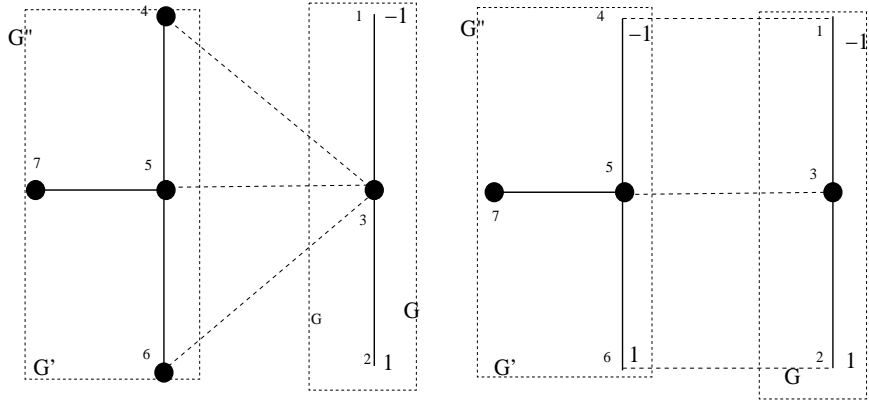,height=6 cm,width=12 cm,angle=0}
}
\caption{Two configurations where a graph $G$ is included in a 
larger graph $G"$ for the eigenvalue $1$.}
\label{GGp}
\end{figure}
Consider the configuration on the left of Fig. \ref{GGp}. We have
\be L = \begin{pmatrix} 
1 & -1 & 0   \\ 
0  &1 & 1  \\
-1 & -1 & 2  \\ 
\end{pmatrix},  ~~~~
L' = \begin{pmatrix}
1 & -1 & 0 & 0  \\ 
-1 & 3 & -1& -1  \\ 
0  & -1 & 1 & 0  \\ 
0  & -1 & 0 & 1  
\end{pmatrix}. \ee
Note that $L$ and $L'$ have 1 as eigenvalue.
Here $p=1,p'=3$ and
$$a=3, b=(1 ,1 ,1)^T, c = \begin{pmatrix}
1 & 0 & 0   \\
0 & 1 & 0  \\
0  &0 & 1
\end{pmatrix},$$
so that $$\Delta = \begin{pmatrix}
{2 \over 3} & -{1 \over 3} & -{1 \over 3} \\
-{1 \over 3} & {2 \over 3} & -{1 \over 3}  \\
-{1 \over 3} &-{1 \over 3} &  {2 \over 3}  
\end{pmatrix}.$$
The matrices ${\bar \Delta}$ and $L'$
have different eigenvectors for the same eigenvalue 1. 
Choosing $X'$ an eigenvector of $L'$ for the eigenvalue 1
yields $ {\bar \Delta}  X' = 0$.
The only solution is $X'=0$, this is an articulation.

For the configuration on the right of Fig. \ref{GGp} we have
$p=p'=3$. 
$$
a= \begin{pmatrix}
1 & 0 & 0   \\
0 & 1 & 0  \\
0  &0 & 1
\end{pmatrix} ,~~~
b = \begin{pmatrix}
1 & 0 & 0 & 0  \\
0 & 1 & 0 & 0 \\
0  &0 & 1 & 0
\end{pmatrix} ,~~~
c = \begin{pmatrix}
1 & 0 & 0   \\
0 & 1 & 0  \\
0  &0 & 1  
\end{pmatrix},
$$
so that $\Delta = \begin{pmatrix} 
0 & 0 & 0 \\
0 & 0 & 0 \\
0 & 0 & 0 
\end{pmatrix}.
$
We have 
\begin{eqnarray}
\label{link1} L X=  1 X, \\
\label{link2} L"  (X,X')^T= 1 (X,X')^T ,
\end{eqnarray}
where $X=(X_1,X_2,X_3)^T$ 
In this configuration, $X'$ is an eigenvector of $L'$
for the eigenvalue 1. 
and we have Link connections between $G$ and $G'$.

Finally, we show an example of case (iii) where $G,G"$ are 2 soft
and  $G'$ is 1 soft.
\begin{figure} [H]
\centerline{
\epsfig{file=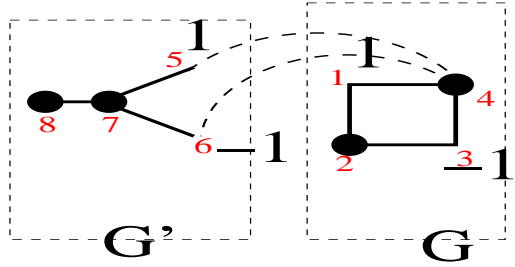,height=4 cm,width=8 cm,angle=0}
}
\caption{An example of case (iii) for eigenvalue $\lambda=2$.}
\label{sub4}
\end{figure}
We have to solve $({\bar \Delta} + L') X' = 2 X'$ where
$$L = \begin{pmatrix} 
2 & -1 & 0 & -1 \\
-1 & 2 & -1 & 0 \\
0 & -1 & 2 & -1 \\
-1 & 0 & -1 & 2 
\end{pmatrix}, ~~
L' = \begin{pmatrix}
1& 0& -1& 0 \\
0& 1& -1& 0  \\
-1& -1& 3& -1 \\
0 &  0& -1& 1 
\end{pmatrix}, ~~
{\bar \Delta} = \begin{pmatrix}
0.5  &  -0.5  &  0 \\
-0.5  &  0.5 &  0 \\
0  &  0  &  0 
\end{pmatrix} .$$
Note that the eigenvector $X'=(1,-1,0,0)^T$ is shared by $L'$
and ${\bar \Delta}$ so that $({\bar \Delta} + L') X' = 2 X'$.

The transformations introduced in the two previous sections enable us to link
the different members of a given class. To summarize, we have
\begin{itemize}
\item Articulation : one can connect any graph $G_2$ to the soft nodes
of a given graph $G_1$ and keep the eigenvalue. The new graph $G_1 \cup G_2$
has soft nodes everywhere in $G_2$.
\item Link : introducing a link between equal nodes does not change the
eigenvalue and eigenvector.
\item Contraction of a d-regular graph linked to a soft node. To have
minimal graphs in the sense of Link we need to take $d=0$.
\item Soldering : one can connect two graphs by contracting one or several
soft nodes of each graph.
\end{itemize}
In the next subsections we present a classification of small size 
$\lambda$ soft graphs for different $\lambda$s.

\subsection{$1$-soft graphs }

\begin{figure} [H]
\centerline{
\epsfig{file=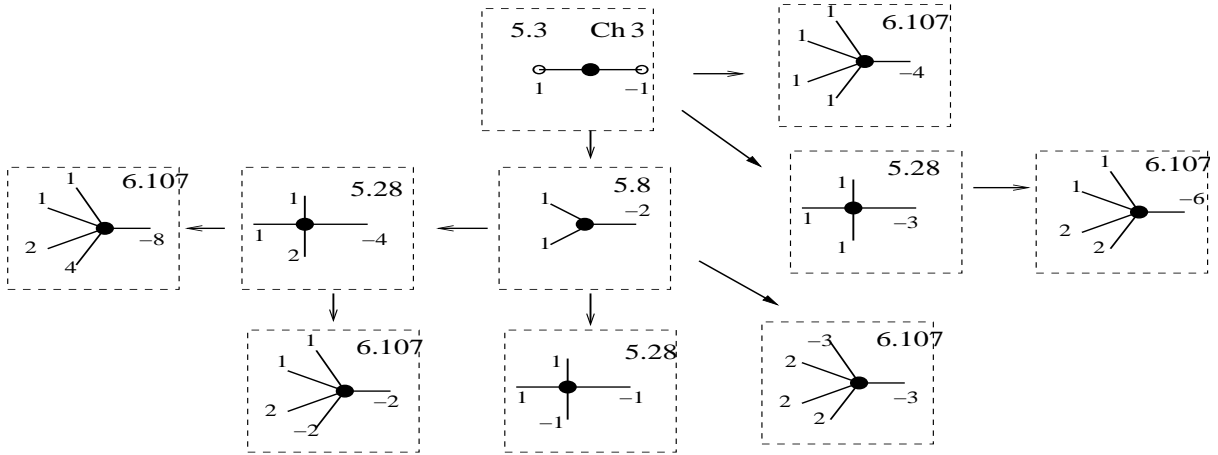,height=6 cm,width=16 cm,angle=0}
}
\caption{$1_s$ graphs: graphs generated by expansion. }
\label{contrac1}
\end{figure}
Fig. \ref{contrac1} shows some of the 1s graphs generated by expansion.
Note the variety of possibilities. 
\begin{figure} [H]
\centerline{
\epsfig{file=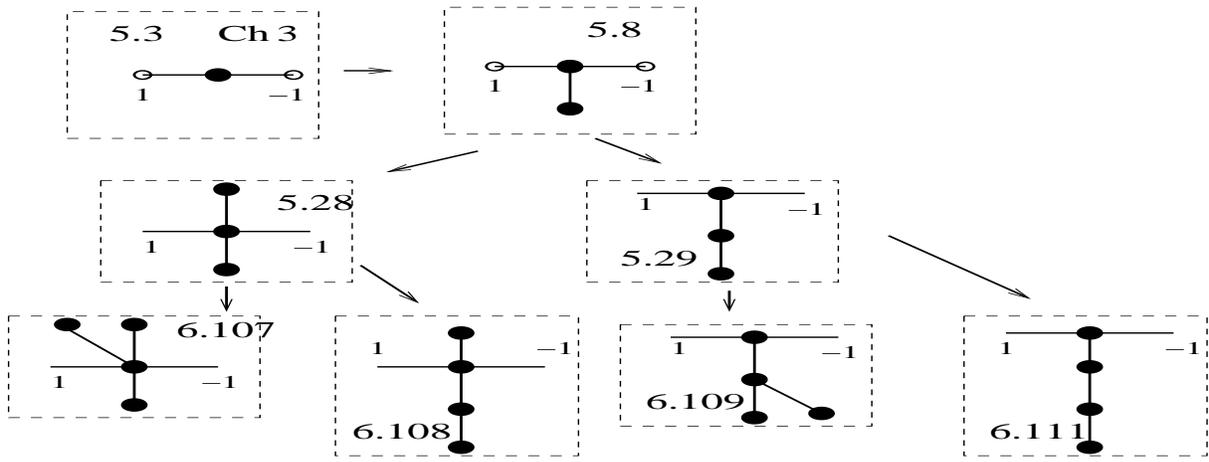,height=6 cm,width=16 cm,angle=0}
}
\caption{$1_s$ graphs: graphs generated by articulation}
\label{arti1}
\end{figure}
Fig. \ref{arti1} shows some of the 1s graphs generated by articulation.
The $1 ,  0,  -1$ configuration remains clearly visible.

\begin{figure} [H]
\centerline{
\epsfig{file=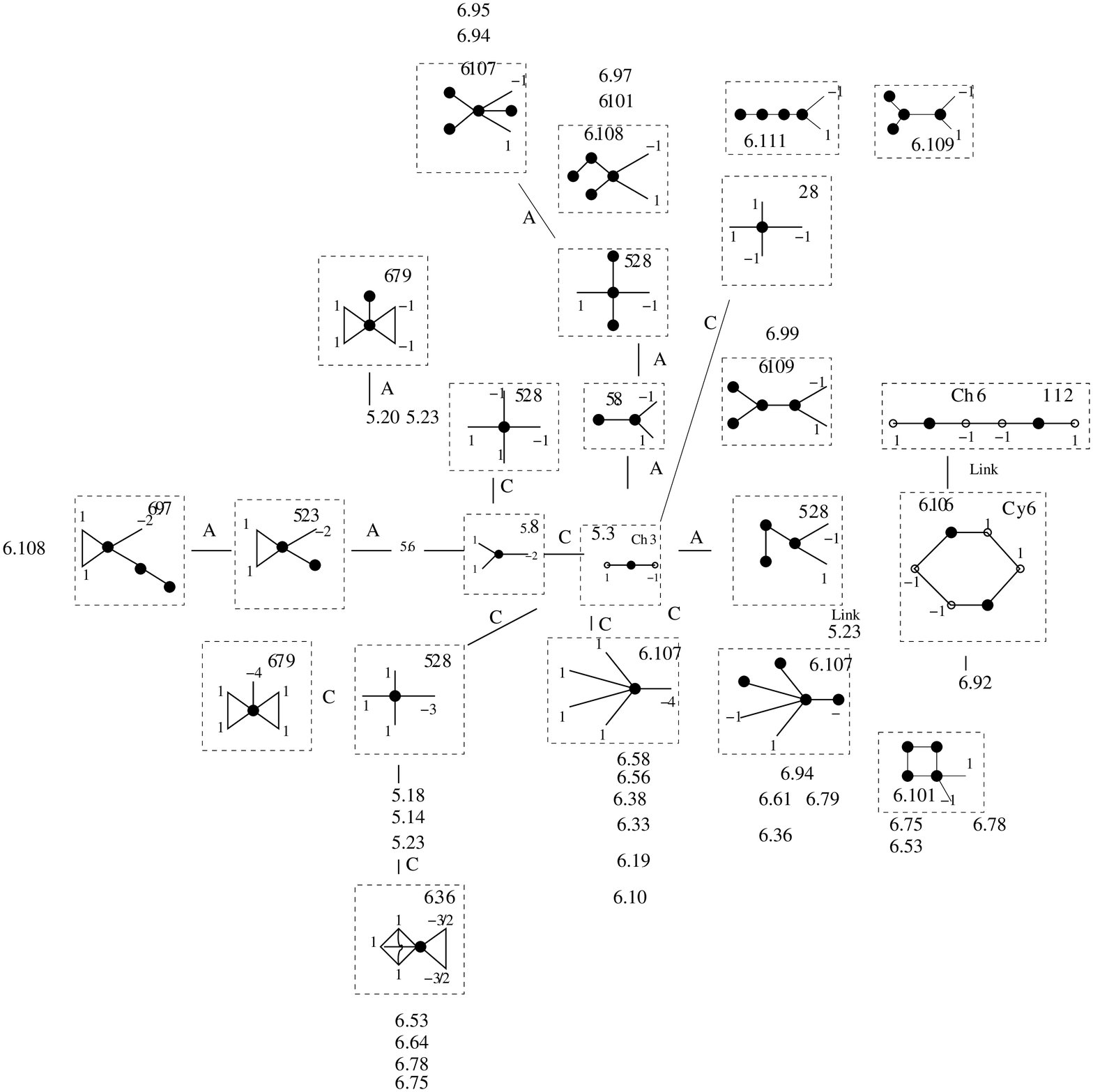,height=12 cm,width=17 cm,angle=0}
}
\caption{$1$-soft graphs. The soft nodes are in boldface. We only present
symmetric expansions so that links are possible.}
\label{l1}
\end{figure}
Fig. \ref{l1} shows the 1s graphs with at most 6 vertices. Notice
how they are linked by articulation (A), expansion/contraction (C) and links
and can all be obtained from the graph 5.3 (chain 3).
The connection $Ch3$ - $28$ is a contraction of two $Ch3$ chains.
Connecting two 3 chains $Ch3$ with an Link transformation we obtain 
a chain 6 $Ch6$. One can also go from $Ch6$ to 23 by soldering
the two soft nodes.

\subsection{$2$-soft graphs }

Fig. \ref{contrac2} shows some of the 2s graphs generated by expansion of
the 5.7 graph. 
\begin{figure} [H]
\centerline{
\epsfig{file=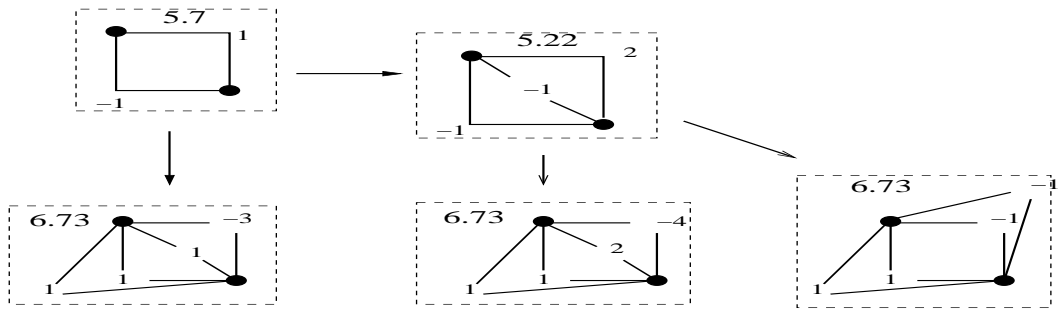,height=4 cm,width=14 cm,angle=0}
}
\caption{$2_s$ graphs: graphs generated by expansion. }
\label{contrac2}
\end{figure}
Similarly Fig. \ref{arti2} shows some of the 2s graphs generated by 
articulation from the same graph.
\begin{figure} [H]
\centerline{
\epsfig{file=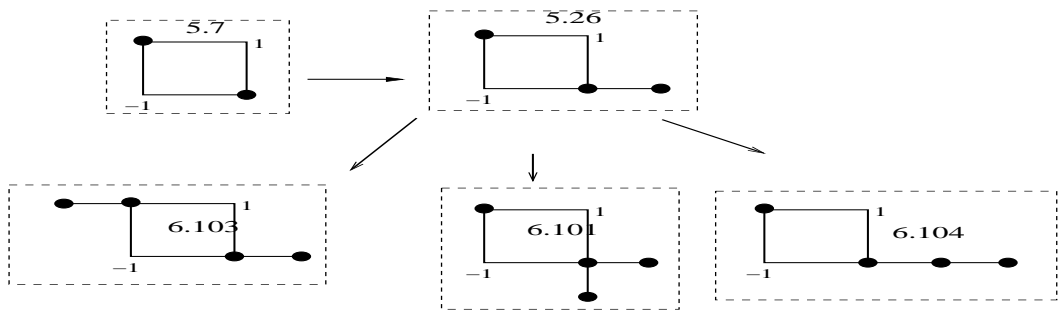,height=4 cm,width=14 cm,angle=0}
}
\caption{$2_s$ graphs: graphs generated by articulation}
\label{arti2}
\end{figure}
Fig. \ref{l2} shows all 2s graphs with at most 6 vertices. We included
graph 5.1 because with a link it gives configuration 6.104.
Notice how all graphs can be generated from 5.5 and 5.1.
\begin{figure} [H]
\centerline{
\epsfig{file=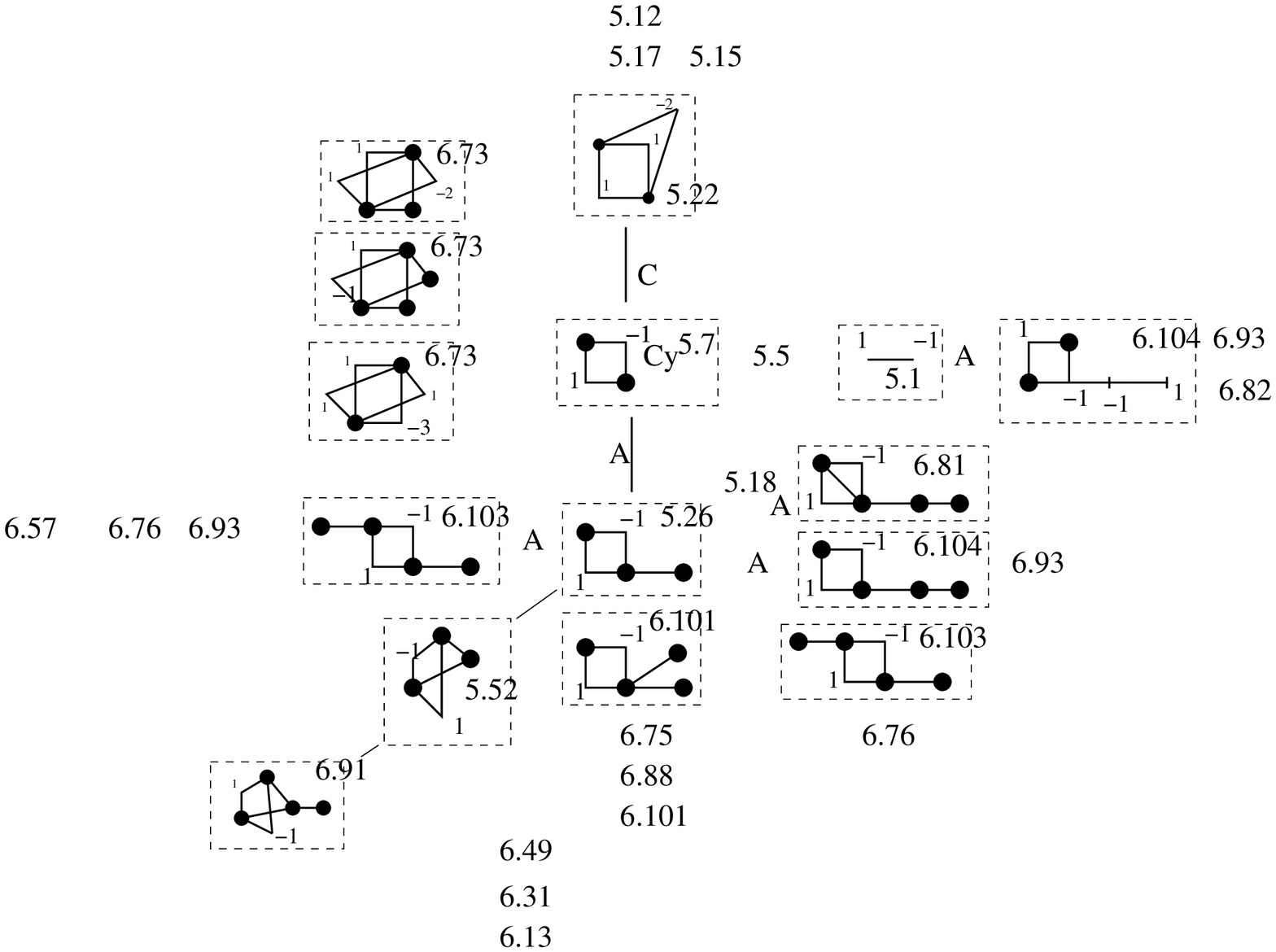,height=12 cm,width=16 cm,angle=0}
}
\caption{$2$-soft graphs} 
\label{l2}
\end{figure}

\subsection{$3$-soft graphs }

Fig. \ref{contrac3} shows a 3s graph generated by expansion
of graph 5.22.
\begin{figure} [H]
\centerline{
\epsfig{file=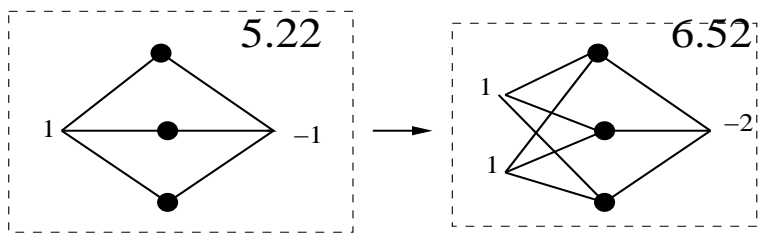,height=3 cm,width=10 cm,angle=0}
}
\caption{$3_s$ graphs: graphs generated by expansion. }
\label{contrac3}
\end{figure}
Fig. \ref{arti3} shows some 3s graphs generated by articulation on
graphs 5.2 and 5.22.
\begin{figure} [H]
\centerline{ \epsfig{file=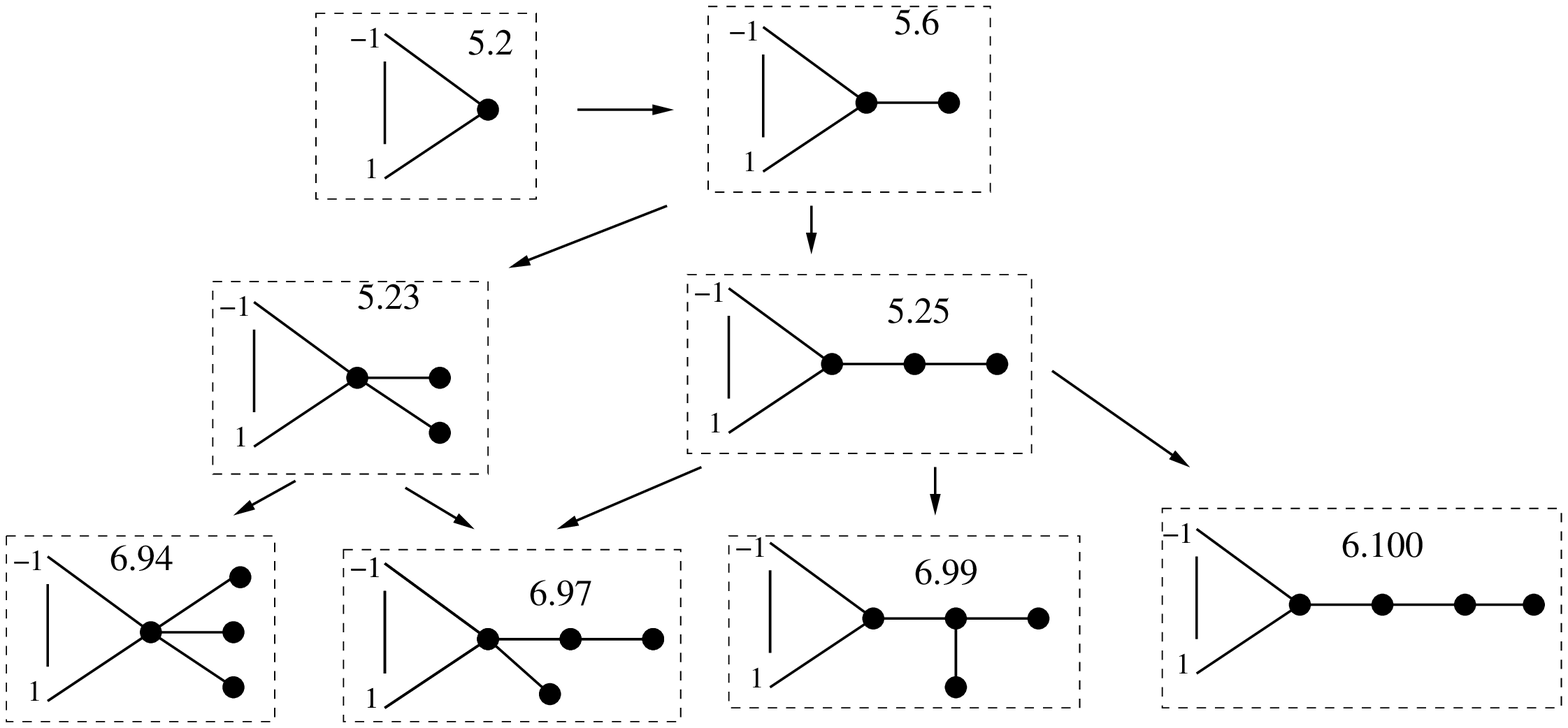,height=3 cm,width=14 cm,angle=0} }
\vspace{20pt}
\centerline{ \epsfig{file=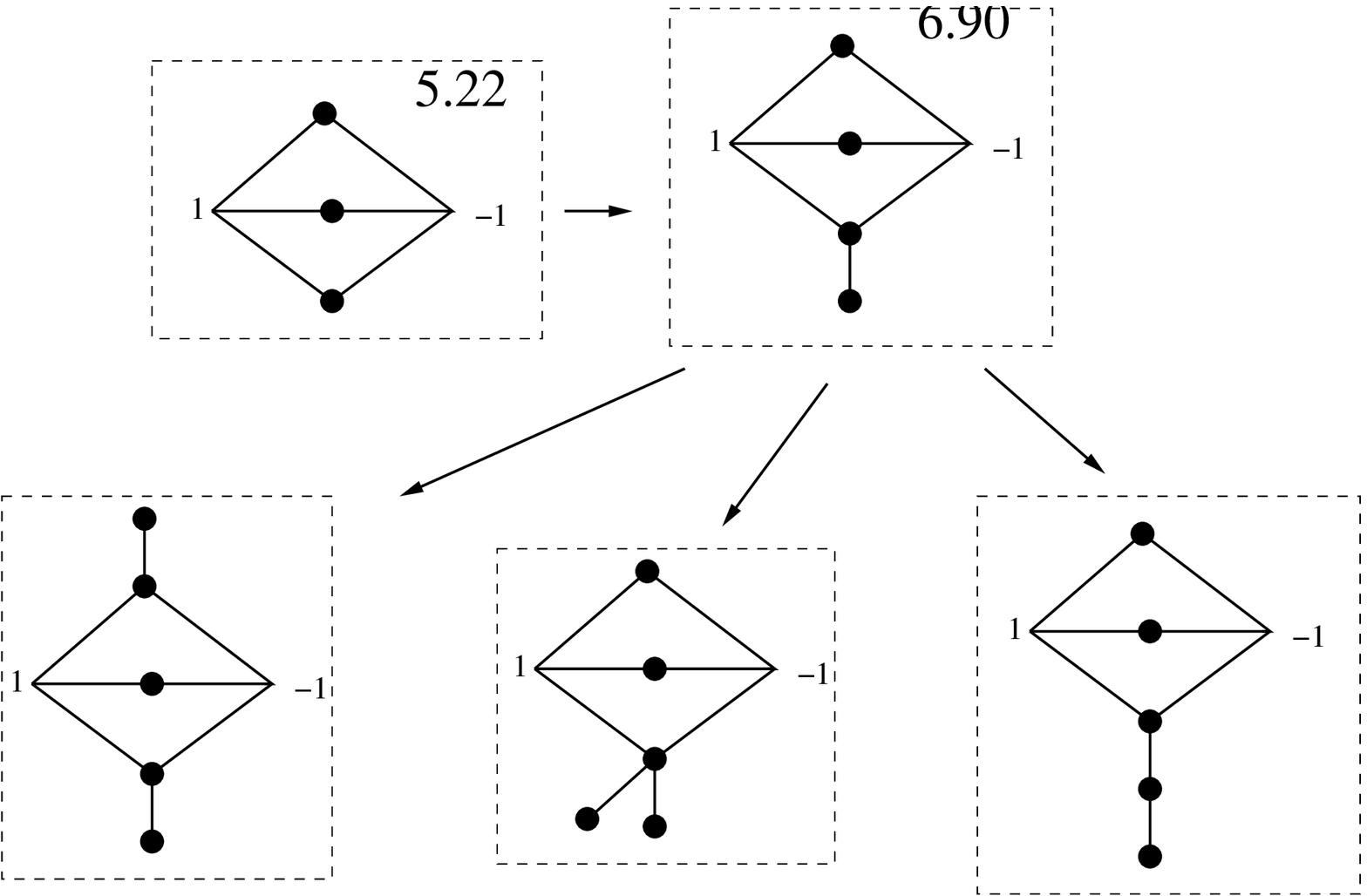,height=3 cm,width=14 cm,angle=0} }
\caption{$3_s$ graphs: graphs generated by articulation}
\label{arti3}
\end{figure}

\begin{figure} [H]
\centerline{
\epsfig{file=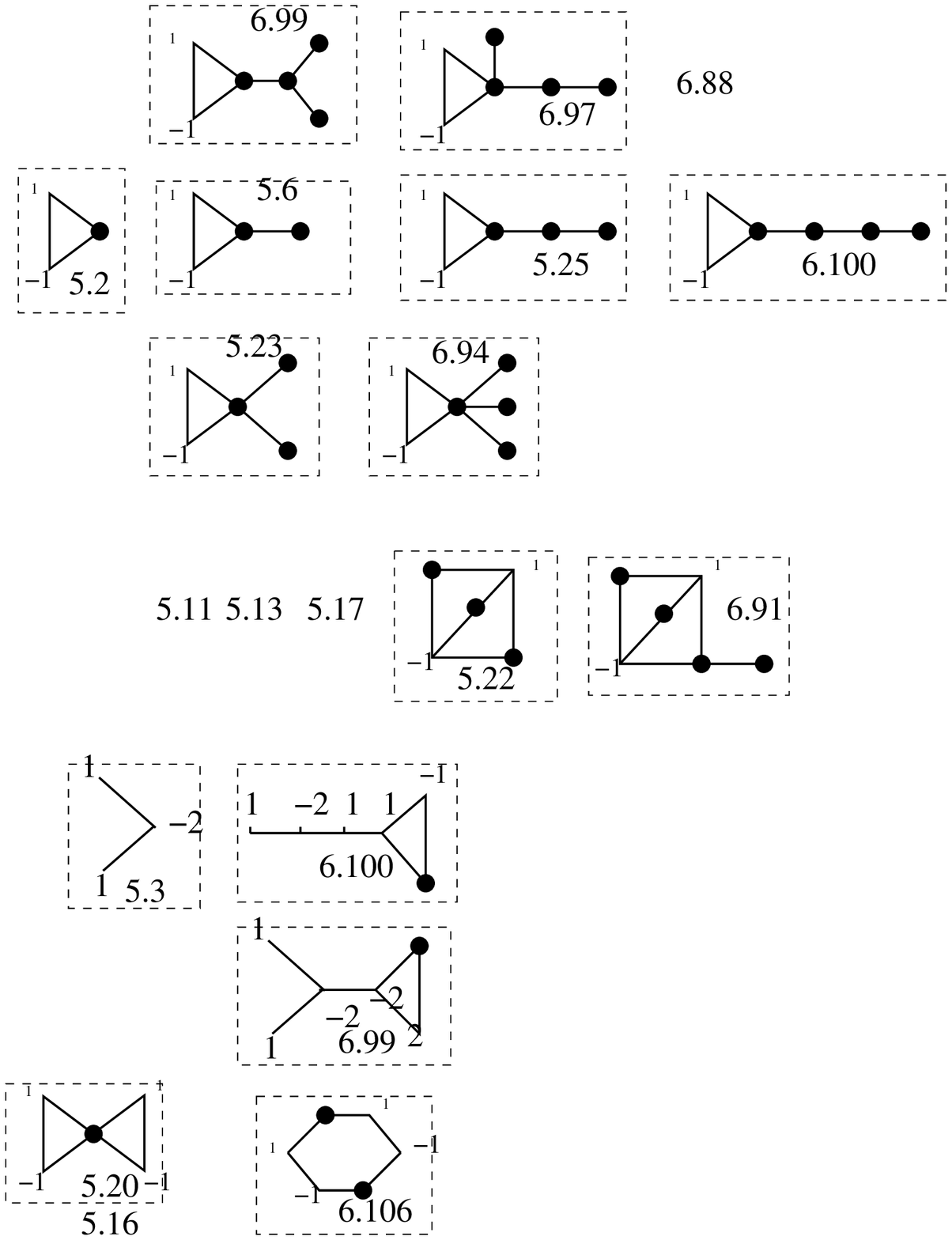,height=12 cm,width=16cm,angle=0}
}
\caption{$3$-soft graphs. }
\label{l3}
\end{figure}
Fig. \ref{l3} shows all 3s graphs with at most 6 vertices. Notice
how they are generated by graphs 5.2, 5.22 and 5.3. Graph 5.20
is the soldering of two graphs 5.2 .

\subsection{$4$-soft graphs }

Fig. \ref{arti4} shows some 4s graphs generated by articulation
on the graph 5.3.
\begin{figure} [H]
\centerline{ \epsfig{file=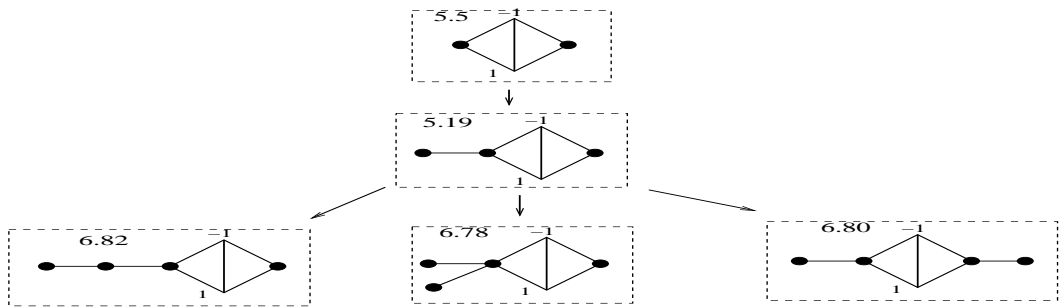,height=4 cm,width=14 cm,angle=0} }
\caption{$4_s$ graphs: graphs generated by articulation}
\label{arti4}
\end{figure}
Fig. \ref{l4} shows the 4s graphs with at most 6 vertices. Notice how
they are generated from graphs 5.5 (2 configurations) and 6.93. The
graph 5.7 is included to show its connection to 6.93 (replacing
a matching by a square).
\begin{figure} [H]
\centerline{
\epsfig{file=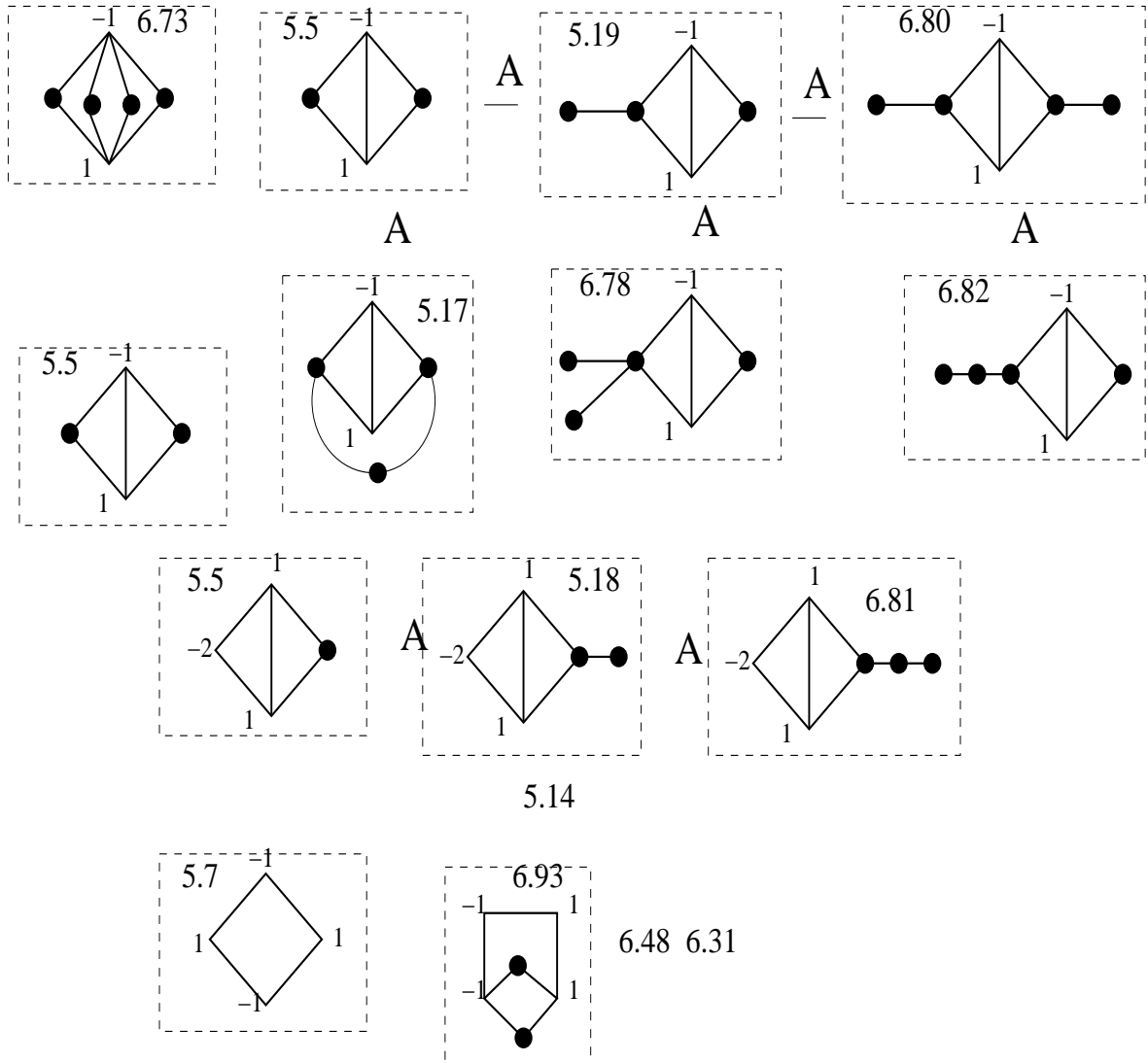,height=16 cm,width=18cm,angle=0}
}
\caption{$4$-soft graphs. 
}
\label{l4}
\end{figure}

\subsection{$5$-soft graphs }

Fig. \ref{l5} shows 5s graphs with at most 6 vertices. Notice
how they stem from graphs 6.70, 5.13 and two configurations 
of 5.15. 
\begin{figure} [H]
\centerline{
\epsfig{file=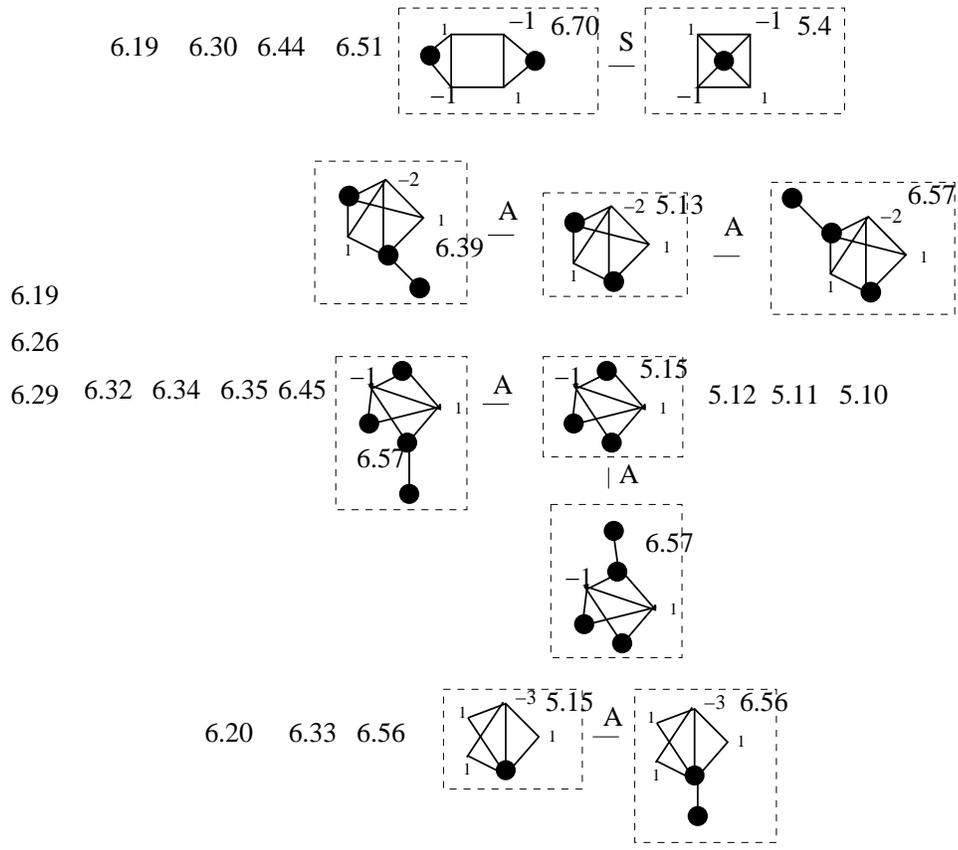,height=12 cm,width=14cm,angle=0}
}
\caption{$5$-soft graphs.
}
\label{l5}
\end{figure}

\subsection{$6$-soft graphs }

Fig. \ref{l6} shows 6s graphs with at most 6 vertices. Notice
how these graphs stem from graphs 6.9, 6.37, 6.2 (two configurations)
and 6.16.
\begin{figure} [H]
\centerline{
\epsfig{file=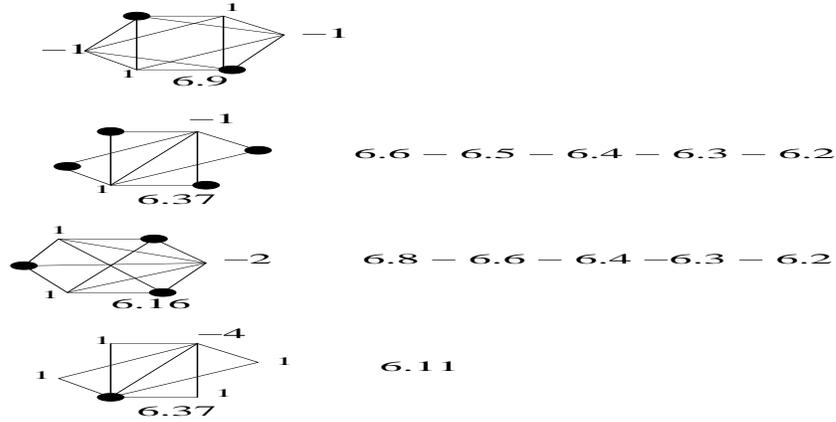,height=6 cm,width=12cm,angle=0}
}
\caption{$6$-soft graphs.
}
\label{l6}
\end{figure}

\subsection{x-soft graphs, x non integer }

As proven above, the only eigenvalues that
are non integer are irrational. For these, there can be
soft nodes. Among the 5 node graphs, we found irrational eigenvalues for the
chain 5 and the cycle 5. In addition, there are the following
\begin{table} [H]
\centering
\begin{tabular}{|c|c|c|}
\hline
nb. in          &  eigenvalue &  eigenvector \\  
classification  &             &             \\  \hline
5.16   & $\lambda_2=3 -\sqrt{2}$ & $(-0.27,-0.65,0,0.65,0.27)^T$ \\ \hline
5.16   & $\lambda_4=3 +\sqrt{2}$ & $(0.65,-0.27,0,0.27,-0.65)^T$ \\ \hline
5.21   & $\lambda_4=(7 +\sqrt{5} )/2$ & $(-0.6,0.6,0.37,0,-0.37)^T$ \\ \hline
5.21   & $\lambda_5=(7 -\sqrt{5} )/2$ & $(-0.37,0.37,-0.6,0,0.6)^T$ \\ \hline
5.24   & $\lambda_2=(5-\sqrt{13})/2 $ & $(-0.67,-0.2,0.2,0.67,0)^T$ \\ \hline
5.24   & $\lambda_5=(5+\sqrt{13})/2 $ & $(-0.2,0.67,-0.67,0.2,0)^T$ \\ \hline
5.30 (chain 5)   & $\lambda_4=(3 +\sqrt{5} )/2$ & $(-0.6,0.6,0.37,0,-0.37)^T$ \\ \hline
5.30 (chain 5)   & $\lambda_5=(3 -\sqrt{5} )/2$ & $(-0.37,0.37,-0.6,0,0.6)^T$ \\ \hline
\end{tabular}
\caption{\small\em Non trivial graphs with soft nodes and 
non integer eigenvalues.}
\label{tab7aa}
\end{table}

Remarks \\
The graph 5.16 is 3 soft. 
The graphs 5.21 and 5.24 are not part of an integer soft class. They are
\begin{figure} [H]
\centerline{
\epsfig{file=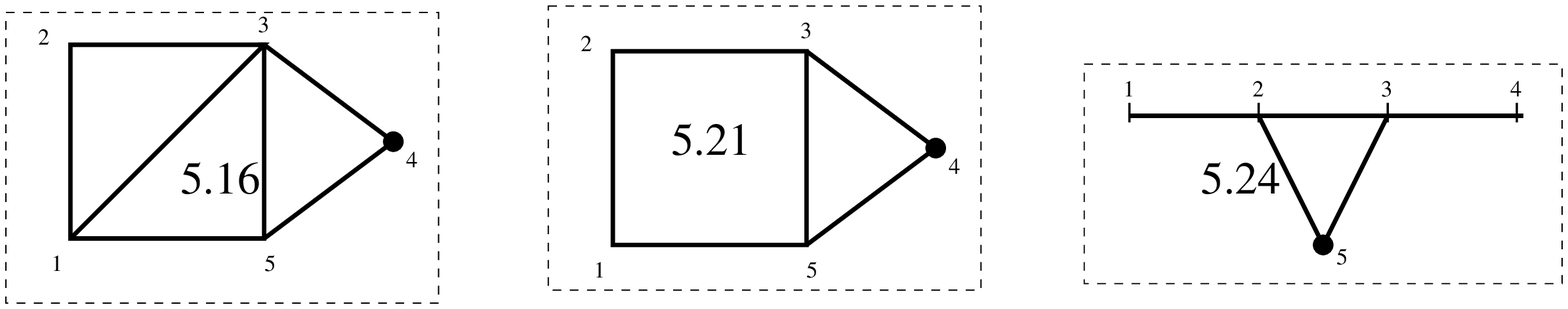,height=4 cm,width=12 cm,angle=0}
}
\caption{
The graphs 5.16, 5.21 and 5.24 with their soft node}
\label{f2124}
\end{figure}
\begin{itemize}
\item Graph 5.16 is a chain 4 with  a soft node added. 
\item Graph 5.21 is obtained from chain 5 (graph 5.30) by inserting a soft node.
\end{itemize}

\subsection{Minimal $\lambda$ soft graphs}

We computed the minimal $\lambda$ soft graphs for $\lambda=1, \dots, 6$.
These are presented in Fig. \ref{lmin}.
\begin{figure} [H]
\centerline{
\epsfig{file=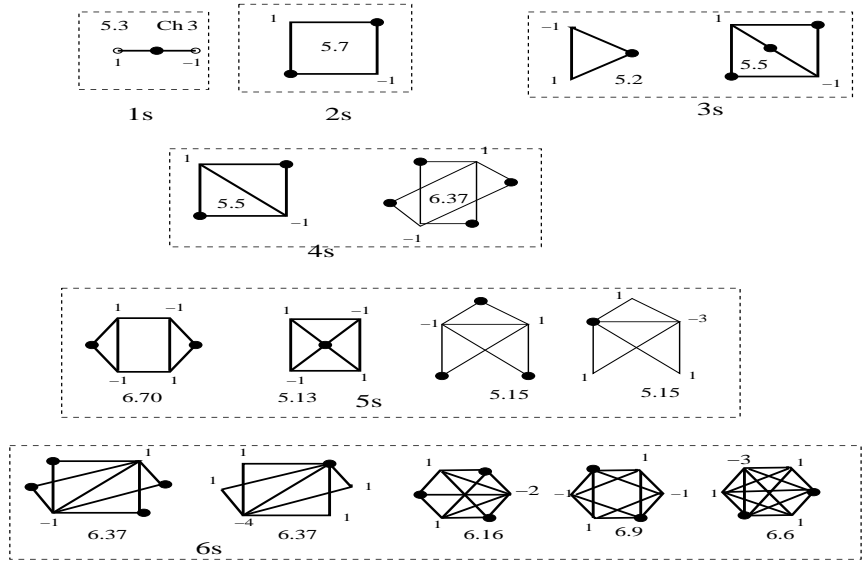,height=8 cm,width=12 cm,angle=0}
}
\caption{ The minimal $\lambda$ soft graphs for $\lambda=1,2,3,4,5$
and 6.}
\label{lmin}
\end{figure}
Note that there is a unique minimal $\lambda$-soft graph for
$\lambda=1$ and $2$. There are two minimal $3$-soft graphs
and $4$-soft graphs. There are four minimal $5$-soft graphs. 
The first two are generated by respectively inserting a soft node 
and adding a soft node to the minimal $4$-soft graph. The third 
and fourth ones
are obtained respectively by adding three soft nodes to the $2$ clique and 
adding a soft node to the $4$ star.

Three systematic ways to generate minimal $\lambda+1$-soft graphs are
(i) inserting a zero to a $\lambda$-soft graph, \\
(ii) adding a zero to a$\lambda$-soft graph and \\
(iii) adding a matching to a $\lambda-1$-soft graph.
One can therefore generate systematically minimal $7$-soft, $8$-soft..
graphs.

\section{Conclusion}

We reviewed families of graphs whose spectrum is known and presented
transformations that preserve an eigenvalue. 
The link, articulation and soldering were contained
in Merris \cite{merris} and we found two new transformations :
the regular expansion and the replacement of
a coupling by a square. We also showed transformations that
shift an eigenvalue :  insertion of a soft node (+1),
addition of a soft node (+1), insertion of a matching (+2).
The first is new and the second and third were found by Das \cite{das04}
and Merris \cite{merris} respectively.

From this appears a landscape of graphs formed by
families of $\lambda$-graphs 
connected by these transformations. These structures remain
to be understood. We presented the connections between
small graphs with up to six vertices. Is it possible to obtain
all the $\lambda$ graphs using a series of elementary transformations?
Or just part of these ?

We answered partially the question: can one predict
eigenvalues/eigenvectors from the geometry of a graph ? 
by examining the situation of a a $\lambda$
subgraph $G$ of a $\lambda$ graph $G"$. 
We showed that if the remainder graph $G'$ is $\lambda$, 
it is an articulation or a link of $G$. If not and if
$G$ and $G'$ share an eigenvector, the two may be related 
by adding one or several soft nodes to $G'$.

A number of the graphs we studied have irrational eigenvalues
and we can define $\lambda$ graphs for these as well because
the transformations apply. However we did not find any connection
between $\lambda$ graphs and $\mu$ graphs if 
$\lambda$ is an integer and $\mu$ an irrational.

\section{Appendix A: Graph classification }

The following tables indicate the graph classification we used. 
Each line in the "connections" column is the connection list of
the corresponding graph.

\begin{table} [H]
\centering
\begin{tabular}{|c|c|c|c|}
\hline
classification & nodes & links &  connections \\
\cite{crs01}   &       &       &     \\  \hline
1&2&1&12 \\
2&3&3&12~13~23~ \\
3&3&2&12~23 \\
4&4&6&12~13~14~23~24~34 \\
5&4&5&12~13~14~23~34 \\
6&4&4&12~13~23~34 \\
7&4&4&12~14~23~34~ \\
8&4&3&12~23~24 \\
9&4&3&12~23~34 \\
10&5&10&12~13~14~15~23~24~25~34~35~45 \\
11&5&9&12~13~14~15~23~24~34~35~45~ \\
12&5&8&12~14~15~23~42~25~34~45~ \\
13&5&8&12~13~15~23~24~34~35~45~ \\
14&5&7&12~13~14~23~24~34~35 \\
15&5&7&13~15~23~25~34~35~45~ \\
16&5&7&12~13~15~23~34~35~45~ \\
17&5&7&12~14~15~23~25~34~45~ \\
18&5&6&12~13~14~23~34~35~ \\
19&5&6&12~14~23~24~34~35~ \\
20&5&6&12~13~23~34~35~45~ \\
21&5&6&12~15~23~34~35~45~ \\
22&5&6&13~15~23~25~34~45~ \\
23&5&5&12~13~23~34~35 \\
24&5&5&12~23~25~35~34 \\
25&5&5&12~13~23~34~45 \\
26&5&5&12~14~23~34~35 \\
27&5&5&12~15~23~34~45~ \\
28&5&4&13~23~34~35 \\
29&5&4&12~13~14~45 \\
30&5&4&12~23~34~45 \\
\hline
\end{tabular}
\caption{\small\em Graphs of less than 5 nodes labelled $1$ to $30$ in classification \cite{crs01}.}
\label{c51a30}
\end{table}

\begin{table} [H]
\centering
\begin{tabular}{|c|c|c|c|}
\hline
classification & nodes & links &  connections \\
\cite{crs01}   &       &       &     \\  \hline
1&6&15&12~13~14~15~16~23~24~25~26~34~35~36~45~46~56 \\
2&6&14&12~13~15~16~23~24~25~26~34~35~36~45~46~56~ \\
3&6&13&12~14~15~16~23~24~25~26~34~35~45~46~56~ \\
4&6&13&12~13~15~16~23~24~25~26~34~35~45~46~56~ \\
5&6&12&12~13~15~16~23~25~26~34~35~36~45~56~ \\
6&6&12&12~13~14~15~16~23~25~34~35~36~45~56~ \\
7&6&12&12~13~15~16~23~24~25~26~34~35~45~56~ \\
8&6&12&12~13~15~16~23~24~34~35~36~45~46~56~ \\
9&6&12&12~13~15~16~23~24~26~34~35~45~46~56~ \\
10&6&11&12~13~14~15~23~24~25~34~35~45~56 \\ 
11&6&11&12~14~16~23~24~26~34~36~45~46~56 \\
12&6&11&12~13~15~16~23~25~26~34~35~45~56 \\
13&6&11&12~15~16~23~24~25~26~34~35~45~56 \\
14&6&11&12~13~15~16~23~25~26~34~36~45~56 \\
15&6&11&12~14~15~16~23~24~34~35~45~46~56 \\
16&6&11&12~14~15~16~23~25~34~35~36~45~56 \\
17&6&11&12~14~15~16~23~26~34~35~45~46~56 \\
18&6&11&12~15~16~23~24~26~34~35~45~46~56 \\
19&6&10&12~13~15~23~24~25~34~35~45~56 \\
20&6&10&12~13~14~15~23~24~34~35~45~56 \\
21&6&10&12~15~16~23~24~25~26~35~45~56 \\
22&6&10&12~13~15~16~23~34~35~36~45~56 \\
23&6&10&12~16~23~25~26~34~35~36~45~56 \\
24&6&10&12~15~16~23~25~26~34~35~45~56 \\
25&6&10&12~13~14~15~16~23~34~36~45~56 \\
26&6&10&12~14~16~23~34~35~36~45~46~56 \\
27&6&10&12~15~16~23~26~34~35~36~45~56 \\
28&6&10&12~14~16~23~24~34~35~45~46~56 \\
29&6&10&12~14~16~23~24~26~34~36~45~56 \\
30&6&10&12~15~61~23~24~25~34~36~45~56 \\
\hline
\end{tabular}
\caption{\small\em 6 node graphs labelled $1$ to $30$ in classification \cite{crs01}.}
\label{c1a30}
\end{table}

\begin{table} [H]
\centering
\begin{tabular}{|c|c|c|c|}
\hline
classification & nodes & links &  connections \\
\cite{crs01}   &       &       &     \\  \hline
31&6&10&12~15~16~23~24~26~34~35~45~56 \\
32&6&10&12~14~16~23~25~26~34~36~45~56 \\
33&6&9&12~15~23~24~25~34~35~45~56 \\
34&6&9&12~14~15~23~24~25~34~45~56 \\
35&6&9&12~13~14~15~23~24~34~45~56 \\
36&6&9&12~13~14~23~24~34~45~46~56 \\
37&6&9&12~13~14~15~16~24~34~45~46 \\
38&6&9&12~14~15~23~25~34~35~45~56 \\
39&6&9&12~13~15~23~24~25~34~45~56 \\
40&6&9&12~13~16~23~34~35~36~46~56 \\
41&6&9&12~16~23~34~35~36~45~46~56~ \\
42&6&9&12~16~23~24~26~34~45~46~56~ \\
43&6&9&12~13~16~23~34~35~36~45~56~ \\
44&6&9&12~13~16~23~34~36~45~46~56~ \\
45&6&9&12~16~23~25~34~35~36~45~56~ \\
46&6&9&12~13~15~16~23~34~36~45~56~ \\
47&6&9&12~13~16~23~25~26~34~45~56~ \\
48&6&9&12~15~16~23~26~34~35~45~56~ \\
49&6&9&12~15~16~23~24~26~35~45~56~ \\
50&6&9&12~14~15~16~23~34~36~45~56~ \\
51&6&9&12~15~24~16~23~34~36~45~56~ \\
52&6&9&12~14~16~23~25~34~36~45~56~ \\
53&6&8&12~13~14~23~24~34~45~46~ \\
54&6&8&12~13~14~23~24~25~34~36~ \\
55&6&8&12~13~14~23~24~34~45~56~ \\
56&6&8&13~15~23~25~34~35~45~56~ \\
57&6&8&12~14~23~24~25~34~45~56~ \\
58&6&8&12~15~23~25~34~35~45~56~ \\
59&6&8&12~13~15~23~34~35~45~56~ \\
60&6&8&12~14~15~23~24~34~45~56~ \\
\hline
\end{tabular}
\caption{\small\em 6 node graphs labelled $31$ to $60$ in classification \cite{crs01}. }
\label{c31a60}
\end{table}

\begin{table} [H]
\centering
\begin{tabular}{|c|c|c|c|}
\hline
classification & nodes & links &  connections \\
\cite{crs01}   &       &       &     \\  \hline
61&6&8&12~13~14~15~16~23~45~46~ \\
62&6&8&12~14~23~42~34~35~36~56~ \\
63&6&8&12~14~15~23~25~34~45~56~ \\
64&6&8&12~13~15~23~25~34~45~56~ \\
65&6&8&12~13~15~23~24~34~45~56~ \\
66&6&8&12~16~24~34~36~45~56~46 \\
67&6&8&12~13~16~23~34~36~45~56~ \\
68&6&8&12~13~16~23~34~35~45~56~ \\
69&6&8&12~15~16~23~26~34~45~56~ \\
70&6&8&12~13~16~23~34~45~46~56~ \\
71&6&8&12~13~16~23~34~35~46~56~ \\
72&6&8&12~15~16~23~34~36~45~56~ \\
73&6&8&12~15~23~24~26~35~45~56 \\
74&6&8&12~14~16~23~25~34~45~56~ \\
75&6&7&12~13~14~23~34~35~36 \\
76&6&7&12~23~24~25~34~45~46 \\
77&6&7&12~14~23~24~25~34~36 \\
78&6&7&12~14~23~24~34~35~36 \\
79&6&7&12~13~23~34~36~35~45~ \\
80&6&7&12~23~25~34~35~45~46~ \\
81&6&7&12~13~14~23~34~35~56 \\
82&6&7&12~14~23~24~34~35~56 \\
83&6&7&12~13~23~34~35~45~46 \\
84&6&7&12~13~23~34~45~46~56~ \\
85&6&7&12~15~23~24~34~45~46 \\
86&6&7&12~13~15~23~34~45~46 \\
87&6&7&12~13~15~23~34~45~46 \\
87B&6&7&12~13~15~24~34~45~56~ \\
88&6&7&12~14~23~34~35~36~56~ \\
89&6&7&12~15~16~23~34~45~56~ \\
90&6&7&13~15~23~25~34~45~56~ \\
\hline
\end{tabular}
\caption{\small\em 6 node graphs labelled $61$ to $90$ in classification \cite{crs01}. Note that 87B is absent from \cite{crs01}.}
\label{c61a90}
\end{table}

\begin{table} [H]
\centering
\begin{tabular}{|c|c|c|c|}
\hline
classification & nodes & links &  connections \\
\cite{crs01}   &       &       &     \\  \hline
91&6&7&12~14~23~25~34~45~56 \\
92&6&7&12~16~23~34~36~45~56~ \\
93&6&7&12~15~23~34~36~45~56~ \\
94&6&6&12~13~23~34~35~36 \\
95&6&6&13~23~34~35~45~56 \\
96&6&6&12~23~25~34~35~56 \\
97&6&6&12~13~23~34~35~56 \\
98&6&6&12~23~35~34~45~56 \\
99&6&6&12~23~24~45~46~56 \\
100&6&6&12~23~34~45~46~56\\
101&6&6&12~14~23~34~35~36 \\
102&6&6&12~14~23~25~34~36 \\
103&6&6&12~23~42~35~45~56 \\
103&6&6&12~14~23~34~35~56 \\
105&6&6&12~15~23~34~45~46 \\
106&6&6&12~16~23~34~45~56~ \\
107&6&5&16~26~36~46~56 \\
108&6&5&14~24~34~45~56 \\
109&6&5&13~23~34~45~46~ \\
110&6&5&12~23~34~36~45~ \\
111&6&5&12~23~34~45~46~ \\
112&6&5&12~23~34~45~56 \\
\hline
\end{tabular}
\caption{\small\em 6 node graphs labelled $91$ to $112$ in classification \cite{crs01}.}
\label{c91a112}
\end{table}

\section{Appendix B: sets $1_s,~2_s,~3_s,~4_s$ and $5_s$}

We give here the tables for the sets 1s, 2s, 3s, 4s and 5s
for 5 node graph and 6 node graphs. The numbering of the graphs follow
the ones given by Cvetkovic \cite{crs01} for 5 and less nodes and 6 nodes graphs
respectively. 

\subsection{1s}

\begin{table} [H]
\centering
\begin{tabular}{|c|c|c|c|c|}
\hline
nodes & links & classification \cite{crs01}  & eigenvector &  connection \\
      &       &                              &              &  \\  \hline
3  & 2  & 3    &  $(-1, 0, 1)$     &        \\ \hline
4  & 3  &  8   &  $(0, 0, -1, 1)$  &  \\ \hline
4  & 4  &  6   &  $(1, 1, 0, -2)$  & expansion on 5.3  \\ \hline
5  & 4  & 28   &  $(1,1,0,-1,-1)$  & \\ 
5  & 4  & 28   &  $(1,0,0,0,-1)$   & articulation on 5.3 \\ 
5  & 4  & 28   &  $(1,1,1,0, -3)$  &star 4\\ 
5  & 4  & 29   &  $(0,1,-1,0, 0)$  &  articulation on 5.3  \\ \hline
5  & 5  & 23   &  $(0,0,0,1,-1)$   & articulation on 5.3\\ 
5  & 5  & 23   &  $(1,1,0,-2,0)$   & expansion on 5.3\\ \hline
5  & 6  & 18   &  $(1,1,0,1, -3)$  & \\ 
5  & 6  & 20   &  $(1,1,0,-1,-1)$  & articulation on 5.28 \\ \hline
5  & 7  & 14   &  $(1,1,0,1, -3)$  & \\ \hline
\end{tabular}
\caption{\small\em Five node graphs with soft nodes and eigenvalue $1$.}
\label{tab3}
\end{table}

\begin{table} [H]
\centering
\begin{tabular}{|c|c|c|c|c|}
\hline
nodes & links & classification \cite{crs01}  & eigenvector &  connection \\
      &       &                              &              &  \\  \hline
6  & 11 &  10    &  $(1,1,1,1,0,-4)$ & link on 19\\ \hline
6  & 10 &  19    &  $(1,1,1,1,0,-4)$ & link on 33\\ \hline
6  & 9 &  33    &  $(1,1,1,1,0,-4)$ & link on 38\\ 
6  & 9 &  36    &  $(2,2,2,0,-3,-3)$ & link on 61\\ 
6  & 9 &  38    &  $(1,1,1,1,0,-4)$ & link 58\\ 
6  & 9 &  53    &  $(0,0,0,0,1,-1)$ & link 75 \\ \hline
6  & 9 &  53    &  $(1,1,1,0,-3,0)$ & link 75 \\ \hline
6  & 8 &  56    &  $(1,1,1,1,0,-4)$ & expansion on 5.3\\ 
6  & 8 &  58    &  $(1,1,1,1,0,-4)$ & expansion on 5.3\\ 
6  & 8 &  61    &  $(2,2,2,0,-3,-3)$ & link on 94 \\ \hline
6  & 7 &  75    &  $(0,0,0,0,1,-1)$ & articulation on 5.3\\ 
6  & 7 &  75    &  $(1,1,0,1,-3,0)$ & link on 101 \\ 
6  & 7 &  78    &  $(0,0,0,0,1,-1)$ & link on 101 \\ 
6  & 7 &  79    &  $(1,1,0,0,0,-2)$ & expansion on 5.3\\ 
6  & 7 &  79    &  $(1,1,0,-1,-1,0)$&  link on 94\\ 
6  & 7 &  92    &  $(1,1,0,-1,-1,0)$&  link on 106\\ \hline
6  & 6 &  94    &  $(1,1,0,-2,0,0)$ & link on 107\\ 
6  & 6 &  94    &  $(3,3,0,-2,-2,-2)$&  link on 107\\ 
6  & 6 &  94    &  $(1,-1,0,0,0,0)$ & link on 95\\ 
6  & 6 &  95    &  $(1,-1,0,0,0,0)$ & articulation on 5.3 \\ 
6  & 6 &  97    &  $(1,1,0,-2,0,0)$ & link on 108 \\ 
6  & 6 &  99    &  $(1,0,-1,0,0,0)$ & articulation on 5.3\\ 
6  & 6 &  101   &  $(0,0,0,0,1,-1)$ & articulation on 5.3\\ 
6  & 6 & 106   &   $(0,1,1,0,-1,-1)$& link between two 5.3 \\ \hline
6  & 5 &  107   &  $(-1,1,0,0,0,0)$ & articulation on 5.3\\ 
6  & 5 &  107   &  $(1,1,-1,-1,0,0)$&  soldering two 5.3 and articulation\\ 
6  & 5 &  107   &  $(1,1,0,-2,0,0)$&  articulation on 5.3\\ 
6  & 5 &  108   &  $(-1,1,0,0,0,0)$ & articulation on 5.3\\ 
6  & 5 &  108   &  $(-1,0,1,0,0,0)$ & articulation on 5.3\\ 
6  & 5 &  109   &  $(-1,1,0,0,0,0)$ & articulation on 5.3 \\ 
6  & 5 &  109   &  $(0,0,0,0,-1,1)$ & articulation on 5.3 \\ 
6  & 5 &  111   &  $(0,0,0,0,-1,1)$ & articulation on 5.3 \\ \hline
\end{tabular}
\caption{\small\em Six node graphs with soft nodes and eigenvalue $1$.}
\label{tab3a}
\end{table}

\subsection{2s}

\begin{table} [H]
\centering
\begin{tabular}{|c|c|c|c|c|}
\hline
nodes & links & classification \cite{crs01}  & eigenvector &  connection \\
      &       &                              &              &  \\  \hline
4  & 5  & 5   &    $(1,0,-1,0)$ &   link on 5.7     \\ \hline
4  & 4  & 7   &    $(1,0,-1,0)$ &        \\ 
4  & 4  & 7   &    $(0,1,0,-1)$ &        \\ \hline
5  & 8  &  12   &   $(1, 0, -2,0, 1)$ &  link on 5.17      \\ \hline
5  & 7  & 15   &   $(1,0,-1,0,0)$ &  articulation on 5.7      \\ 
5  & 7  &  15   &   $(1, 0  1, 0, -2)$&  link on 5.17        \\ 
5  & 7  & 17   &   $(1,0,-2,0, 1)$ &        \\ \hline
5  & 6  & 18   &   $(0,1,0,-1,0)$ &  articulation 5.7      \\ 
5  & 6  &  22   &   $(0,1, 0, -1,0)$ & add a zero to 5.3 and articulation       \\ 
5  & 6  &  22   &   $(1, 0, 0,-1,0)$ & add a zero to 5.3 and articulation       \\ \hline
5  & 5  & 26   &   $(0,1,0,-1,0)$ & articulation 5.7       \\ \hline
\end{tabular}
\caption{\small\em Five node graphs with soft nodes and eigenvalue $2$.}
\label{tab4}
\end{table}

\begin{table} [H]
\centering
\begin{tabular}{|c|c|c|c|c|}
\hline
nodes & links & classification \cite{crs01}  & eigenvector &  connection \\
      &       &                              &              &  \\  \hline
6  & 12 &  5   &  $(1,1,0,-3,0,1)$ &      \\ \hline
6  & 11 &  11  &  $(1,1,1,0,-3,0)$ &      \\ 
6  & 11 &  13  &  $(1,0,-1,-1,0,1)$ &      \\ 
6  & 11 &  14  &  $(1,1,0,-3,0,1)$ &      \\ \hline
6  & 10 &  21  &  $(1,0,-1,-1,0,1)$ &      \\ 
6  & 10 &  21  &  $(0,0,1,-1,0,0)$ &      \\ 
6  & 10 &  29  &  $(1,1,1,0,-3,0)$ &      \\ 
6  & 10 &  31  &  $(1,0,-1,-1,0,1)$ & link on 37 \\ \hline
6  & 9 &  33  &  $(-2,0,1,1,0,0)$ &  link on 56    \\ 
6  & 9 &  37  &  $(1,0,0,0,0,-1)$ &  link on 73 \\
6  & 9 &  37  &  $(0,0,0,-1,0,1)$ & link on 73 \\
6  & 9 &  37  &  $(1,0,1,-1,0,-1)$ &  link on 73 \\
6  & 9 &  40  &  $(0,0,0,0,1,-1)$ &  addition of a 0 to 5.3, articulation    \\ 
6  & 9 &  49  &  $(0,0,1,-1,0,0)$ &  addition of a 0 to 5.3, articulation    \\ 
6  & 9 &  49  &  $(1,0,-1,-1,0,1)$ & link on 69     \\  \hline
6  & 8 &  56  &  $(-1,1,0,0,0,0)$ &  addition of a 0 to 5.3, articulation    \\ 
6  & 8 &  56  &  $(1,1,0,-2,0,0)$ &  link on 64    \\ 
6  & 8 &  57  &  $(-1,0,1,0,0,0)$ & link 93 \\ 
6  & 8 &  61  &  $(0,0,0,0,-1,1)$ &  addition of a 0 to 5.3, articulation    \\ 
6  & 8 &  64  &  $(1,1,0,-2,0,0)$ &  expansion of 5.7    \\ 
6  & 8 &  66  &  $(0,0,1,0,-1,0)$ &  addition of a 0 to 5.3    \\ 
6  & 8 &  69  &  $(0,1,1,-1,-1,0)$ &  link 5.7 and 5.3    \\ 
6  & 8 &  71  &  $(0,0,0,1,-1,0)$ & addition of a 0 to 5.3 \\ 
6  & 8 &  73  &  $(1,0,0,0,0,-1)$ &  addition of a 0 to 5.3    \\ 
6  & 8 &  73  &  $(0,0,0,-1,0,1)$ & addition of a 0 to 5.3     \\ 
6  & 8 &  73  &  $(1,0,1,-1,0,-1)$ &  soldering two 5.7    \\ 
6  & 8 &  74  &  $(0,-1,0,1,0,0)$ &  link and articulation 5.7    \\ \hline
6  & 7 &  75  &  $(0,-1,0,1,0,0)$ &  link 101 \\ 
6  & 7 &  76  &  $(0,0,-1,0,1,0)$ &  link 103 \\ 
6  & 7 &  81  &  $(0,-1,0,1,0,0)$ &  link and articulation 5.7    \\ 
6  & 7 &  82  &  $(1,0,-1,0,-1,1)$ & link 104 \\ 
6  & 7 &  88  &  $(0,-1,0,1,0,0)$ &  articulation on 5.7    \\ 
6  & 7 & 90  &   $(1,0,0,-1,0,0)$ &  articulation on 5.7    \\ 
6  & 7 &  90  &  $(1,-1,0,0,0,0)$ &   articulation on 5.7   \\ 
6  & 7 &  91  &  $(1,0,-1,0,0,0)$ &  addition of a 0 to 5.3    \\ 
6  & 7 &  92  &  $(-1,1,1,1,-1,-1)$ & link on 5.1     \\ 
6  & 7 &  93  &  $(0,0,0,1,0,-1)$ & addition of a 0 to 5.3, articulation     \\ 
6  & 7 &  93  &  $(1,-1,-1,0,1,0)$ &      \\ \hline
6  & 6 &  101 &  $(0,1,0,-1,0,0)$ &  addition of a 0 to 5.3, articulation \\ 
6  & 6 &  103 &  $(0,0,1,-1,0,0)$ & addition of a 0 to 5.3, articulation \\ 
6  & 6 &  104 &  $(0,1,0,-1,0,0)$ &  addition of a 0 to 5.3, articulation \\ 
6  & 6 &  104 &  $(1,0,-1,0,-1,1)$ & articulation on 5.7     \\ \hline
\end{tabular}
\caption{\small\em Six node graphs with soft nodes and eigenvalue $2$.}
\label{tab3b}
\end{table}

\subsection{3s}

\begin{table} [H]
\centering
\begin{tabular}{|c|c|c|c|c|}
\hline
nodes & links & classification \cite{crs01}  & eigenvector &  connection\\
      &       &                              &       & \\  \hline
3     & 3     & 2                            & $(-1,1,0)$  &  \\ \hline
4  & 4  & 6   &        $(-1,1,0,0)$     & articulation 5.3\\ \hline
5  & 7  & 11   &        $(0,-1,0,0,1)$  & articulation 5.3\\ \hline
5  & 6  & 13   &        $(-1,0,0,-1,0)$ & articulation 5.3 \\ \hline
5  & 8  & 13   &        $(0,1,0,0,-1)$  & articulation 5.3\\ \hline
5  & 7  & 16   &        $(-1,1,0,-1,1)$ & addition of \\ 
   &    &      &                        & zero to chain 4  \\ \hline
5  & 7  &  17   &      $(0,1, 0, -1,0)$ & articulation 5.3 \\ \hline
5  & 6  & 20   &        $(-1,1,0,0,0)$  & articulation 5.3 \\ \hline
5  & 6  & 20   &        $(0,0,0,-1,1)$  & articulation 5.3 \\ \hline
5  & 6  &  22   &      $(0, 0, -1,0,1)$ & articulation 5.3 \\ \hline
5  & 5  & 23   &        $(-1,1,0,0,0)$  & articulation 5.3\\ \hline
5  & 5  & 25   &        $(-1,1,0,0,0)$  & articulation 5.3 \\ \hline
\end{tabular}
\caption{\small\em Five node graphs with soft nodes and eigenvalue $3$.}
\label{tab5}
\end{table}

\begin{table} [H]
\centering
\begin{tabular}{|c|c|c|c|c|}
\hline
nodes & links & classification \cite{crs01}  & eigenvector & connection\\
      &       &                              &             & \\  \hline
6  & 13 & 3    &   $(-1,0,2,0,0,-1)$  & link 8 \\ \hline
6  & 12 & 6    &   $(0,-1,0,1,0,0)$   & link 11\\ \hline
6  & 12 & 6    &   $(0,-1,0,-1,0,2)$  & link 8 \\ 
6  & 12 & 8    &   $(0,-2,0,0,1,1)$   & link 11 \\ \hline
6  & 11 & 11   &   $(1,0,-1,0,0,0)$   & link 16 \\ 
6  & 11 & 16   &   $(0,1,0,-1,0,0)$   & link 25\\ 
6  & 11 & 16   &   $(0,-1,0,-1,0,2)$  & link 17 \\ 
6  & 11 & 17   &   $(0,2,0,-1,-1,0)$  & link 52\\ 
6  & 11 & 17   &   $(1,0,-2,0,0,1)$   & link 52\\ \hline
6  & 10 & 19   &   $(1,0,0,-1,0,0)$   & link 39 \\ 
6  & 10 & 25   &   $(0,0,0,-1,0,1)$   & \\ 
6  & 10 & 29   &   $(1,0,-1,0,0,0)$   & \\ 
6  & 10 & 32   &   $(0,1,0,-1,0,0)$   & link 52  \\ 
6  & 10 & 32   &   $(1,0,-1,0,0,0)$   & link 52  \\ \hline
6  & 9  & 36   &   $(0,0,0,0,1,-1)$   & articulation 5.2\\ 
6  & 9  & 38   &   $(1,0,-1,0,0,0)$   & articulation 5.13\\ 
6  & 9  & 38   &   $(0,1,0,-1,0,0)$   & link 63 \\ 
6  & 9  & 39   &   $(1,0,0,-1,0,0)$   & link 63  \\ 
6  & 9  & 51   &   $(1,1,0,-1,-1,0)$  & link 106 \\ 
6  & 9  & 51   &   $(0,0,1,-1,1,-1)$  & link 106 \\ 
6  & 9  & 52   &   $(0,1,0,1,0,-2)$   &   \\ 
6  & 9  & 52   &   $(1,0,1,0,-2,0)$   &    \\ 
6  & 9  & 52   &   $(1,0,-1,0,0,0)$   & link 106 \\ 
6  & 9  & 52   &   $(0,1,0,-1,0,0)$   & link 106 \\ \hline
6  & 8  & 58   &   $(-1,1,1,-1,0,0)$  & link 79   \\ 
6  & 8  & 61   &   $(0,-1,1,0,0,0)$   & articulation 5.2 \\ 
6  & 8  & 62   &   $(0,0,0,0,-1,1)$   & articulation 5.2 \\ 
6  & 8  & 63   &   $(0,1,0,-1,0,0)$   & link 91 \\ 
6  & 8  & 70   &   $(0,-1,1,1,-1,0)$  & link 106\\ 
6  & 8  & 70   &   $(-1,0,1,1,0,-1)$  & link 106\\ 
6  & 8  & 74   &   $(-1,0,1,-1,0,1)$  & link 106\\ 
6  & 8  & 74   &   $(0,0,0,-1,0,1)$   & link 106\\ 
6  & 8  & 74   &   $(-1,0,1,0,0,0)$   & link 106\\ \hline
6  & 7  & 79   &   $(-1,1,0,0,0,0)$  & articulation 5.2 \\ 
6  & 7  & 79   &   $(0,0,0,-1,1,0)$  & articulation 5.2 \\ 
6  & 7  & 83   &   $(-1,1,0,0,0,0)$   & articulation 5.2 \\ 
6  & 7  & 84   &   $(-1,0,1,0,0,0)$   & articulation 5.2 \\ 
6  & 7  & 84   &   $(0,0,0,-1,0,1)$   & articulation 5.2 \\ 
6  & 7  & 88   &   $(0,0,0,0,-1,1)$   & articulation 5.2\\ 
6  & 7  & 91   &   $(0,-1,0,1,0,0)$   &  \\ 
6  & 7  & 92   &   $(1,-1,0,1,-1,0)$  & link 106 \\ 
6  & 7  & 92   &   $(0,1,-1,0,1,-1)$  & link 106 \\ \hline
6  & 6  & 94   &   $(0,0,0,0,1,-1)$   & articulation 5.2\\ 
6  & 6  & 97   &   $(0,0,0,0,1,-1)$   & articulation 5.2\\ 
6  & 6  & 99   &   $(1,-2,1,2,-2,0)$  & soldering P3 and C3 \\ 
6  & 6  & 99   &   $(0,0,0,0,1,-1)$   & articulation 5.2\\ 
6  & 6  & 100  &   $(1,-2,1,1,-1,0)$  & soldering P3 and C3 \\ 
6  & 6  & 100  &   $(0,0,0,0,1,-1)$   & articulation 5.2\\ 
6  & 6  & 106  &   $(-1,0,1,-1,0,1)$  & cycle 6  \\ 
6  & 6  & 106  &   $(-1,1,0,-1,1,0)$  & cycle 6  \\ \hline
\end{tabular}
\caption{\small\em Six node graphs with soft nodes and eigenvalue $3$.}
\label{tab8}
\end{table}

\subsection{4s}

\begin{table} [H]
\centering
\begin{tabular}{|c|c|c|c|c|}
\hline
nodes & links & classification \cite{crs01}  & eigenvector &  connection \\
      &       &                              &             &   \\  \hline
4     & 5     & 5              &  $(1,-2,1,0)$ &     \\ \hline
5     & 8     & 12             &  $(1,0,0,0,-1)$  &  link on 17   \\ 
5     & 7     & 14             &  $(0,1,0,-1,0)$  &     \\ 
5     & 7     & 14             &  $(-1,0,0,1,0)$  & link on 19  \\ 
5     & 7     & 17             &  $(-1,0,0,0,1)$  &     \\ 
5     & 7     & 18             &  $(-2,1,0,1,0)$  & articulation on 5    \\ 
5     & 7     & 19             &  $(0,1,0,-1,0)$  &     \\  \hline
\end{tabular}
\caption{\small\em Five node graphs with soft nodes and eigenvalue $4$.}
\label{tab6b}
\end{table}

\begin{table} [H]
\centering
\begin{tabular}{|c|c|c|c|c|}
\hline
nodes & links & classification \cite{crs01}  & eigenvector &  connection\\
      &       &                              &             & \\  \hline
6  & 10  & 9   &    $(0,0,-1,1,0,0)$ &    \\ 
6  & 10  & 9   &    $(0,0,-1,1,0,0)$ &    \\ 
6  & 10  & 9   &    $(0,0,-1,1,0,0)$ &    \\ 
6  & 10  & 13   &    $(0,0,-1,1,0,0)$ &    \\ 
6  & 10  & 13   &    $(-1,0,0,0,0,1)$ &    \\ 
6  & 10  & 14   &    $(0,0,-1,0,1,0)$ &    \\ 
6  & 10  & 16   &    $(-1,0,1,0,0,0)$ &    \\ 
6  & 10  & 18   &   $(-1,0,-1,1,0,1)$  & link 6.31   \\ 
6  & 10  & 18   &   $(1,0,0,0,0,-1)$  & link 6.31   \\ 
6  & 10  & 21   &   $(1,0,0,0,0,-1)$  & link 6.31   \\ 
6  & 10  & 24   &   $(1,0,0,0,0,-1)$  & link 6.31   \\ 
6  & 10  & 29   &   $(0,0,0,1,0,-1)$ & link 6.41    \\ \hline
6  & 9  & 31   &   $(-1,0,-1,1,0,1)$  & link 6.31   \\ 
6  & 9  & 31   &   $(0,-0.6586, -0.2574,0.2574,0.6586,0)$  &    \\ 
6  & 9  & 31   &   $(1,0,0,0,0,-1)$  &    \\ 
6  & 9  & 33   &   $(0,0,-1,1,0,0)$  &    \\ 
6  & 9  & 35   &   $(0,-1,1,0,0,0)$  &    \\ 
6  & 9  & 36   &   $(0,-1,1,0,0,0)$  &  link 6.53  \\ 
6  & 9  & 36   &   $(0,1,-1,0,0,0)$  &  link 6.53  \\ 
6  & 9  & 41   &   $(0,0,0,1,0,-1)$ & link 6.48    \\ 
6  & 9  & 48   &   $(1,-1,1,0,-1,0)$ & link 6.93    \\ 
6  & 9  & 48   &   $(0,0,0,1,0,-1)$ & link 6.49    \\ 
6  & 9  & 49   &   $(0,-1,0,0,1,0)$  &  link 6.78  \\ 
6  & 9  & 49   &   $(-1,0,0,0,0,-1)$  &  link 6.78  \\ \hline
6  & 8  & 53   &   $(-1,1,0,0,0,0)$  &  link 6.78  \\ 
6  & 8  & 53   &   $(0,1,-1,0,0,0)$  &  link 6.78  \\ 
6  & 8  & 53   &   $(0,-1,1,0,0,0)$  &  articulation 5.5 \\ 
6  & 8  & 55   &   $(-1,0,1,0,0,0)$  &  articulation 5.5 \\ 
6  & 8  & 55   &   $(0,-1,1,0,0,0)$  &  articulation 5.5 \\ 
6  & 8  & 61   &   $(0,0,0-2,1,1)$  &  \\ 
6  & 8  & 62   &   $(-1,1,0,0,0,0)$  & link 6.78 \\ 
6  & 8  & 64   &   $(-1,1,0,0,0,0)$  & articulation 5.17  \\ 
6  & 8  & 65   &   $(0,-1,1,0,0,0)$  &            \\ 
6  & 8  & 69   &   1 arbitrary zero  & link 6.93    \\ 
6  & 8  & 71   &   $(1,-1,1,0,-1,0)$ & link 6.93    \\ 
6  & 8  & 73   &   $(0,1,0,0,-1,0)$  &  soldering 5.7      \\ \hline
6  & 7  & 75   &   $(-2,1,0,1,0,0)$  &  articulation 5.18    \\ 
6  & 7  & 78   &   $(0,-1,0,1,0,0)$  &  articulation 5.5    \\ 
6  & 7  & 80   &   $(0,0,-1,0,1,0)$  &  articulation 5.5    \\ 
6  & 7  & 81   &   $(-2,1,0,1,0,0)$  &  articulation 5.18    \\ 
6  & 7  & 82   &   $(0,-1,0,1,0,0)$  &  articulation 5.19    \\ 
6  & 7  & 93   &   $(1,-1,1,0,-1,0)$  &     \\  \hline
\end{tabular}
\caption{\small\em Six node graphs with soft nodes and eigenvalue $4$.}
\label{tab6a}
\end{table}

\subsection{5s}

\begin{table} [H]
\centering
\begin{tabular}{|c|c|c|c|c|}
\hline
nodes & links & classification \cite{crs01}  & eigenvector &  connection \\
      &       &                              &              &  \\  \hline
5  & 10  & 10 &        $(1,-1,0,0,0)$   &          \\ 
5  & 10  & 10 &        $(0,1,-1,0,0)$   &          \\ 
5  & 10  & 10 &        $(0,0,1,-1,0)$   &          \\ 
5  & 10  & 10 &        $(0,0,0,1,-1)$   & link on 5.11 \\ \hline
5  & 9  & 11  &        $ (1,0,-1,0,0)$  & link on 5.12  \\ 
5  & 9  & 11  &        $ (0,0,1,-1,0)$  &          \\ \hline
5  & 8  & 12  &        $(0,1,0,-1,0)$   & link 5.15  \\ \hline
5  & 8  & 13  &        $(1,0,-2,1,0)$   & add 2 soft nodes to 5.3 \\ \hline
5  & 7  & 15  &        $(1,1,-3,1,0)$   & add soft node to 4 star \\ \hline
5  & 7  & 15  &        $(0,0,-1,1,0)$   & add 3 soft nodes to 5.1 \\ \hline
\end{tabular}
\caption{\small\em Five node graphs with soft nodes and eigenvalue $5$.}
\label{tab7}
\end{table}

\begin{table} [H]
\centering
\begin{tabular}{|c|c|c|c|c|}
\hline
nodes & links & classification \cite{crs01}  & eigenvector &  connection \\
      &       &                              &              &  \\  \hline
6  & 13 & 3    &   $(1,0,0,0,0,-1)$  &          \\ \hline
6  & 12 & 5    &   $(-1,1,0,0,0,0)$ & link 6.12 \\ 
6  & 12 & 5    &   $(1,1,0,0,0,-2)$ &  link 6.14        \\ \hline
6  & 12 & 8    &   $(0,0,0,0,1,-1)$  &  link 6.12        \\ 
6  & 12 & 8    &   $(1,0,0,-1,0,0)$  &          \\ \hline
6  & 11 & 10   &   $(1,-1,0,0,0,0)$  &  link 6.19  \\ 
6  & 11 & 10   &   $(-1,1,-1,1,0,0)$ & link 6.19 \\ 
6  & 11 & 10   &   $(1,-1,-1,1,0,0)$ &  \\ 
6  & 11 & 11   &   $(1,1,0,0,0,-2)$  &  link 6.14    \\ 
6  & 11 & 12   &   $(1,-1,0,0,0,0)$  &  link 6.14  \\ 
6  & 11 & 14   &   $(1,1,0,0,0,-2)$  &  link 6.29     \\ 
6  & 11 & 14   &   $(1,-1,0,0,0,0)$  &          \\ 
6  & 11 & 17   &   $(-1,0,0,1,-1,1)$  & inserting a matching between two 5.2 \\ 
6  & 11 & 17   &   $(0,0,0,1,-1,0)$  & add 3 soft nodes to 5.1 \\ \hline
6  & 10 & 19   &   $(-1,1,1,-1,0,0)$ & link on 6.30 \\      
6  & 10 & 19   &   $(0,1,-1,0,0,0)$  & link on 6.19 \\
6  & 10 & 20   &   $(1,0,-2,1,0,0)$  & link on 6.33  \\ 
6  & 10 & 20   &   $(0,0,-1,1,0,0)$  &  link on 6.35\\ 
6  & 10 & 23   &   $(0,1,0,0,-1,0)$  & link on 6.32 \\ 
6  & 10 & 23   &   $(-1,0,-1,1,0,1)$ & link on 6.30 \\ 
6  & 10 & 26   &   $(0,0,0,1,0,-1)$  & link on 6.32         \\ 
6  & 10 & 29   &   $(1,-2,1,0,0,0)$  & link on 6.32 \\ 
6  & 10 & 30   &   $(1,-1,0,1,-1,0)$ & link on 6.70         \\ 
6  & 10 & 32   &   $(0,1,0,0,0,-1)$  & link on 6.32 \\ \hline
6  & 9 & 33   &   $(1,-3,1,1,0,0)$  & link on 6.56\\
6  & 9 & 34   &   $(0,1,0,-1,0,0)$  & link on 6.45 \\
6  & 9 & 35   &   $(1,0,0,-1,0,0)$  & link on 6.45 \\ 
6  & 9 & 38   &   $(1,-1,1,-1,0,0)$ & articulation on 5.13 \\ 
6  & 9 & 39   &   $(1,-2,0,1,0,0)$  & articulation on 5.13  \\ 
6  & 9 & 44   &   $(-1,0,1,-1,0,1)$  &  link on 6.70 \\ 
6  & 9 & 45   &   $(0,0,1,-1,0,0)$   &  link on 6.57 \\ \hline
6  & 9 & 51   &   $(0,0,1,-1,1,-1)$   &  link on 6.70 \\ \hline
6  & 8 & 56   &   $(1,1,-3,1,0,0)$   &  articulation on 5.15 \\ 
6  & 8 & 57   &   $(0,-1,0,1,0,0)$   &  articulation on 5.15 \\ 
6  & 8 & 70   &   $(-1,0,1,-1,0,1)$  &  insert matching on cycle 3 \\ \hline
\end{tabular}
\caption{\small\em Six node graphs with soft nodes and eigenvalue $5$.}
\label{tab7a}
\end{table}

\subsection{6s}

\begin{table} [H]
\centering
\begin{tabular}{|c|c|c|c|c|}
\hline
nodes & links & classification \cite{crs01}  & eigenvector &  connection \\
      &       &                              &              &  \\  \hline
6  & 13 & 1    &   $(1,-1,0,0,0,0)$  &          \\ 
6  & 13 & 1    &   $(1,0,-1,0,0,0)$  &          \\ 
6  & 13 & 1    &   $(1,0,0,-1,0,0)$  &          \\ 
6  & 13 & 1    &   $(1,1,-2,0,0,0)$  & link 6.3         \\ 
6  & 13 & 2    &   $(1,-1,0,0,0,0)$  &          \\ 
6  & 13 & 2    &   $(1,0,-1,0,0,0)$  &          \\ 
6  & 13 & 2    &   $(1,0,0,-1,0,0)$  &          \\ 
6  & 13 & 3    &   $(1,-2,0,1,0,0)$  & link 6.4         \\ \hline
6  & 12 & 3    &   $(0,1,0,0,-1,0)$ & link 6.4 \\ 
6  & 12 & 4    &   $(1,-2,0,1,0,0)$  &          \\ 
6  & 12 & 4    &   $(0,1,0,0,-1,0)$ & link 6.6 \\ 
6  & 12 & 5    &   $(0,0,1,0,-1,0)$ & link 6.7 \\ 
6  & 12 & 6    &   $(1,1,0,0,0,-2)$ &  link 6.16        \\ 
6  & 12 & 6    &   $(1,0,-1,0,0,0)$ &  link 6.7        \\ 
6  & 12 & 7   &   $(0,1,0,0,-1,0)$  &  add four soft nodes to 5.1        \\ 
6  & 12 & 8   &   1 arbitrarily placed 0  &          \\ \hline
6  & 11 & 9   &   $(1,1,0,-1,1,0)$ &  add two soft nodes to 5.7\\ 
6  & 11 & 11   &   $(1,1,1,-4,1,0)$  &  add a soft node to 5.28 \\ 
6  & 11 & 13   &   1 arbitrarily placed 0  &          \\ 
6  & 11 & 16   &   $(1,0,1,0,-2,0)$  & add 3 soft nodes to 5.2 \\ \hline
6  & 9 & 37   &   $(1,0,0,-1,0,0)$ & add 4 soft node to 5.1  \\ \hline
\end{tabular}
\caption{\small\em Six node graphs with soft nodes and eigenvalue $6$.}
\label{tab8a}
\end{table}

\end{document}